\newtheorem{theorem}{Theorem}[section]
\newtheorem{lemma}[theorem]{Lemma}
\newtheorem{proposition}[theorem]{Proposition}
\newtheorem{definition}[theorem]{Definition}
\newcommand{\qed}{\hfill \ensuremath{\Box}}
\newenvironment{proof}[1][Proof]{\begin{trivlist}
\item[\hskip \labelsep {\bfseries #1}]}{\end{trivlist}}
\def\rs{\widehat{\mathbb{C}}}
\def\phibar{\overline{\phi}}
\def\psibar{\overline{\psi}}
\def\badset{\mathcal{B}_{\phi}}
\def\extrabadset{\mathcal{B}_{\overline{\phi}}}
\def\Qbar{\overline{\mathbb{Q}}}
\begin{document}

\title{Boundary Values of the Thurston Pullback Map}
\author{Russell Lodge\\
  Jacobs University\\
  Bremen, Germany\\
  \texttt{r.lodge@jacobs-university.de}}
\date{\today}
\maketitle

\begin{abstract}
For any Thurston map with exactly four postcritical points, we present an algorithm to compute the Weil-Petersson boundary values of the corresponding Thurston pullback map.  This procedure is carried out for the Thurston map $f(z)=\frac{3z^2}{2z^3+1}$ originally studied by Buff, et al.  The dynamics of this boundary map are investigated and used to solve the analogue of Hubbard's Twisted Rabbit problem for $f$.
\end{abstract}


\tableofcontents
\section{Introduction}

In the early 1980's, Douady and Hubbard highlighted a close relationship between combinatorics and complex dynamics when they used trees to model the Julia set of postcritically finite polynomials \cite{BKS}.  Much is known about the relationship between combinatorics and Julia sets for complex polynomials; however, non-polynomial rational functions are another matter.

\subsection{Thurston's Theorem}

To produce combinatorial models of postcritically finite rational functions, it is common to define a topological model of a function with the desired combinatorics and then determine if it is actually equivalent to a rational function.  Thus, one defines a Thurston map to be a postcritically finite orientation-preserving branched cover from the two-sphere to itself.  A number of frequently used methods exist to produce these topological examples of Thurston maps, and the following two make appearances in this paper.  Finite subdivision rules of the two-sphere have given a wealth of easily described examples that have been studied extensively in the recent past \cite{CFP}.  Another way to produce examples of Thurston maps is by mating two postcritically finite polynomials of the same degree, yielding a Thurston map of equal degree \cite{Me11,Mi00}.

It is then natural to ask if these Thurston maps actually model rational functions up to an appropriate notion of combinatorial equivalence; Thurston's theorem characterizes which maps are equivalent to rational functions (Thm \ref{thm:Thurston}).  We focus here on the generic case of a Thurston map that is not equivalent to one of the well-understood Latt\'es examples.  A Thurston map is called obstructed if it is not equivalent to a rational function.  These obstructed maps are characterized by a special family of simple closed curves in the two-sphere that behave a certain way under preimage.  Furthermore, the theorem establishes a rigidity result---if the Thurston map is equivalent to a rational function, this rational function is essentially unique.  The proof of Thurston's theorem is given in \cite{DH93} using iteration of an analytic map on Teichm\"uller space called Thurston's pullback map.  It is shown that the pullback map corresponding to a Thurston map has a fixed point if and only if the Thurston map is equivalent to a rational map.  It is worth noting that the mapping properties of simple closed curves play an important role in the three other groundbreaking theorems proved by Thurston relating geometry and topology, as noted by Hubbard \cite{H06}.

\subsection{Boundary Values of the Pullback Map}

Thurston's pullback map is complicated---in all known nontrivial examples it is infinite-to-one, and recent years have seen a number of papers devoted to the study of its properties \cite{BEKP,KPS,Sel}.  Selinger showed that the map extends to the Weil-Peterson boundary, but few explicit computations of the boundary values have been made with the notable exception of one class of maps.  A Thurston map is called \textit{nearly Euclidean} if all critical points have local degree two and the postcritical set has exactly four points; an algorithm exists to compute the boundary values of the pullback map \cite{CFPP2}.  Using different methods, we present an algorithm that computes the boundary values for any Thurston map with four postcritical points.  The tradeoff for this greater generality is that a virtual endomorphism must be computed, which can be challenging.

Let $F$ be a Thurston map with postcritical set $P_F$ containing exactly four points.  A simple closed curve $\gamma\subset\widehat{\mathbb{C}}\setminus P_F$ is called \textit{essential} if both components bounded by $\gamma$ contain exactly two post-critical points.  The collection of homotopy classes of essential curves in $\widehat{\mathbb{C}}\setminus P_F$ can be put in bijective correspondence with the extended rational numbers $\Qbar:=\mathbb{Q}\cup\{\frac{1}{0}\}$.  Furthermore, it is a standard fact that Teichm\"uller space can be identified with the upper half-plane in such a way that the Weil-Petersson completion is obtained by adding the rational points $\frac{p}{q}+0i$ along with the point $\frac{1}{0}$ to the boundary of the upper half-plane where each point corresponds to collapsing the sphere with four marked points along the curve corresponding to $\frac{p}{q}$.

Selinger proved that the extended Thurston pullback map sends the point $\frac{p}{q}$ to the point $\frac{p'}{q'}$ if the preimage under $F$ of the curve corresponding to $\frac{p}{q}$ has an essential component in the homotopy class of the curve corresponding to $\frac{p'}{q'}$ (if there is no essential component, the point maps to the interior of Teichm\"uller space) \cite{Sel}.  Thus, to understand the boundary values of the pullback map under the Weil-Petersson completion, one can simply understand the essential preimages of curves under $f$ and vice versa.  In this paper, these curve preimages are computed using the algebraic machinery of virtual endomorphisms drawn from \cite{BN06}.  This kind of curve preimage computation has already been made for some quadratic polynomials \cite{Pil10}.  When the boundary $\overline{\mathbb{Q}}$ maps to itself, the function $\sigma_F:\overline{\mathbb{Q}}\longrightarrow\overline{\mathbb{Q}}$ is defined by $\sigma_F(\frac{p}{q})=\frac{p'}{q'}$.  

A number of choices must be made to carry out these computations, and one goal of this paper is to present a standard method for making these choices.  To this end, explicit bijections are established between free homotopy classes of unoriented essential curves in the dynamical plane, elements of $\overline{\mathbb{Q}}$, even continued fraction expansions of such elements, elements of the Weil-Petersson boundary, primitive ``right-hand" parabolic elements of $P\Gamma(2)$, maximal parabolic subgroups of $P\Gamma(2)$, primitive ``right-hand" peripheral elements in the fundamental group of moduli space, and right-hand Dehn twists.  These bijections are all natural, and embody the ``fivefold way" for general Thurston maps \cite[Thm 2.1]{KPS}.

\subsection{Boundary Values for an Example}

For the map $f(z)=\frac{3z^2}{2z^3+1}$ which has four postcritical points, it will be shown that $\sigma_f:\Qbar\rightarrow\Qbar$ is computed by transforming the even continued fraction expansions of the rational numbers in a prescribed way.  This particular $f$ was studied in \cite{BEKP}, and one finds there an image of the pullback map for $f$ generated by X. Buff.  It was also shown that the Thurston pullback map associated to $f$ is surjective, and to date this is essentially the only such example up to some nondynamical equivalence.  The following result about the dynamics of $\sigma_f$ will be proven in Section \ref{SigmaProps}.
\bigskip

\noindent\textbf{Theorem} Let $\frac{p}{q}$ be a reduced fraction.  Then under iteration of $\sigma_f$, $\frac{p}{q}$ lands either on the two-cycle $\frac{0}{1}\leftrightarrow\frac{1}{0}$ or on the fixed point $-\frac{1}{1}$.  More precisely, $\frac{p}{q}$ lands on $-\frac{1}{1}$ if and only if $p$ and $q$ are odd. 

\bigskip

Thus $\sigma_f$ has a finite global attractor, meaning that every point $\frac{p}{q}\in \overline{\mathbb{Q}}$ lands in some finite set under iteration.  It will also be proven that $\sigma_f$ is surjective and infinite-to-one at every point.

\subsection{The Twisting Problem}
 In addition to being useful in their own right, these explicit boundary computations provide a useful invariant for the notion of equivalence used in Thurston's theorem (Defn \ref{ThurstonEquivalence}).  Thurston equivalence remains mysterious, and many natural problems related to it are quite difficult to solve.  For example, the ``Twisted Rabbit problem" is to determine whether the rabbit polynomial composed with an arbitrary Dehn twist is equivalent to the rabbit, corabbit, or airplane polynomial.   Pilgrim described the lack of solution to the Twisted Rabbit Problem for over a decade a ``humbling reminder" of the lack of suitable invariants \cite{Pil10}.  The solution came in the work of Bartholdi and Nekrashevych which introduced the permutational biset and the iterated monodromy group as invariants of Thurston equivalence for quadratic polynomials \cite{BN06}.  Their machinery effectively reduced the topological question of determining Thurston class to the algebraic question of determining nuclei of iterated monodromy groups.  Though these methods work well in the setting of quadratic polynomials, it is unclear how to generalize them to the case of rational functions of higher degree.

The solution presented here to the twisting problem for $f(z)=\frac{3z^2}{2z^3+1}$ uses the dynamics of the boundary values of the pullback map as an invariant for Thurston equivalence.  The pure mapping class group of $\rs$ with marked points $P_f$ is the set of homeomorphisms that fix $P_f$ pointwise modulo isotopy fixing $P_f$, and is denoted by $\text{PMCG}(\widehat{\mathbb{C}},P_f)$.  Since $|P_f|=4$, this group has two generators $\alpha$ and $\beta$ which are chosen explicitly.  In the spirit of \cite{BN06}, the virtual endomorphism is extended to a function $\overline{\psi}:\text{PMCG}(\widehat{\mathbb{C}},P_f)\longrightarrow
\text{PMCG}(\widehat{\mathbb{C}},P_f)$ so that $g\circ f$ is Thurston equivalent to $\overline{\psi}(g) \circ f$.  The following theorem from \cite{Lod} describes the dynamics of $\overline{\psi}$.

\bigskip

\noindent\textbf{Theorem:} For any $g\in\text{PMCG}(\widehat{\mathbb{C}},P_f)$ there is a positive number $N$ so that $\overline{\psi}^{\circ n}(g)\in \mathfrak{M}$ for all $n>N$ where
\begin{center}{$\mathfrak{M}=\{e,\beta,\alpha^{-1},\alpha^2\beta^{-1},
\alpha^{-1}\beta\alpha^{-1},\alpha\beta^{-1},\beta^2\}\cup\{\alpha(\beta\alpha)^k:k\in\mathbb{Z}\}$}
\end{center}

Thus the twisting problem reduces to identifying the Thurston class of each element of $\mathfrak{M}$ applied to $f$.  It is shown that composing each element of $\{\alpha(\beta\alpha)^k:k\in\mathbb{Z}\}$ with $f$ produces a one-parameter family of obstructed and pairwise inequivalent maps.  If on the other hand $h\in\{e,\beta,\alpha^{-1},\alpha^2\beta^{-1},
\alpha^{-1}\beta\alpha^{-1},\alpha\beta^{-1},\beta^2\}$, the Thurston class of $h\circ f$ is not immediately clear.   It is possible to show that $h\circ f$ is unobstructed using wreath recursions and that it must be equivalent to $f$ itself or to a second fixed rational function $g$.  However, repeated attempts with known methods were unable to determine whether it was equivalent to $f$ or $g$.  The needed invariant was found, though, when the pullback on curves for $g$  was shown to have two distinct two-cycles.  Thus, if the pullback of $h\circ f$ on curves has two two-cycles, it cannot be equivalent to $f$ because the finite global attractor only has one two-cycle.  This fact along with the fact that $g$ is not a mating allows for a complete solution to the twisting problem for $f$, and it brings to light a valuable invariant of Thurston equivalence.

\subsection{Outline}

Section \ref{sec:npoints} introduces Thurston theory for the sphere with a finite number of marked points.  Correspondences on moduli space are discussed, and virtual endomorphisms are defined on the pure mapping class group of the dynamical plane and the fundamental group of moduli space as in \cite{BN06}.  The Reidemeister-Schreier algorithm is used to find generators for the domain of the latter virtual endomorphism.  Also, given any element of this domain of definition in terms of the generators of the fundamental group, the algorithm rewrites this element in terms of the generators of the domain.  This facilitates the virtual endomorphism computations made in later sections.  Iterated monodromy groups and wreath recursions are discussed, and a summary is given of known results about the pullback on curves and solutions to twisted polynomial problems.

Section \ref{sec:FourPostcriticalPoints} specializes to the case of four marked points.  An explicit procedure is described to identify maximal parabolic subgroups of P$\Gamma(2)$, maximal parabolic subgroups of the fundamental group of the thrice-punctured sphere, points in the Weil-Petersson boundary, essential curves in the sphere with four punctures,  primitive right Dehn twists, and the extended rational numbers $\overline{\mathbb{Q}}$.  Section \ref{sec:slope} defines the notion of slope for essential curves in the four-punctured sphere by lifting to the torus double cover.  This method of assigning extended rational numbers to essential curves follows \cite{CFPP2} and is essentially different from the method in the previous section.

Section \ref{fProperties} presents a study of the postcritically finite rational function $f(z)=\frac{3z^2}{2z^3+1}$.  A convenient combinatorial model for this $f$ is presented, and the virtual endomorphisms on the dynamical plane and moduli space are computed.  Section \ref{SigmaFGA} proves that the boundary values of the Thurston pullback map have a finite global attractor.  Section \ref{SigmaProps} presents further properties of this boundary map.  

Section \ref{sec:Twist} closes the study of $f$ with an outline of the solution to the twisting problem.  The pullback on curves is effectively used as an invariant for Thurston maps.

The following two paragraphs are fine, but a little off topic.

Bartholdi and Nekrashevych partially verify their twisted rabbit results using iterative properties of the moduli space map.  Their procedure is adapted here to correspondences on moduli space.  Suppose $f$ is a Thurston map and the commutative diagram in Section \ref{subsec:DefinitionsEndomorphisms} is produced.  The following determines the combinatorial class of $h\cdot f:=f\circ h$ by describing how to project the forward orbit of $\circledast\in\mathscr{T}_f$ under $\sigma_{h\cdot f}$ to moduli space  in an easily implemented way.  It is known that $\sigma_{h\cdot f}(\tau)=h\cdot\sigma_f(\tau)$ for $\tau\in\mathscr{T}_f$.  Thus $\sigma_{h\cdot f}(\circledast)$ is represented by the path $\gamma_h$ (corresponding to $h$) in $\mathscr{M}_f$ based at $z_0$.  Higher iterates are found inductively.  Let $\ell_h$ be the path representing $\sigma_{h\cdot f}^{\circ n}(\circledast)$: the path representing $\sigma_{h\cdot f}^{\circ n+1}(\circledast)$ is $\gamma_h\cdot X(Y^{-1}(\ell_h)[A(\circledast)])$ where by convention $\gamma_h$ is traversed first in the positive direction.  Thus in theory, one could find where the endpoint of the path converges to as $n$ gets large.  

Having shown how to compute the fate of this endpoint under iteration, the results must now be interpreted.  The orbit of $\circledast \in \mathscr{T}_f$ under iteration of $\sigma_{h\cdot f}$ either converges to some point in Teichm\"uller space and $h\cdot f$ is equivalent to a rational map, or the orbit of $\circledast$ escapes to the boundary and $h\cdot f$ is obstructed.  Thus, if the path just constructed escapes to the boundary of moduli space, $h\cdot f$ is clearly obstructed.  On the other hand, the correspondence has at least one fixed point corresponding to each possible rational map with the critical portrait of $f$, and if the path converges to one of these points as $n\rightarrow\infty$, then $h\cdot f$ is combinatorially equivalent to the corresponding rational map.  This procedure is exhibited for two examples in Figure 6 of \cite{BN06}.  

This paragraph is OK too:

The map on moduli space for the rabbit polynomial $F$ is used to calculate the image of $\sigma_F$ using the following method.  Let $g$ be the map on moduli space with a fixed basepoint $z_0$ as in \cite{K}.

\centerline{\xymatrix{\mathscr{T}_F \ar[r]^-{\sigma_F} \ar[d]^{\pi}
&\mathscr{T}_F\setminus\{\pi^{-1}(-1)\}\ar[d]^{\pi}\\ \mathscr{M}_F & \mathscr{M}_F\setminus\{-1\}
\ar[l]^-{g}}}
\noindent Paths starting at $z_0$ in $\mathscr{M}_F$ can be lifted by $g$ to paths in $\mathscr{M}_F\setminus\{-1\}$ that start at $z_0$. Since the universal cover is defined to be the space of homotopy classes of paths in $\mathscr{M}_F$ that begin at $z_0$, one can fix an identification of $\mathscr{T}_F$ with $\mathbb{D}$ to generate the image of $\sigma_F:\mathbb{D}\longrightarrow\mathbb{D}$.  Another application of the map on moduli space comes from \cite{BN06} where Bartholdi and Nekrashevych use this map to determine the combinatorial equivalence class of the rabbit twisted by $T$ and $T^{-1}$.  This is done by drawing paths in moduli space that correspond to these twists, and examining longterm behavior of successive lifts of this path.

\end{comment}

\section{Thurston Maps with $n$ Postcritical Points}
\label{sec:npoints}

Though later sections will only require the case of four postcritical points, we present Thurston theory in the more general setting.  

\subsection{Notation, Definitions, and Examples}
\label{sec:notationanddefns}

The standard oriented two-sphere is denoted by $S^2$ and the
Riemann sphere by $\widehat{\mathbb{C}}$. Let $F:S^2\longrightarrow
S^2$ be a degree $d$ branched covering where $C_F$ is the set of critical points and $V_F$ is the set of critical values.  Denote by $\text{deg}(F,x)$ the local degree of $F$ at the point $x\in S^2$.  The \textit{postcritical set} of $F$
is given by $$P_F=\bigcup_{i>0}F^i(C_F).$$ If $|P_F|$ is finite, $F$ is said to be \textit{postcritically
finite}. A \textit{Thurston map} is a postcritically finite orientation-preserving branched cover
$F:S^2\longrightarrow S^2$ where $\text{deg}(F)\geq 2$.  Define an equivalence relation on the set of Thurston maps as follows:

\begin{definition}\label{ThurstonEquivalence}
Let $F$ and $G$ be Thurston maps with postcritical sets $P_F$ and $P_G$ respectively.  Then $F$ is \textit{Thurston equivalent} to $G$ if there are orientation preserving homeomorphisms 
$h_0,h_1:(S^2,P_F)\longrightarrow(S^2,P_G)$ with $h_0$ homotopic to $h_1$ rel $P_F$, so that the following commutes:
\end{definition}

\centerline{ \xymatrix{(S^2,P_F) \ar[r]^{h_1} \ar[d]_F &{(S^2,P_G)} \ar[d]^{G} \\
(S^2,P_F) \ar[r]^{h_0} &{(S^2,P_G)}}} 

A simple closed curve $\gamma$ in $S^2\setminus P_F$ is \textit{essential} if each component of
$S^2\setminus{\gamma}$ intersects $P_F$ in at least two points.  It is called \textit{peripheral} if it bounds a disk containing only a single point of $P_F$.  A \textit{multicurve} is a collection
$\Gamma=\{\gamma_1,...\gamma_k\}$ of disjoint essential, simple closed curves where the elements of the
collection are pairwise non-homotopic. Use $\mathscr{C}_F$ to denote the set of homotopy classes of essential simple closed curves in $S^2\setminus P_F$.  Denote by
$\mathbb{R}[\mathscr{C}_F]$ the free $\mathbb{R}$-module over $\mathscr{C}_F$. The Thurston
linear map $\lambda_F:\mathbb{R}[\mathscr{C}_F]\longrightarrow\mathbb{R}[\mathscr{C}_F]$ is defined by
$$\lambda_F(\gamma)=\sum_{\gamma'}\sum_{\gamma'\simeq\delta\subset{F^{-1}(\gamma)}}
\frac{1}{\text{deg}(F:\delta\rightarrow\gamma)}\cdot \gamma'$$ where $\gamma$ and $\gamma'$ are single
components of multicurves and the outer sum is over all $\gamma'$ homotopic to preimages of $\gamma$.  A \textit{Thurston obstruction} is a nonempty multicurve $\Gamma$ so that $\mathbb{R}[\Gamma]$ is invariant under $\lambda_F$, and the spectral radius of $\lambda_F$ is
greater than or equal to $1$.

We now state Thurston's
theorem, referring to \cite{DH93} for a proof and \cite{Mi06} for a treatment of Latt\`es maps.

\begin{theorem} \label{thm:Thurston}
 Let $F$ be a Thurston map not equivalent to a Latt\`es map. Then F is Thurston equivalent to a rational function if and only if
there are no obstructions. If this rational function exists, it is unique up to M\"obius conjugation.
\end{theorem}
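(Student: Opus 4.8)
\bigskip
\noindent\textit{Proof strategy.} The plan is the one carried out in \cite{DH93}: convert the statement into a fixed-point problem for an analytic self-map of a Teichm\"uller space. First I would set up $\mathscr{T}_F:=\operatorname{Teich}(S^2,P_F)$, the space of complex structures on $S^2$ marking the finitely many points of $P_F$; it is a complex manifold biholomorphic to a bounded domain, of complex dimension $|P_F|-3$, equipped with the Teichm\"uller metric. Given $\tau\in\mathscr{T}_F$ represented by a marking $\phi\colon(S^2,P_F)\to(\widehat{\mathbb{C}},\phi(P_F))$, pull the complex structure back along $F$ (the branch points cause no trouble, since any complex structure can be uniformized near a branch point) and uniformize: this produces a rational map $R_\tau$ and a marking $\psi$ with $\phi\circ F=R_\tau\circ\psi$, and one sets $\sigma_F(\tau):=[\psi]$. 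The first task is to check that $\sigma_F$ is well defined and holomorphic and that $\tau$ is a fixed point of $\sigma_F$ exactly when the complex structure it carries is $F$-invariant, i.e.\ when $F$ becomes a rational map in that structure. This reduces the ``if and only if'' to \emph{$\sigma_F$ has a fixed point if and only if $F$ has no obstruction} and the rigidity clause to \emph{$\sigma_F$ has at most one fixed point}.

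The analytic input is that $\sigma_F$ contracts the Teichm\"uller metric. Identifying cotangent spaces of $\mathscr{T}_F$ with spaces of integrable meromorphic quadratic differentials (at worst simple poles at the marked points), the co-derivative of $\sigma_F$ is, up to these identifications, a pushforward operator $q\mapsto F_*q$, which is non-increasing in the $L^1$-norm and \emph{strictly} decreasing on any $q$ that is not itself a pullback from the image surface; since $\deg F\ge 2$ this forces $\|d\sigma_F(\tau)\|<1$ pointwise, hence a uniform contraction on every compact subset of $\mathscr{T}_F$. For uniqueness I would argue that two distinct fixed points $\tau_1,\tau_2$ would force the Teichm\"uller geodesic joining them to be fixed pointwise by $\sigma_F$ (as $\sigma_F$ does not increase distance and the geodesic is unique), hence its defining quadratic differential $q$ to satisfy $F^*q=(\deg F)\,q$; a zeros-and-poles count on the sphere marked by $P_F$ --- using that $P_F$ is forward $F$-invariant and contains $V_F$ --- shows no such eigen-differential exists, \emph{except} precisely in the Latt\`es case, which is exactly why that case is excluded from the hypothesis. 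This settles the rigidity clause.

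For existence one runs the iteration: pick any $\tau_0$ and set $\tau_n:=\sigma_F^{\circ n}(\tau_0)$. By the uniform contraction on compacta, if some subsequence of $(\tau_n)$ stays in a compact subset of $\mathscr{T}_F$ then $(\tau_n)$ is eventually Cauchy and converges to a fixed point, so the only alternative is that $(\tau_n)$ leaves every compact set, i.e.\ the hyperbolic lengths of certain marked curves shrink to $0$ along the orbit. The crux --- and the step I expect to be the main obstacle --- is extracting a genuine Thurston obstruction from this escape. Here one uses the Collar Lemma and Gr\"otzsch-type inequalities to bound, for an essential curve $\gamma$, the hyperbolic length on $\tau_{n+1}$ of a short component $\delta$ of $F^{-1}(\gamma)$ by a constant times $\frac{1}{\deg(F\colon\delta\to\gamma)}$ times the hyperbolic length of $\gamma$ on $\tau_n$; summing over components produces a bound governed exactly by $\lambda_F$, and the curves that persist in the thin part span a $\lambda_F$-invariant submodule on which $\lambda_F$ has spectral radius $\ge 1$ --- an obstruction. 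Making these length comparisons uniform in $n$ and identifying which curves survive is the delicate part.

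Finally, for the remaining implication --- an obstruction precludes a rational realization --- I would show directly that a rational map $R$ with $V_R\subseteq P_R$ carries no obstruction. The key point is that such an $R$ restricts to an \emph{unbranched} covering $\widehat{\mathbb{C}}\setminus R^{-1}(P_R)\to\widehat{\mathbb{C}}\setminus P_R$ of a strictly larger hyperbolic surface, so a length/modulus comparison forces, for every $\lambda_R$-invariant multicurve $\Gamma$, the Perron--Frobenius eigenvalue of $\lambda_R|_{\mathbb{R}[\Gamma]}$ to be strictly less than $1$. Assembling the pieces: an obstruction makes every $\sigma_F$-orbit escape, hence $\sigma_F$ has no fixed point, hence $F$ is not equivalent to a rational map; absence of an obstruction keeps some orbit in a compact set, hence $\sigma_F$ has a fixed point, hence $F$ is equivalent to a rational map; and uniqueness is the rigidity clause above.
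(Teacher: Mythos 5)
The paper does not include a proof of this theorem; it is stated as background with a pointer to \cite{DH93} (and \cite{Mi06} for Latt\`es maps), so there is no in-paper argument to compare against. Your outline is a faithful summary of the Douady--Hubbard strategy---define $\sigma_F$, reduce to a fixed-point problem, show $\sigma_F$ is non-expanding via the pushforward on integrable quadratic differentials, derive uniqueness from strict contraction plus a count ruling out the Thurston eigendifferential $F^*q=(\deg F)q$ outside the Latt\`es case, extract an obstruction from escape to the boundary via collar/Gr\"otzsch inequalities, and prove directly that a rational map with $V_R\subseteq P_R$ admits no obstruction---so it does match the proof the paper is citing.

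One place the sketch slightly overreaches is the claim that $\deg F\geq 2$ by itself forces $\|d\sigma_F(\tau)\|<1$ pointwise. The coderivative $q\mapsto F_*q$ always has operator norm at most $1$, but equality can in fact occur at particular $\tau$ even when the orbifold is hyperbolic; the estimate actually established in \cite{DH93} is that $\sigma_F$ is non-expanding and that an iterate (or $\sigma_F$ itself under additional hypotheses) is a strict contraction on compacta, with failure of any strict contraction characterizing precisely the Euclidean orbifold/Latt\`es situation that the theorem excludes. So ``$\|d\sigma_F\|<1$ pointwise'' is the lemma you would need to prove rather than an immediate consequence of the degree hypothesis. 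Your downstream logic (uniqueness from contraction, convergence of the orbit on compacta) is fine once that lemma is in place; it is just not free.
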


A crucial tool in the proof of Thurston's theorem is the Thurston pullback map on Teichm\"uller space. It can be shown that a Thurston map is equivalent to a rational function if and only
if its associated pullback map has a fixed point.

\begin{definition} The Teichm\"uller space for a Thurston map $F$ is defined to be
$$\mathscr{T}_F=\{\phi:(S^2,P_F)\longrightarrow \widehat{\mathbb{C}}\}/\sim$$ where $\phi_1\sim\phi_2$ if
and only if there is a M\"obius transformation $M$ so that $\phi_2$ is isotopic to $M\circ\phi_1$ rel $P_F$.
\end{definition}

Define the \textit{moduli space for $F$} to be $$\mathscr{M}_F:=\{\iota:P_F\hookrightarrow
\widehat{\mathbb{C}}\}/\approx$$ where each $\iota$ is an injection and $\iota_1\approx\iota_2$ if there is a M\"obius transformation $M$ so
that $M\circ\iota_1=\iota_2$. There is an obvious projection
$\pi_F:\mathscr{T}_F\longrightarrow\mathscr{M}_F$ defined by $\pi([\phi])=[\phi|_{P_F}]$ and it is known to be a universal cover mapping.   Denote by
$\text{Homeo}(S^2)$ the group of orientation-preserving homeomorphisms from $S^2$ to itself where from now on, by convention, all homeomorphisms are orientation preserving. Denote by
$\text{Homeo}(S^2,P_F)$ the subgroup of $\text{Homeo}(S^2)$ that fixes $P_F$ pointwise, and let
$\text{Homeo}_0(S^2,P_F)$ be the path component of the identity in $\text{Homeo}(S^2,P_F)$. The
\textit{pure mapping class group} of $S^2$ with respect to $P_F$ is
$$\text{PMCG}(S^2,P_F)=\text{Homeo}(S^2,P_F)/\text{Homeo}_0(S^2,P_F)$$ where $\text{Homeo}_0(S^2,P_F)$ acts on the right by post-composition.  The deck group of the universal cover $\pi_F$ is the
pure mapping class group acting by precomposition on representing homeomorphisms of points in $\mathscr{T}_F$.

To define the Thurston pullback map for $F$, choose a representative $\phi$ of
$\tau\in\mathscr{T}_F$. Pull back the complex structure on $\widehat{\mathbb{C}}$
by the Thurston map $F\circ\phi:(S^2,P_F)\rightarrow(\widehat{\mathbb{C}},\phi(P_F))$, and use the
uniformization theorem to conclude that $S^2$ with this new complex structure is isomorphic to
$\widehat{\mathbb{C}}$ by some $\tilde{\phi}$, unique up to postcomposition by
$\text{Aut}(\widehat{\mathbb{C}})$. We then have the following commutative diagram where $F_\tau$ is
defined to be the holomorphic composition $\phi\circ F\circ\tilde{\phi}^{-1}$ which must be a rational map of degree $\text{deg}F$.

\centerline{ \xymatrix{(S^2,P_F) \ar[r]^{\tilde{\phi}} \ar[d]_F
&{(\widehat{\mathbb{C}},\tilde{\phi}(P_F))} \ar[d]^{F_\tau} \\ (S^2,P_F) \ar[r]^{\phi}
&{(\widehat{\mathbb{C}},\phi(P_F))}}}

\begin{definition} The \textit{Thurston pullback map} $\sigma_F:\mathscr{T}_F\longrightarrow\mathscr{T}_F$
is defined by $\sigma_F(\tau)=[\tilde{\phi}]$. \end{definition}

An important tool that has emerged over the past decade for studying Thurston theory has been the use of correspondences on moduli space \cite{Pil10,BEKP,KPS}. In many situations, the right-hand map in the correspondence is an inclusion, and the correspondence can simply be thought of as a map on moduli space.  See \cite{K} for explicitly computed examples of maps on moduli along with a discussion of the general theory of correspondences on moduli space.

In general, one should not expect the correspondence on moduli space to be a map on moduli space.  This can be seen from the correspondence for $f(z)=\frac{3z^2}{2z^3+1}$ which will be presented here following the discussion of \cite{BEKP}.  Let $\omega=-\frac{1}{2}+\frac{\sqrt{3}}{2}i$. Denote by $F_0=\frac{P_0}{Q_0}$ and $F_\infty=\frac{P_\infty}{Q_\infty}$ two degree $3$ rational functions with three simple critical points at $1,\omega,\bar{\omega}$, where \[P_0(z)=3z^2, Q_0(z)=2z^3+1, P_\infty(z)=z^3+2, Q_\infty(z)=3z.\]
Direct computation shows that both $F_0$ and $F_\infty$ have the same mapping properties as $f$ on the three simple critical points $1,\omega,\bar{\omega}$, and that the fourth simple critical points of $F_0$ and $F_\infty$ are $0$ and $\infty$ respectively.  Let $F=\frac{P}{Q}$ be a degree three rational map with simple critical points $1,\omega,\bar{\omega}$ on which $F$ has the same mapping properties as $f$.  Since the numerators of $F-F_0=\frac{PQ_0-QP_0}{QQ_0}$ and
$F-F_\infty=\frac{PQ_\infty-QP_\infty}{QQ_\infty}$ are both scalar multiples of $(z^3-1)^2$, there exists
$\alpha=[a,b]\in\mathbb{P}^1$ so that $a\cdot(PQ_\infty-QP_\infty)+b\cdot(PQ_0-QP_0)=0.$ One can solve this equation
for $\frac{P}{Q}$ and add a subscript to yield $$F_{\alpha}=\frac{aP_\infty+bP_0}{aQ_\infty+bQ_0}.$$

\noindent Finding the fourth critical point and critical value of $F_{\alpha}$ leads to the following
commutative diagram where $$X(z)=z^2,Y(z)=\frac{z(z^3+2)}{2z^3+1}$$ and
$$y=\pi(\tau), x=\pi\circ\sigma_f(\tau), A(\tau)=\frac{x^2-y}{2xy-2}.$$  Use $\Theta$ to denote the set of cube roots of unity, and $\Theta'$ is the set of
sixth roots of unity. \[\xymatrix{ & \mathscr{T}_f \ar[dd]_{\pi}\ar[rr]^{\sigma_f} \ar[dr]^A & &
\mathscr{T}_f \ar[dd]^{\pi} \\ &&\rs-\Theta' \ar[dl]_Y \ar[dr]^X &\\ & \rs-\Theta & & \rs-\Theta.} \]



\subsection{Virtual Endomorphisms}
\label{subsec:DefinitionsEndomorphisms}

Let $H$ be a finite index subgroup of a group $G$.  A virtual endomorphism is a homomorphism $\phi:H\rightarrow G$.  The purpose of this section is to describe the relationship between two important virtual endomorphisms, one defined on the fundamental group of moduli space, and the other on the pure mapping class group of the dynamical plane.  This relationship was used in \cite{BN06}, and is crucial here as well.

As before, let $F$ be a Thurston map of degree $d$ with $|P_F|>3$.  Suppose that the following diagram commutes, where $\circledast$ is the unique fixed point of $\sigma_F$, $z_0:=\pi(\circledast)$, $w_0:=A(\circledast)$, and the maps $X,Y,$ and $A$ are defined according to the discussion in \cite{K}:
\begin{align*}\label{Wspace}
\xymatrix{ & (\mathscr{T}_F,\circledast) \ar[dd]_{\pi}\ar[rr]^{\sigma_F} \ar[dr]^A & &
(\mathscr{T}_F,\circledast) \ar[dd]^{\pi} \\ && (\mathscr{W}_F,w_0) \ar[dl]_Y \ar[dr]^X &\\ & (\mathscr{M}_F,z_0) & & (\mathscr{M}_F,z_0).}
\end{align*}
The first virtual endomorphism is defined on a subgroup of  $\pi_1(\mathscr{M}_F,z_0)$, where the domain of definition is
\[H=\{[\gamma]\in\pi_1(\mathscr{M}_F,z_0)|\, \gamma \text{ lifts to a loop } \tilde{\gamma} \text{ based at } w_0 \text{ under } Y\}.\]  
It is clear that this subgroup has finite index since $Y$ restricts to a cover.  Define the virtual endomorphism $\phi_F:H\rightarrow\pi_1(\mathscr{M}_F,z_0)$ by $\phi_F([\gamma])=X_*([\tilde{\gamma}])$, where $X_*$ is the induced map on the fundamental group.

The second virtual endomorphism is defined on a subgroup of the pure mapping class group of the dynamical plane.  The domain of definition is $H_F=\{[h]\in\mathrm{PMCG}(S^2,P_F)|\text{ there exists } \tilde{h} \text{ as follows }\}$ where $\tilde{h}$ is a homeomorphism that fixes the points in $P_F$ and makes the following commute:

\centerline{ \xymatrix{(S^2,P_F) \ar[r]^{\tilde{h}} \ar[d]_F &{(S^2,P_F)} \ar[d]^{F} \\ (S^2,P_F) \ar[r]^{h}
&{(S^2,P_F)}}}

\noindent The virtual endomorphism $\psi:H_F\rightarrow\mathrm{PMCG}(S^2,P_F)$ is defined by $\psi(h)=\tilde{h}$.  

The relationship between these two virtual endomorphisms is completely understood and was used in \cite{BN06}.  We briefly describe an explicit bijection between the points in $\mathscr{T}_F$ and homotopy classes of paths starting at the identity map in moduli space where homotopies fix endpoints of paths for all time \cite[Thm 2.4]{Lod}.  Given a point in Teichmuller space, choose a normalized representive $\phi$ that fixes three points in $P_F$.  It is well known that $\phi$ is isotopic to the identity fixing the three points for all time \cite{Fa}, and the trace of $P_F$ yields a path in moduli space based at the identity.  Conversely, given a path in moduli space based at the identity, isotopy extension yields a path in $\mathscr{T}_F$ based at the identity, and the point corresponding to the path in moduli space is chosen to be the endpoint of the path in $\mathscr{T}_F$.  This explicit bijection restricts to a bijection between $H$ and $H_F$ which has been shown to be natural \cite[Thm 2.6]{Lod}.

\subsection{Schreier Graphs and the Reidemeister-Schreier Algorithm}
\label{subsec:ReidemeisterSchreier}

The Reidemeister-Schreier algorithm will be used to compute the virtual endomorphism on the fundamental group.  The discussion of the algorithm given here follows the more abstract presentation given in \cite{Bog}.  Let $F:S^2\rightarrow S^2$ be a degree $d$ connected branched cover with critical values $V_F$.  Fix a basepoint $z_0\in S^2\setminus F^{-1}(V_F)$ with image $w_0=F(z_0)$.  Note that the restriction of $F$ to the complement of $F^{-1}(V_F)$ is a cover without branching, which by abuse of notation is also called $F$.  It is evident that $G:=\pi_1(S^2\setminus V_F,w_0)$ is a finitely generated free group on $|V_F|-1$ generators, and the basis set $S$ for $G$ is chosen to be a bouquet of $|V_F|-1$ disjoint oriented loops based at $w_0$, where each loop bounds a unique point of $V_F$.  Let $F_*$ be the induced map on the fundamental group and observe that since $G$ is free, the subgroup $H:=F_*(\pi_1(S^2\setminus F^{-1}(V_F),z_0))$ corresponding to the cover is free.  Schreier's formula \cite[p.66]{Bog} can be used to compute the rank of $H$:
\[\text{rank}(H)=d\cdot(|S|-1)+1.\]

The \textit{Schreier graph} is a labeled directed graph that can be used to find explicit generators for $H$.  This graph can be produced abstractly, but for the sake of this discussion it is embedded in the restricted domain of $F$.  The vertices of the Schreier graph are the points in $F^{-1}(z_0)$. If $v_0$ is a vertex and $s\in S$, denote the unique lift of $s$ starting at $v_0$ by $F^{-1}(s)[v_0]$.  A directed edge $e$ connects $v_0$ to $v_1$ if there is an element $s\in S$ so that the endpoint of $F^{-1}(s)[v_0]$ is $v_1$.  The vertex $v_0$ is called the \textit{initial vertex} of edge $e$ and is denoted by $i(e)$.  The vertex $v_1$ is called the \textit{terminal vertex} of $e$ and is denoted by $t(e)$.  If $e$ is an edge in the Schreier graph, the \textit{label} of $e$ is defined by $\ell(e)=s$ where $s$ is the unique element so that the endpoint of $F^{-1}(s)[i(e)]$ is $t(e)$.  Denote the reverse of the edge $e$ by the formal symbol $e^{-1}$, and extend the notion of initial and terminal vertex in the obvious way: $i(e^{-1}):=t(e)$ and $t(e^{-1}):=i(e)$.  Define a finite path in the Schreier graph to be a finite list of edges or reversed edges $e_1e_2...e_n$, where for all $j$, $t(e_j)=i(e_{j+1})$, $1\leq j <n$.  The notion of label can easily be extended inductively to finite paths:
\begin{itemize}
\item $\ell(\text{empty path}):=1$
\item for any edge $e$, $\ell(e^{-1}):=\ell(e)^{-1}$
\item for paths $p,q$ in the Schreier graph with $t(p)=i(q)$, $\ell(pq):=\ell(p)\ell(q)$.  
\end{itemize}
Choose a maximal subtree $\Delta$ of the Schreier graph, and for some vertex $v$ in the Schreier graph, denote by $p_{v}$ the unique shortest path from $z_0$ to $v$ contained in $\Delta$.  Then for every edge $e$ where neither $e$ nor $e^{-1}$ is contained in $\Delta$, define $p_e$ to be the loop $p_{i(e)}ep_{t(e)}$ in the Schreier graph based at $z_0$.  A classical theorem \cite[p.67]{Bog} asserts that $H$ is generated by:
\[\{\ell(p_e)|e \text{ is an edge where } e \text{ and } e^{-1} \text{ are not contained in }\Delta\}.\]

Define the \textit{Schreier transversal} of a Schreier graph with fixed choice of maximal subtree $\Delta$ to be the following set of $d$ elements: 
\[T=\{\ell(p_{v}^{-1})|v \text{ is a vertex in the Schreier graph}\}.\] Having found generators for $H$, we now describe the Reidemeister-Schreier algorithm which accepts as input a word $h\in H$ written in terms of $S\cup S^{-1}$, and rewrites it as a product of the basis elements for $H$ (and their inverses).  First some notation is defined: let $p$ be the path in the Schreier graph starting at $z_0$ with $\ell(p)=g$ for some $g\in G$, and let $t$ be the unique shortest path in $T$ starting at $z_0$ and having the same endpoint as $p$.  Define $\overline{g}$ to be $\ell(t)$.  
Then for $t\in T$ and $s\in S$, define $\gamma(t,s):=ts(\overline{ts})^{-1}\in H$.  Then given $h=s_1s_2...s_k,$ $s_j\in  S\cup S^{-1}$, the rewriting of $h$ is given by
\[h=\gamma(1,s_1)\gamma(\overline{s_1},s_2)...\gamma(\overline{s_1s_2...s_{k-2}},s_{k-1})\gamma(\overline{s_1s_2...s_{k-1}},s_k).\]

\subsection{Iterated Monodromy Groups and Wreath Recursions}
\label{sec:IMG}
The algebraic tools used to solve the twisted rabbit problem are presented here.  These tools apply to the more general setting of partial self-coverings \cite{BN06}, but we simply take $F:S^2\rightarrow S^2$ to be a finite branched cover.

First, fix a basepoint $z_0\in S^2\setminus P_F$ and a positive integer $k$.  Denote the unique lift of $\gamma$ under $F^k$ starting at $z\in F^{-k}(z_0)$ by $F^{-k}(\gamma)[z]$.   The group $\pi_1(S^2\setminus P_F,z_0)$ acts on the right of the set $F^{-k}(z_0)$ by declaring $z\cdot\gamma$ to be the endpoint of $F^{-k}(\gamma)[z]$.  The induced action on the disjoint union $\coprod_{n\geq 0} F^{-n}(z_0)$ is called the \textit{iterated monodromy action}. We now construct the tree of preimages and describe the iterated monodromy action on this tree.  Let $z_0$ be the root of the tree and each element of $\coprod_{n>0} F^{-n}(z_0)$ a vertex.  Join each vertex $z\in F^{-n}(z_0)$ to each of the vertices in $F^{-1}(z)\subset F^{-n-1}(z_0)$.  It is easily shown that the iterated monodromy action (extended to the edges in the obvious way) acts by automorphisms of this tree, but this action is not necessarily faithful.  Thus we define the \textit{iterated monodromy group} IMG($F$) as follows:
\[\text{IMG}(F)=\pi_1(S^2\setminus P_F,z_0)/ker\]
where $ker$ is the kernel of the iterated monodromy action.

There is a convenient way to index these trees along with their automorphisms.  Let $X=\{1,...,d\}$ and denote the set of strings of length $n$ in the letters $1,...,d$ by $X^n$, and the set of infinite strings in these $d$ letters by $X^{*}$.  Identify the tree of preimages of $z_0$ with the set $X^{*}$ as follows:
\begin{itemize}
\item Identify $z_0$ with the empty word
\item Choose a bijection between $z\in F^{-1}(z_0)$ and $x\in X$. For each $x\in X$, choose paths $\ell_x$ between $z_0$ and the point in $F^{-1}(z_0)$ corresponding to $x\in X$.
\item Suppose $v$ is a word in the alphabet $X$ and $x\in X$.  The identification here will be made using induction.  Let $z\in F^{-n}(z_0)$ corresponds to $v$; identify $vx$ with the endpoint of the path $F^{-n}(\ell_x)[z]$.
\end{itemize}
The iterated monodromy action can be conjugated by this bijection to yield an action of $\pi_1(S^2\setminus P_F,z_0)$ on $X^*$.  Section \ref{sec:knownresults} describes how the iterated monodromy group solved the twisted rabbit problem.

Another important tool in \cite{BN06} for understanding the action of the fundamental group on $X^*$ are wreath recursions.  Denote by $S_d$ the symmetric group on $d$ letters.  Multiply elements of $S_d$ in the following way:
\[(\,1\quad 4\quad2\,)(\,1\quad 3\quad 4\,)=(\,2\quad 3\quad 4\,).\]
Define the \textit{wreath product} $G\wr S_d$ for some group $G$ to be $G^d\rtimes S_d$ where $S^d$ acts on the $d$-fold product $G^d$ by permutation of coordinates.  Thus, if $\langle\langle g_1,...,g_d\rangle\rangle\sigma$ and $\langle\langle h_1,...,h_d\rangle\rangle\tau$ are elements of $G\wr S_d$, multiplication is defined by:
\[\langle\langle g_1,...,g_d\rangle\rangle\sigma\langle\langle h_1,...,h_d\rangle\rangle\tau=\langle\langle g_1 h_{\sigma(1)},...,g_d h_{\sigma(d)}\rangle\rangle\sigma\tau\]
For example, if $G$ is the free group on generators $\alpha$ and $\beta$,
\[\langle\langle 1,\beta\alpha,\alpha,\beta^{-1}\rangle\rangle(\,1\quad 4\quad 2\,)\langle\langle\beta\alpha,\alpha^{-1},1,\beta\rangle\rangle(\,1\quad 3\quad 4\,)\]
\[=\langle\langle\beta,\beta\alpha\beta\alpha,\alpha,\beta^{-1}\alpha^{-1}\rangle\rangle(\,2\quad 3\quad 4\,)\]
A \textit{wreath recursion} is a homomorphism $\Phi:G\rightarrow G\wr S_d$.  For $x\in X$, denote the restriction to the $x$th coordinate of $\Phi(g)$ by $g|_x$.  For $v\in X^*$ and $x\in X$, inductively define $g|_{xv}:=(g|_x)|_v$.  Define the action of $G$ on $X$ by projecting to the second factor $G\wr S_d\rightarrow S_d$, and observe that this action extends to the associated action of $G$ on $X^*$ by the formula $(vx)^g=v^g x^{g|v}$.  The following proposition coming from \cite[p.7]{BN06} where by convention, path multiplication is defined by following the leftmost path in the positive direction and ending with the rightmost.  The bar denotes the reverse path.

\begin{theorem} The action of $\pi_1(S^2\setminus P_F, z_0)$ on $X^*$ is the action associated with $\Phi:\pi_1(S^2\setminus P_F,z_0)\rightarrow \pi_1(S^2\setminus P_F,z_0)\wr S_d$ given by
\[\Phi(\gamma)=\langle\langle\ell_1\gamma_1\overline{\ell}_{k_1},\ell_2\gamma_2\overline{\ell}_{k_2},...\ell_d\gamma_d\overline{\ell}_{k_d}\rangle\rangle\rho\]
where $\gamma_i=F^{-1}(\gamma)[z_i]$, $z_i$ is the endpoint of $\ell_i$, $k_i$ is the element of $X$ corresponding to $z_i$, and $\rho$ is the permutation defined by $i\mapsto k_i$ for all $i\in X$.
\end{theorem}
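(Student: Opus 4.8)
The plan is to verify that the formula $\Phi(\gamma)=\langle\langle\ell_1\gamma_1\overline{\ell}_{k_1},\dots,\ell_d\gamma_d\overline{\ell}_{k_d}\rangle\rangle\rho$ really does encode the iterated monodromy action on $X^*$ under the chosen identification of the tree of preimages with $X^*$. The argument has two components: first check that $\Phi$ is a homomorphism into $\pi_1(S^2\setminus P_F,z_0)\wr S_d$, and second check that the action associated to $\Phi$ (via the recursion $(vx)^g=v^g x^{g|v}$) agrees with the iterated monodromy action transported along the bijection. Since the iterated monodromy action is a genuine right action by automorphisms of the tree (as established in the discussion preceding the statement), and since the action associated to a wreath recursion is automatically an action on $X^*$, it suffices to check agreement on the first level $X$ and on the restrictions $g|_x$, because both actions are determined recursively by level-one data together with restriction to subtrees hanging off level-one vertices.

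First I would unwind the definitions on level one. Fix $\gamma\in\pi_1(S^2\setminus P_F,z_0)$ and, for each $i\in X$, let $\gamma_i=F^{-1}(\gamma)[z_i]$ be the lift of $\gamma$ starting at $z_i$, the endpoint of $\ell_i$; let $k_i\in X$ be the letter corresponding to the endpoint of $\gamma_i$. By the definition of the iterated monodromy action on $F^{-1}(z_0)$, the point $z_i$ is sent to the endpoint of $F^{-1}(\gamma)[z_i]=\gamma_i$, which corresponds to $k_i$. Hence under the identification $z_i\leftrightarrow i$, the permutation of $X$ induced by $\gamma$ is precisely $\rho\colon i\mapsto k_i$, matching the second factor of $\Phi(\gamma)$. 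Next, for the restriction: the path $\ell_i\gamma_i\overline{\ell}_{k_i}$ is a loop based at $z_0$ (it runs from $z_0$ out along $\ell_i$ to $z_i$, then along the lifted arc $\gamma_i$ to the point labeled $k_i$, then back along $\overline\ell_{k_i}$ to $z_0$), so it represents an element of $\pi_1(S^2\setminus F^{-1}(P_F)? ,z_0)$ — more precisely, since $F(\ell_i\gamma_i\overline\ell_{k_i})$ is homotopic to a loop in $S^2\setminus P_F$, this element lies in $\pi_1(S^2\setminus P_F,z_0)$ after pushing forward, i.e. it is the monodromy on the subtree above the vertex $i$, expressed in the $z_0$-based fundamental group via the chosen connecting paths. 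This is exactly what the recursion demands of $g|_i$.

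The heart of the argument, and the step I expect to be the main obstacle, is the bookkeeping that shows the recursive descent is consistent: one must verify that iterating this level-one description down the tree reproduces the full iterated monodromy action on each $X^n$, and in particular that the path-multiplication conventions (leftmost path first, bars for reversal) are exactly the ones under which the wreath-product multiplication $\langle\langle g_i\rangle\rangle\sigma\,\langle\langle h_i\rangle\rangle\tau=\langle\langle g_i h_{\sigma(i)}\rangle\rangle\sigma\tau$ corresponds to composition of monodromies. Concretely, given $\gamma,\delta\in\pi_1(S^2\setminus P_F,z_0)$, I would compute the lift of the concatenation $\gamma\delta$ starting at $z_i$: it runs first along $\gamma_i$ to the point labeled $k_i$, then along the lift of $\delta$ starting at that point, i.e. along $\delta_{k_i}$, landing at the point labeled $(\rho_\gamma\rho_\delta)(i)$. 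Inserting the connecting paths $\ell_i$ and $\overline\ell$ at the appropriate spots and cancelling $\overline\ell_{k_i}\ell_{k_i}$ in the middle shows $(\ell_i(\gamma\delta)_i\overline\ell_{\cdot})=(\ell_i\gamma_i\overline\ell_{k_i})(\ell_{k_i}\delta_{k_i}\overline\ell_{\cdot})$, which is precisely $g|_i\cdot(h|_{k_i})=g|_i\,h|_{\sigma(i)}$ with $\sigma=\rho_\gamma$. This gives the homomorphism property, and a straightforward induction on $n$ — using $(vx)^g=v^g x^{g|v}$ and the already-verified level-one statement applied to each restriction $g|_v$ — extends the agreement from $X$ to all of $X^n$, completing the identification of the wreath recursion with the iterated monodromy action.
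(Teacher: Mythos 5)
The paper does not prove this theorem; it cites \cite{BN06} for it and moves on, so there is no in-paper proof to compare against. That said, your sketch is correct and is the standard argument one finds in Bartholdi--Nekrashevych / Nekrashevych: verify the level-one permutation agrees (immediate from the definition of the monodromy action on $F^{-1}(z_0)$), verify that the wreath components $\ell_i\gamma_i\overline{\ell}_{k_i}$ are well-defined loops in $\pi_1(S^2\setminus P_F,z_0)$ encoding the restriction to the subtree over $z_i$, check the homomorphism property by lifting a concatenation and cancelling $\overline{\ell}_{k_i}\ell_{k_i}$, and then induct on word length via $(vx)^g=v^g x^{g|_v}$. Your multiplication bookkeeping is consistent with the paper's conventions (leftmost path traversed first, and $(\sigma\tau)(i)=\tau(\sigma(i))$ as shown by the worked example with transpositions), and the cancellation $(\gamma\delta)_i=\gamma_i\delta_{k_i}$ is exactly what makes $\Phi(\gamma\delta)=\Phi(\gamma)\Phi(\delta)$ come out right. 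The one place I would tighten the write-up is the inductive step: you assert that the level-one statement ``applied to each restriction $g|_v$'' finishes the induction, but this implicitly uses the geometric identity $F^{-(n+1)}(\gamma)[z]=F^{-n}\bigl(F^{-1}(\gamma)[F^{n}(z)]\bigr)[z]$ to peel one level off the lift, together with the fact that the composite connecting path for $z_v$ conjugates the intermediate lift to the $z_0$-based loop $\gamma|_v$. Spelling that identity out would make the induction airtight, but the approach is sound.
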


For a given wreath recursion, there is a common way to define $|X|$ corresponding virtual endomorphisms.  For $i\in X$, the domain of the virtual endomorphism $\phi_i:H_i\rightarrow G$ is the subgroup $H_i<G$ where $h\in H_i$ if the permutation factor of $\Phi(h)$ fixes $i$.  Then $\phi_i(h)$ is defined to be the projection to the $i$th component of $\Phi(h)$.

The contracting properties of wreath recursions are important for reducing infinite problems to finite ones.  A wreath recursion $\Phi:G\rightarrow G\wr S_d$ is contracting if there is a finite $\mathcal{N}\subset G$ so that for every $g\in G$, there is a positive number $n_0$ so that $g|_v\in\mathcal{N}$ for all words $v$ of length greater than $n_0$.  The smallest such $\mathcal{N}$ is called the nucleus of the action.  There is a useful characterization of the contracting property in \cite[p.57]{Nek}: \textit{ A wreath recursion defined on a group $G$ with generating set $S=S^{-1}$ and $1\in S$ is contracting if and only if there exists a finite set $\mathcal{N}$ and a number $k\in\mathbb{N}$ so that}
\[((S\cup\mathcal{N})^2)|_{X^k}\subset\mathcal{N}\]
There is also a notion of contraction for virtual endomorphisms.  Let $\phi:\text{dom}\phi\rightarrow G$ be a virtual endomorphism.  The spectral radius $\rho_\phi$ of $\phi$ is
\[\rho_\phi=\limsup_{n\rightarrow\infty}\sqrt[n]{\limsup_{g\in\text{dom}\phi^n,|g|\rightarrow\infty}\frac{|\phi^n(g)|}{|g|}}\]
where $|\cdot|$ denotes word length with respect to some fixed generating set of $G$.  In \cite[p.9]{BN06} we find the following proposition:

\begin{proposition}
Let $\Phi:G\rightarrow G\wr S_d$ be a wreath recursion, and let $\phi$ be an associated virtual endomorphism.  If $\Phi$ is contracting, then $\rho_\phi<1$.  If the action of $G$ is transitive on every level $X^n$ and $\rho_\phi<1$, then the wreath recursion $\Phi$ is contracting.
\end{proposition}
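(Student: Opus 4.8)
The proposition has two implications to establish, so I will treat them separately.

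For the first implication, the plan is to assume $\Phi$ is contracting with nucleus $\mathcal{N}$ and to bound the growth rate of an associated virtual endomorphism $\phi = \phi_i$. The key observation is that $\phi^n(g)$ is, by definition, the restriction $g|_v$ for the length-$n$ word $v = i i \cdots i$ (the iterated fixed letter), provided $g$ lies in $\mathrm{dom}\,\phi^n$. Since $\Phi$ is contracting, there is a constant $C = \max_{h\in\mathcal{N}}|h|$ such that for any $g$, once $n$ exceeds the threshold $n_0(g)$, we have $|\phi^n(g)| = |g|_v| \le C$. To extract a uniform geometric bound I would instead use the characterization quoted from \cite[p.57]{Nek}: there is a finite set $\mathcal{N}$ and $k\in\mathbb{N}$ with $((S\cup\mathcal{N})^2)|_{X^k}\subset\mathcal{N}$. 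Iterating this shows that for a word $g$ of length $m$ with respect to $S$, writing $g$ as a product of $m$ generators and pushing the restriction $|_v$ through, one gets that $g|_v$ for $|v| = k$ decomposes as a product of at most $m$ elements each of which, after the restriction, lies in a set of uniformly bounded size; a standard induction on $m$ (splitting the word in half and using the $k$-step absorption) yields $|\phi^{nk}(g)| \le \lambda^n |g| + (\text{const})$ for some $\lambda < 1$ once $m$ is large. Taking $n$-th roots and letting $|g|\to\infty$ then $n\to\infty$ gives $\rho_\phi \le \lambda^{1/k} < 1$.

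For the converse, I would assume $\rho_\phi < 1$ and that $G$ acts transitively on each level $X^n$. Transitivity is what lets one pass from control of a single restriction $\phi^n(g) = g|_{i^n}$ to control of all restrictions $g|_v$, $|v| = n$: given any word $v$ of length $n$, transitivity provides $h\in G$ with $i^n \cdot h = v$, and the cocycle identity $g|_v$ relates to $(h^{-1} g h)|_{i^n}$ up to bounded correction terms coming from a fixed finite set of "connecting" group elements (there are only finitely many cosets to worry about at each fixed level, and one can choose the $h$'s from a bounded set level by level — this is where the argument needs care). Since $\rho_\phi < 1$, for $n$ large and any $g$ we have $|g|_{i^n}| \le \tfrac{1}{2}|g|$, say; combined with the transitivity trick, $|g|_v| \le \tfrac{1}{2}|g| + K$ for all $|v| = n$ and a universal constant $K$. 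Iterating this contraction-with-error shows every $g|_v$ eventually enters the bounded set $\{h : |h| \le 2K\}$, which is finite, so $\Phi$ is contracting with nucleus contained in that set.

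The main obstacle in both directions is converting a spectral-radius or asymptotic statement, which only controls restrictions along the single "fixed-letter" word $i^n$, into a uniform statement about \emph{all} restrictions $g|_v$ over $v\in X^n$. In the forward direction this is handled by the combinatorial absorption property from \cite[p.57]{Nek}, which builds in uniformity over $X^k$ from the start; in the backward direction it is exactly the transitivity hypothesis that bridges the gap, via conjugation by a boundedly-chosen group element sending $i^n$ to $v$, and bookkeeping the finitely many bounded correction terms so they do not accumulate. I expect the forward direction to be essentially a citation-plus-induction and the converse to require the more delicate transitivity argument, so that is where I would spend the bulk of the write-up.
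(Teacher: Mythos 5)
Note first that the paper does not prove this proposition at all; it is stated and attributed to Bartholdi--Nekrashevych (the sentence immediately preceding it reads ``In \cite[p.9]{BN06} we find the following proposition''). So there is no in-paper argument to compare against, and your proposal has to be judged on its own terms against the source.

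Your forward direction is sound. From $((S\cup\mathcal{N})^2)|_{X^k}\subset\mathcal{N}$, pairing off the letters of a word of length $m$ and pushing the restriction through $k$ levels collapses it to a product of at most $\lceil m/2\rceil$ nucleus elements, and iterating gives $|\phi^{jk}(g)|\le\lceil m/2^j\rceil\cdot\max_{\mathcal{N}}|\cdot|$, whence $\rho_\phi\le 2^{-1/k}<1$. That is the standard argument and what you sketch is essentially it.

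The converse, however, has a real gap, and it is exactly where you flag that ``the argument needs care.'' As you have set it up, you seem to want elements $h$ carrying $i^n$ to $v$, with $h$ and the correction terms $h|_{i^n}$, $h'|_{v\cdot g}$ controlled \emph{uniformly across all levels $n$}. Level-transitivity gives you, for each fixed $n$, a finite transversal $\{t_v:v\in X^n\}$ with $i^n\cdot t_v=v$, but it says nothing about the word lengths of those $t_v$ or of their restrictions growing in $n$; trying to bound these across all $n$ at once is not justified by the hypotheses and is not how the proof goes. The resolution is that you never need to do this for all $n$. Since $\rho_\phi<1$, there is a \emph{single} $n$ with $|\phi^n(g)|\le\lambda|g|+K$ for all $g\in\mathrm{dom}\,\phi^n$ and some $\lambda<1$. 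Fix that one $n$, fix the finite transversal $\{t_v\}_{v\in X^n}$, and set $M=\max_v\max(|t_v|,|t_v|_{i^n}|)$, a constant that is finite because the transversal is a fixed finite set. Then for any $g$ and any $v\in X^n$, writing $w=v\cdot g$ one has $t_v\,g\,t_w^{-1}\in\mathrm{dom}\,\phi^n$ and
\[
g|_v=(t_v|_{i^n})^{-1}\cdot\phi^n(t_v\,g\,t_w^{-1})\cdot(t_w^{-1}|_w)^{-1},
\]
so $|g|_v|\le\lambda|g|+K+C(n,M)$ uniformly over $v\in X^n$. Iterating this contraction-with-additive-error along blocks of length $n$ shows every $g|_u$ eventually lands in a fixed finite ball, which is precisely the contracting property. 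The ``bounded correction terms'' are therefore not something you need to engineer level by level; they are automatic once the single level $n$ is fixed. Your write-up, as it stands, leaves that key reduction unstated and would stall on the boundedness issue you raised.
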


\subsection{Summary of Known Results}
\label{sec:knownresults}

The primary goals of this paper is to solve a twisting problem for a rational function, and compute the pullback on curves for this same function.  Earlier progress related to these problems will be discussed here, beginning with the solution to the twisted rabbit problem in \cite{BN06}.  More general work on twisted polynomial problems can be found in \cite{BN08,Nek09,Ke11}.

A Thurston map $F$ is a \textit{topological polynomial} if there is some point $\infty\in S^2$ so that $F^{-1}(\infty)=\{\infty\}$.  A \textit{Levy cycle} is a multicurve $\{\gamma_0,\gamma_1,...,\gamma_{n-1}\}$ so that the single nonperipheral component of the preimage of each $\gamma_i$ under $F$ is homotopic to $\gamma_{i-1}$, and $F$ maps each $\gamma_{i-1}$ to $\gamma_{i}$ by degree $1$ for each $i$ (indices are considered mod $n$).  For a topological polynomial $F$, it has been shown that every obstruction contains a Levy cycle.

Let $f$ be a quadratic polynomial with three finite post-critical points that are cyclically permuted.  All such polynomials can be found up to conjugacy by letting $f_c(z)=z^2+c$ and solving the equation $f_c^{\circ 3}(0)=0$ for $c$ where the root $c=0$ is ignored.  This leaves $c=-1.7549,-0.1226+0.7449i, -0.1226+0.7449i$ which correspond to the airplane polynomial $f_A$, rabbit polynomial $f_R$, and corabbit polynomial $f_C$ respectively.  Postcompose $f_R$ by a Dehn twist along a curve that avoids the postcritical set.  Obstructions for this twisted rabbit must contain Levy cycles, but such a cycle cannot exist for topological reasons.  Thus, the twisted rabbit is equivalent to one of $f_A$, $f_R$, or $f_C$.  Crucial to the work of Bartholdi and Nekrashevych is their Corollary 3.3, which asserts that two Thurston equivalent quadratic topological polynomials with identical postcritical set will have identical iterated monodromy groups.  After identifying the marked spheres for $f_A$, $f_R$, and $f_C$, the iterated monodromy groups of all three polynomials are shown to have different nuclei.  Thus, if one can compute the nucleus of the iterated monodromy group of a twisted rabbit, one can determine its Thurston class.

Bartholdi and Nekrashevych fix two generators of the pure mapping class group of the plane for $f_R$ which they denote $S$ and $T$.  Denote by $\psi:H_{f_R}<\text{PMCG}(\rs,P_{f_R})\rightarrow \text{PMCG}(\rs,P_{f_R})$ the virtual endomorphism on the pure mapping class group.  This virtual endomorphism is extended to a function $\overline{\psi}$ on all of $\text{PMCG}(\rs,P_f)$ so that $h\circ f_R$ is Thurston equivalent to $\overline{\psi}(h)\circ f_R$.  Thus one should try to understand the fate of $h$ under iterated application of $\overline{\psi}$.  In fact $\overline{\psi}$ is contracting \cite[Prop 4.2]{BN06}, and any $h$ lands in the set $\{id,T,T^{-1}\}$ after finitely many iterations.  If the orbit of $h$ lands on $id, T,$ or $T^{-1}$, it is shown that $f_R\circ h$ is equivalent to the rabbit, airplane, or corabbit respectively using iterated monodromy group computations \cite[Thm 4.8]{BN06}.

The next case of the twisted $z^2+i$ is dealt with in a similar way, but the twisted $z^2+i$ is equivalent to either $z^2+i, z^2-i$, or a $\mathbb{Z}$-parameter family of obstructed examples that are shown to be inequivalent to each other.  The last kind of quadratic with three finite postcritical points is the preperiod 2, period 1 case.  Each twisting is equivalent to one of the three quadratic polynomials realizing the critical portrait, and as in the rabbit case, it is impossible to have obstructed twistings.

We now move on to a digest of known results about iterated preimages of multicurves under a Thurston map.  Let $\Gamma$ and $\Gamma'$ be multicurves in $\rs\setminus P_F$ for some Thurston map $F$.  The multicurve $\Gamma$ pulls back to $\Gamma'$ if the essential nonperipheral components of $f^{-1}(\Gamma)$ form a multicurve homotopic to $\Gamma'$, and denote this by $\Gamma'\overset{f}{\rightarrow}\Gamma$.  In \cite{Pil10} it is shown that if the virtual endomorphism corresponding to a Thurston map is contracting, then there is a finite global attractor for the pullback function on multicurves.  Pilgrim also shows that there is a finite number of completely invariant multicurves when $F$ is a rational function.  There is an analytic method to prove existence of a finite global attractor by looking at contracting properties of the map on moduli space which can be applied to all quadratics with periodic finite critical point \cite[Cor 7.2]{Pil10}.  However, in the case of a mixture of attracting and repelling fixed points for the correspondence on moduli space (which is true of the correspondence for $f(z)=\frac{3z^2}{2z^3+1}$), these methods don't apply directly.    

There are few explicit computations of the dynamics of the pullback function.  It is proven in \cite{Pil10} that the pullback function of the rabbit polynomial has a finite global attractor which is a cycle of length 3.  The pullback function for $z^2\pm i$ is shown by computation to be eventually trivial, as is the pullback function for $z^2-0.2282\pm 1.1151i$.

\end{comment}

\section{General facts in the case $|P_f|=4$}
\label{sec:FourPostcriticalPoints}

We now specialize to the case when $f$ is a Thuston map with $P_f=\{z_0,z_1,z_2,z_3\}$. Multicurves in this context are just curves, and this section presents ways that these curves can be encoded using a variety of familiar objects such as points in the Weil-Petersson completion of $\mathscr{T}_f$, the extended rational numbers  $\overline{\mathbb{Q}}:=\mathbb{Q}\cup\{1/0\}$, and certain words in free groups on two generators.  Continued fractions are discussed as well, since computing the boundary values of Thurston's pullback map amounts to transforming even continued fraction expansions in some prescribed way.  The assignment of rational slopes to curves is a somewhat different notion, and is deferred until Section \ref{sec:slope} to avoid confusion with the assignment of rational numbers presented here.

Define the subset $\Theta:=\{z_1,z_2,z_3\}\subset P_f$.  The universal cover $\mathbb{H}\rightarrow\rs\setminus\{1,\omega,\overline{\omega}\}$ in Figure \ref{fig:cover} is defined by first taking the Riemann map sending the ideal triangle in $\mathbb{H}$ with vertices $(0,1,\infty)$ to the unit disk in $\mathbb{C}$ with vertices $(\overline{\omega},1,\omega)$.  This map is then extended by reflection to all of $\mathbb{H}$.
\begin{figure}[h]
\centering
\includegraphics[width=100mm]{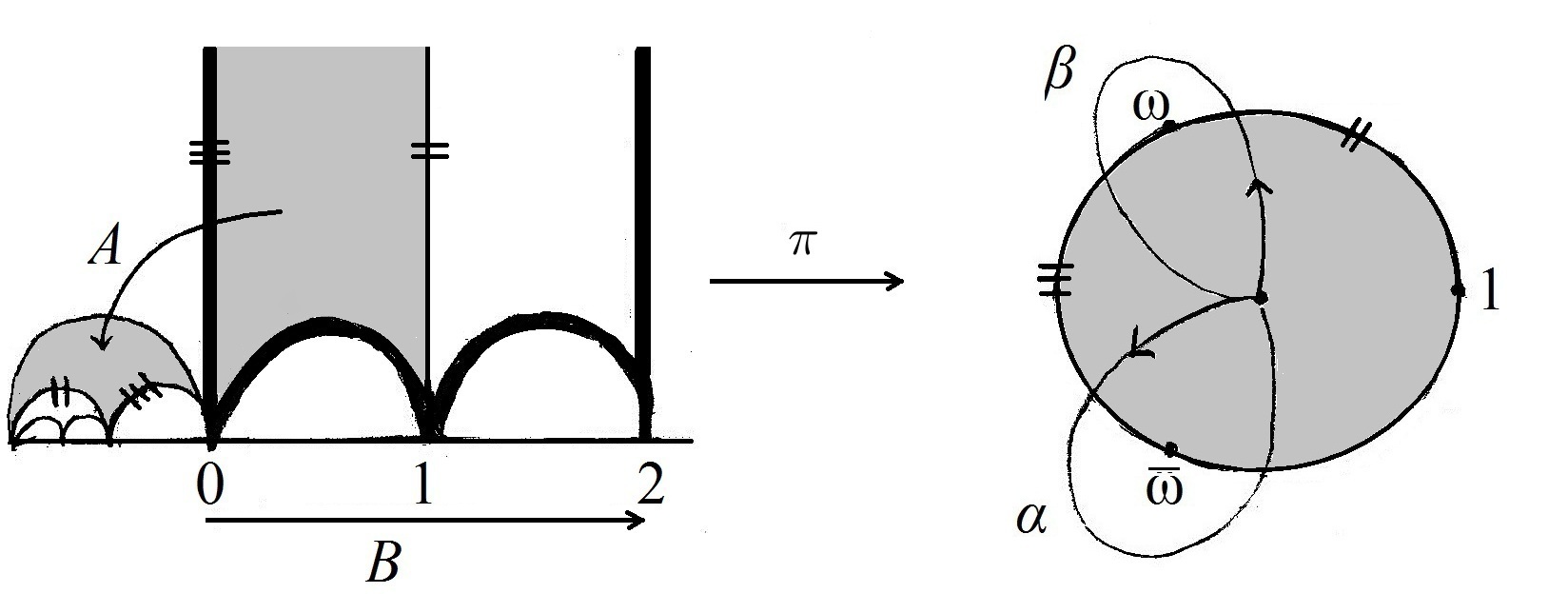}
\caption{Modular map used to define $\pi$}
\label{fig:cover}
\end{figure}
Postcompose this extended map by the unique M\"obius transformation sending $(1,\omega,\overline{\omega})$ to $(z_1,z_2,z_3)$ and denote the resulting map by $\pi:\mathbb{H}\rightarrow\rs\setminus\{z_1,z_2,z_3\}$.  The fundamental domain of the deck group of $\pi$ is chosen to be the half-open ideal quadrilateral with vertices $0,1,2$ and $\infty$ containing the arcs connecting $0$ to $1$ and $\infty$, but not containing the arcs connecting $2$ to $1$ and $\infty$.  The set $\pi^{-1}(z_0)$ intersects this fundamental domain at precisely one point $\tau_0$, and this is established to be the basepoint of the cover $\pi:(\mathbb{H},\tau_0)\rightarrow (\rs\setminus\{z_1,z_2,z_3\},z_0)$.

The deck group of $\pi$ is a well-known subgroup of the modular group, and we will fix a typical choice of generators of this group.  The following theory is classical and discussed in \cite{La}.  Denote by $\text{SL}_2(\mathbb{Z})$ the special linear group of $2\times 2$ matrices with integer coeffecients, and denote by $\text{PSL}_2(\mathbb{Z})$ the quotient of $\text{SL}_2(\mathbb{Z})$ by $\langle -I \rangle$ where $I$ denotes the identity matrix.  The group $\text{PSL}_2(\mathbb{Z})$ acts by orientation-preserving isometry on the upper halfplane by identifying $\left[\begin{smallmatrix} a&b\\ c&d \end{smallmatrix}\right]\in\text{PSL}_2(\mathbb{Z})$ with the M\"obius transformation $z\mapsto \frac{az+b}{cz+d}$.  The level two congruence subgroup $\Gamma(2)$ in $\text{SL}_2(\mathbb{Z})$ is defined to be the kernel of the obvious projection $\text{SL}_2(\mathbb{Z})\rightarrow \text{SL}_2(\mathbb{Z} / 2 \mathbb{Z})$, and the projectivized level two congruence subgroup is
$\text{P}\Gamma(2):=\Gamma(2)/\langle{-I}\rangle$.  Thus
\[\Gamma(2)=\{\left[ \begin{smallmatrix} a&b\\ c&d
\end{smallmatrix}\right]:a\equiv d\equiv 1, b\equiv c\equiv 0,
ad-bc=1\}.\]  
It is further known that $\text{P}\Gamma(2)$ is the deck group for the universal cover $\pi$, and  $\text{P}\Gamma(2)=\langle A,B\rangle$ where
$A = \left[
\begin{smallmatrix} 1&0\\ -2&1 \end{smallmatrix}\right]$, $B =\left[
\begin{smallmatrix} 1&2\\ 0&1 \end{smallmatrix}\right]$.

Since $A$ and $B$ generate the deck group, standard covering space theory gives a natural way to produce generators of $\pi_1(\rs\setminus\{z_1,z_2,z_3\},z_0)$ and $\text{PMCG}(\rs,P_f)$.  Choose an arc in the universal cover whose starting point is $\tau_0$ and whose endpoint is $A(\tau_0)$.  In like manner, choose oriented arcs connecting $\tau_0$ to $B(\tau_0)$ and $B^{-1}A^{-1}(\tau_0)$ (where $A^{-1}$ acts first).  Pushing these three arcs down by $\pi$ yield three loops $\alpha,\beta,$ and $\gamma$ respectively based at $z_0$.  Note that $\alpha$ bounds a singly-punctured disk and a doubly-punctured disk.  Orient $\alpha$ by declaring that the single puncture lies on the left side of $\alpha$, and orient $\beta$ and $\gamma$ in the same way.  Note that $\alpha,\beta,\gamma$ generate the fundamental group, and
\[\pi_1(\rs\setminus\{z_1,z_2,z_3\},z_0)=\langle\alpha,\beta\rangle=\langle\alpha,\beta,\gamma|\beta\alpha\gamma\rangle.\]

Now we will obtain generators for the mapping class group.  Suppose $\tau:[0,1]\rightarrow \rs\setminus\Theta$ is a path with $\tau(0)=z_0$.  Then the isotopy extension theorem
\cite{Hi76} guarantees the existence of an ambient isotopy
$$\phi:\rs\times I \longrightarrow \rs$$
where $\phi$ fixes small neighborhoods of $\Theta$ for
all $t$, and $\phi(\tau(t),t)=\tau(t)$.  The homeomorphism $\phi(\cdot,1)$ is called the \textit{ point push of $z_0$ along $\tau$}.  A point push along a positively-oriented simple loop can be written as a composition of a left and a right Dehn twist \cite[Fact 4.7]{Fa}, and in the setting here, the left Dehn twist is evidently trivial.  Thus pushing $z_0$ along the loops $\alpha,\beta,\gamma$ yield three right Dehn twists $T_\alpha, T_\beta, T_\gamma$ respectively that generate $\text{PMCG}(\rs,P_f)$ \cite{Bi74}.

When $|P_f|=4$, the model of Teichm\"uller space is taken to be $\mathbb{H}$, and the Weil-Petersson completion of Teichm\"uller space is $\mathbb{H}\cup\Qbar$ equipped with the horoball topology \cite{Wol09}.  Identify $\frac{p}{q}\in\overline{\mathbb{Q}}$ with the element $\left[
\begin{smallmatrix} p\\ q\end{smallmatrix}\right]$ of the projective line over the vector space $\mathbb{Z}^2$ denoted $P\mathbb{Z}^2$.  A left action of $\text{P}\Gamma(2)$ on $\overline{\mathbb{Q}}$ is induced from the action of $\Gamma(2)$ on $P\mathbb{Z}^2$ by left multiplication.  The set $\{\frac{0}{1}, \frac{1}{0}, -\frac{1}{1}\}$ is an orbit transversal for this action.  Note that $\frac{0}{1}, \frac{1}{0}$, and $-\frac{1}{1}$ are fixed by $A,B,$ and $B^{-1}A^{-1}$ respectively.  The stabilizer of some point $\frac{p}{q}$ is the set of group elements of the form $w^{-1}v^n w$ where $w\in$P$\Gamma(2)$ with $w.\frac{p}{q}\in\{\frac{0}{1}, \frac{1}{0}, -\frac{1}{1}\}$, $n$ an integer, and $v$ the unique element of $\{A,B,B^{-1}A^{-1}\}$ that fixes $w.\frac{p}{q}$.  The union $\bigcup_{\frac{p}{q}\in\overline{\mathbb{Q}}} \text{Stab}_{\text{P}\Gamma(2)}(\frac{p}{q})$ 
is defined to be the set of \textit{parabolic elements of} P$\Gamma(2)$.  Equivalently, an element $C\in\text{P}\Gamma(2)$ is parabolic if $(\text{trace}(C))^2=4$.  

Define the (unoriented) curve in $\rs\setminus P_f$ that corresponds to $\frac{p}{q}$ to be the core curve of the Dehn twist that comes from pushing the point $z_0$ in the positive direction along the loop that corresponds to $w^{-1}v w$.  For example, $\frac{9}{5}$ is fixed by left multiplication of the parabolic element $ BA^2(AB)A^{-2}B^{-1}\in \text{P}\Gamma(2)$, which corresponds to the Dehn twist that arises from pushing $z_0$ along the path $\beta^{-1}\alpha^{-2}\beta\alpha\alpha^2\beta$.  There are two pieces of data we associate to $\text{Stab}_{\text{P}\Gamma(2)}(\frac{p}{q})$ for each $\frac{p}{q}$: the element $*$ of the orbit transversal that lies in the orbit of $\frac{p}{q}$, and an essentially unique word $w$ so that $w.*=\frac{p}{q}$.  It is evident that $\text{Stab}_{\text{P}\Gamma(2)}(\frac{p}{q})=w\text{Stab}_{\text{P}\Gamma(2)}(*)w^{-1}$ where $\text{Stab}_{\text{P}\Gamma(2)}(*)$ is the infinite cyclic group generated by one of $A$,$B$, or $B^{-1}A^{-1}$ depending on $*$.

The following is an algorithm for finding an even continued fraction expansion of a reduced $\frac{p}{q}\in\mathbb{Q}$ closely related to the standard development in \cite{La}.  The input for the machine is a rational number $\frac{p}{q}$ and the output is $w$ and $*$ as above, along with the continued fraction notation for $\frac{p}{q}$ contained in the string $v$.  Let $p_0:=p$, $q_0:=q$ and let $k=0$, $w$ and $v$ the empty string.  Start at the beginning state of the machine in Figure \ref{fig:machine}.

Depending on which interval or singleton contains $\frac{p_k}{q_k}$, follow one of the seven outbound arrows to a new state.  If this new state has two concentric circles, append the expression inside the circles to the right of $v$, let $*:=\frac{p_k}{q_k}$, and terminate the algorithm. If the new state does not have two concentric circles, append the characters in the box lying on the arrow to the right of $v$.  Then use the following rules to compute the value of $\frac{p_{k+1}}{q_{k+1}}$ and the character to append to the right of the output string $w$:
\begin{itemize}
\item If $\frac{p_k}{q_k}\in(-\infty,-1)$, set $\frac{p_{k+1}}{q_{k+1}}:=B.\frac{p_k}{q_k}$, and append $B^{-1}$.
\item If $\frac{p_k}{q_k}\in(-1,0)$, set $\frac{p_{k+1}}{q_{k+1}}:=A^{-1}.\frac{p_k}{q_k}$, and append $A$.
\item If $\frac{p_k}{q_k}\in(0,1]$, set $\frac{p_{k+1}}{q_{k+1}}:=A.\frac{p_k}{q_k}$, and append $A^{-1}$.
\item If $\frac{p_k}{q_k}\in(1,\infty)$, set $\frac{p_{k+1}}{q_{k+1}}:=B^{-1}.\frac{p_k}{q_k}$, and append $B$.
\end{itemize}
Increment $k$ and repeat the process as described above in this paragraph until the process terminates.  The \textit{naive rational height} of $\frac{p}{q}$ is $\text{max}(|p|,|q|)$, and can be used to demonstrate that the algorithm terminates in finite time because the naive rational height of $\frac{p_k}{q_k}$ is always greater than that of $\frac{p_{k+1}}{q_{k+1}}$, except in the case when $\frac{p_k}{q_k}=\frac{1}{1}$ where the height is preserved.  The interpretation of the continued fraction notation $v=[a_0;a_1;...;a_n]$ just produced depends on the value of $a_n$.  If $a_n\neq\frac{0}{1},\frac{1}{0}$, we define
\[[a_0;a_1;a_2;...;a_n]:=a_0 + \cfrac{1}{a_1 + \cfrac{1}{a_2 + \cfrac{1}{ \ddots + \cfrac{1}{a_n} }}}.\]
If $a_n=\frac{0}{1}$, then $[a_0;a_1;a_2;...;a_n]:=[a_0;a_1;a_2;...;a_{n-2}]$.
Finally, if $a_n=\frac{1}{0}$,
$[a_0;a_1;a_2;...;a_n]:=[a_0;a_1;a_2;...;a_{n-1}].$
For example, $$\frac{7}{12}=A^{-1}.-\frac{7}{2}=A^{-1}B^{-1}.-\frac{3}{2}=A^{-1}B^{-1}B^{-1}.\frac{1}{2}=A^{-1}B^{-1}B^{-1}A^{-1}.\frac{1}{0},$$
and the machine yields the following continued fraction expansion:
\[\frac{7}{12}=[0;2;-2-2;2;\frac{1}{0}]=
  0+\cfrac{1}{
    2+\cfrac{1}{
       -4+\cfrac{1}{\frac{1}{2}}}}.\]

\begin{figure}[p]
\includegraphics[width=160mm, angle=270]{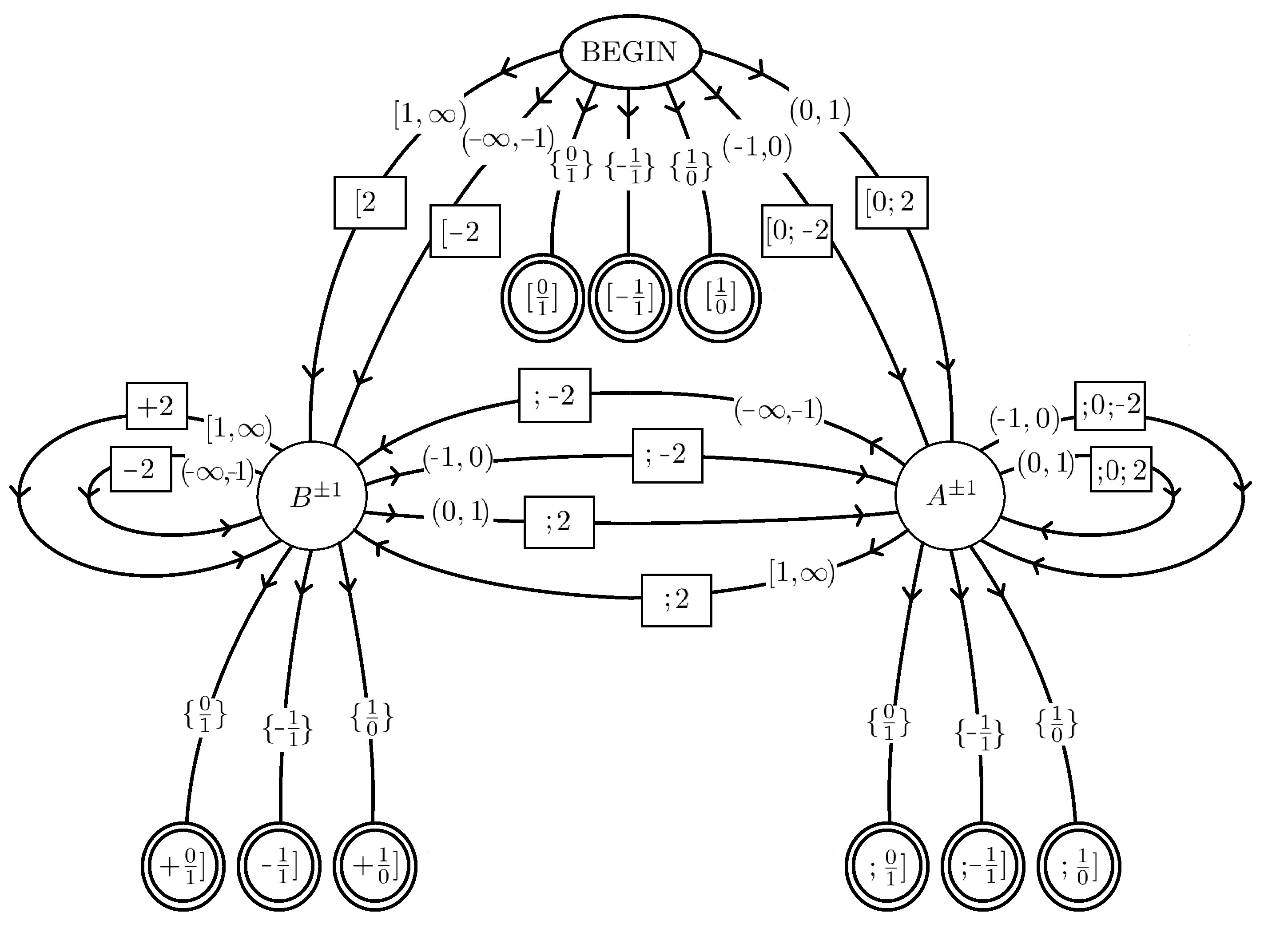}
\caption{Machine to compute even continued fraction expansion}
\label{fig:machine}
\end{figure}

For any fixed $*\in\overline{\mathbb{Q}}$ there are infinitely many reduced words $w\in\text{P}\Gamma(2)$ so that $w.*=\frac{p}{q}$, e.g. $BA^n.\frac{0}{1} =\frac{2}{1}$ for all integers $n$.  The algorithm presented above produces a word $w$ with the property that any other $w'$ with $w'.*=\frac{p}{q}$ contains $w$ as a subword (ignoring the trivial differences that arise from the fact that $A.\frac{1}{1}=B^{-1}.\frac{1}{1}$).  For example, suppose that $\frac{p}{q}=w'.\frac{0}{1}$ and that the algorithm produces $\frac{p}{q}=w.\frac{0}{1}$.  Then $(w')^{-1}w.\frac{0}{1}=\frac{0}{1}$ which means that $(w')^{-1}w$ lies in the maximal parabolic subgroup which fixes $\frac{0}{1}$.  Thus $(w')^{-1}w=A^n$ for some $n\in\mathbb{Z}$, and an examination of the algorithm from above allows one to conclude that $w$ must be a subword of $w'$ since the algorithm terminates as soon as $\frac{p_k}{q_k}=\frac{0}{1}$.

This section concludes with a discussion of the actions of $\text{PMCG}(\rs,P_f)$ and $\pi_1(\rs\setminus\Theta,z_0)$ on the set of extended rational numbers.  The action of $\text{P}\Gamma(2)$ on $\Qbar$ was defined to be a left action.  One could naturally define $\text{PMCG}(\rs,P_f)$ to act on the left, but we make it a right action because point pushing naturally identifies it with the fundamental group, which naturally acts on the right.  Thus, define $f\cdot g$ to mean $g\circ f$.  The isomorphism from $\pi_1(\rs\setminus\Theta,z_0)$ to $\text{PMCG}(\rs,P_f)$ is defined on generators as follows:
\[\alpha \mapsto T_\alpha\] 
\[\beta \mapsto T_\beta.\]
Let $G$ and $H$ be groups.  An \textit{antihomomorphism} from $G$ to $H$ is a function $\phi:G\rightarrow H$ so that $\phi(g_1 g_2)=\phi(g_2)\phi(g_1)$ for all $g_1,g_2\in G$.  The \textit{opposite} of a homomorphism $\phi:G\rightarrow H$ is the function $\tilde{\phi}:G\rightarrow H$ defined by $\tilde{\phi}(g_1 g_2)=\phi(g_2)\phi(g_1)$.  This is evidently an antihomomorphism.  The composition of two antihomomorphisms is a homomorphism.  The anti-isomorphism between $\text{P}\Gamma(2)$ and $\pi_1(\rs\setminus\Theta,z_0)$ is defined by postcomposing the homomorphism defined below by the opposite of the identity homomorphism:
\[A\mapsto \alpha\] 
\[B\mapsto \beta.\]

Thus, the deck group $\text{P}\Gamma(2)$ acts on the left, and the fundamental group and pure mapping class group act on the right.  The isomorphisms and anti-isomorphism described above yield isomorphic group actions on $\overline{\mathbb{Q}}$ in all three cases.  For example, $B^{-1}A^2BA^{-1}\in\text{P}\Gamma(2)$, $T_{\alpha}^{-1}\cdot T_{\beta}\cdot T_{\alpha}^2\cdot T_{\beta}^{-1}\in\text{PMCG}(\rs,P_f)$, and $\alpha^{-1}\beta\alpha^2\beta^{-1}\in\pi_1(\rs\setminus\Theta,z_0)$ are all identified, and
\begin{eqnarray} 
-\frac{41}{18} &=& B^{-1}A^2BA^{-1}.\frac{1}{0}\nonumber \\
&=& \frac{1}{0}.T_{\alpha}^{-1}\cdot T_{\beta}\cdot T_{\alpha}^2\cdot T_{\beta}^{-1} \nonumber \\
&=& \frac{1}{0}.\alpha^{-1}\beta\alpha^2\beta^{-1}\nonumber
\end{eqnarray}

We now describe a general procedure to draw curves in $\rs\setminus P_f$ corresponding to any $\frac{p}{q}\in\Qbar$.  It is easy to use the generators of the fundamental group to find curves corresponding to $\{\frac{0}{1}, \frac{1}{0}, -\frac{1}{1}\}$ as demonstrated in Figure \ref{fig:pointpushing}.  The algorithm above produces a word $w$ and an element $*\in\{\frac{0}{1}, \frac{1}{0}, -\frac{1}{1}\}$.  The element $w$ corresponds to an element of the fundamental group as was just described.  Record the effect on the curve corresponding to $*$ of pushing $z_0$ along this element of the fundamental group.  
\begin{figure}[h]
\centering
\includegraphics[width=120mm]{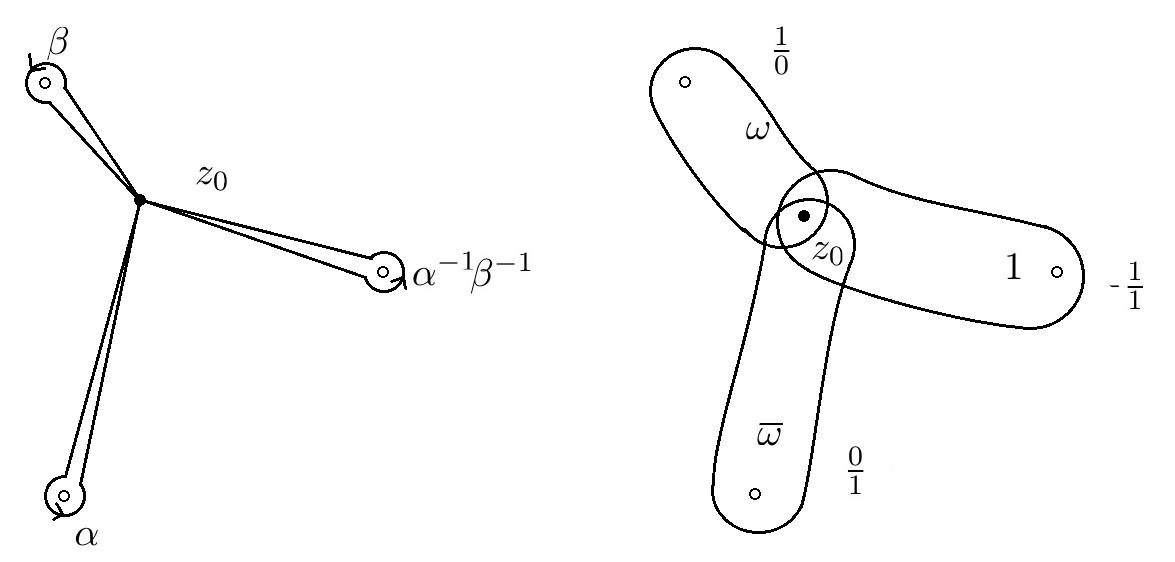}
\caption{Generators of fundamental group; three curves corresponding to points in orbit transversal}
\label{fig:pointpushing}
\end{figure} 
Figure \ref{fig:TwoOverOne} depicts the curve corresponding to $\frac{2}{1}$ which is found by pushing $z_0$ along $\beta$ and recording the effect on the curve corresponding to $\frac{0}{1}$.
\begin{figure}[h]
\centering
\includegraphics[width=55mm]{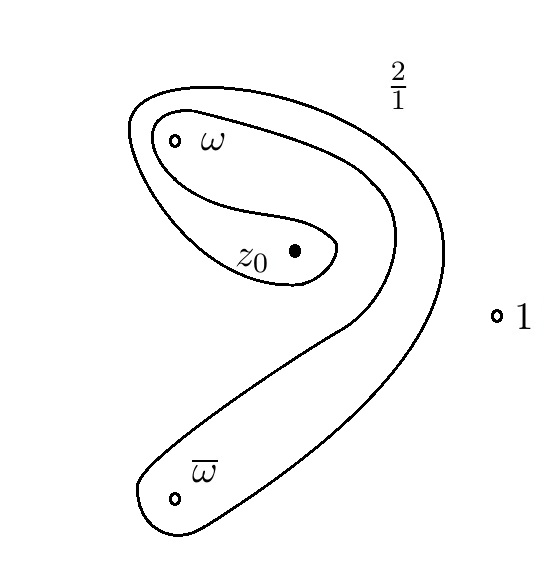}
\caption{The curve corresponding to $\frac{2}{1}$ found by pushing the curve corresponding to $\frac{0}{1}$ by $\beta$}
\label{fig:TwoOverOne}
\end{figure}

\end{comment}

\section{Slopes of Curves in $\rs\setminus P$ when $|P|=4$}
\label{sec:slope}

Section \ref{sec:FourPostcriticalPoints} presented a bijection between $\Qbar$ and the curves in $\rs\setminus P$ using the Weil-Petersson boundary of Teichm\"uller space.  A second natural way to assign an element of $\Qbar$ to such a curve is to compute its ``slope."  First normalize the four points by M\"obius conjugation to be $z_0,1,\omega,\bar{\omega}$.  We define the notion of slope for any $z_0$, and then give special consideration to the case $z_0=0$ where $P$ is just the postcritical set of the rational map $f$ considered later.  In this latter case, we relate the slope to the first assignment of extended rationals.

\subsection{Slopes in $\rs\setminus P$ when $|P|=4$}

To define slope on $\rs\setminus P$ where $P=\{z_0,1,\omega,\bar{\omega}\}$, we fix two minimally intersecting curves which are declared to have slope $\frac{0}{1}$ and $\frac{1}{0}$.  Connect $z_0$ and $\omega$ by a simple arc that avoids $P$ except at its endpoints.  Denote by $a$ the curve which is the boundary of a simply connected neighborhood of this arc.  Connecting $z_0$ and $\bar{\omega}$ by a simple arc that doesn't intersect $a$ or $P$, one uses a similar procedure to produce $b$.  The curves $a$ and $b$ in $\rs\setminus P$ are declared to have
slope $\frac{0}{1}$ and $\frac{1}{0}$ respectively.  One could chose an orientation for these curves by assuming for example that the point at infinity lies to the right; however, it will be evident in the following work that the definition of slope is independent of this choice of orientation.

Up to isomorphism, there exists a canonical double cover $(\mathbb{T},e_0)\longrightarrow(\widehat{\mathbb{C}},0)$ of $\rs$ branched over the four points in
$P$ where $e_0\in \mathbb{T}$ is the preimage of $0$.  This cover is unique up to isomorphism in the branched sense.  Denote by $\tilde{P}$ the set of preimages of $P=\{z_0,1,\omega, \bar{\omega}\}$
under the double cover where clearly $|\tilde{P}|=4$.  Also, denote by $(\mathbb{C},0)\longrightarrow(\mathbb{T},e_0)$ the universal cover, though an explicit choice of covering class representative will not be fixed until later.  The symbol $\simeq$ indicates that two curves in a surface are homotopic in that surface.

\begin{lemma}
\label{lem:torusLifts} The curves $a$ and $b$ in $\rs\setminus P$ each lift to pairs of
oriented simple closed curves $\tilde{a}_1, \tilde{a}_2$ and
$\tilde{b}_1, \tilde{b}_2$ in $\mathbb{T}\setminus \tilde{P}$
where $\tilde{a}_1 \simeq \tilde{a}_2$ and $\tilde{b}_1 \simeq
\tilde{b}_2$ in $\mathbb{T}$.
\end{lemma}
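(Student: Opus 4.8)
The plan is to analyze, under the branched double cover $\varpi\colon\mathbb{T}\to\rs$ of the statement, the preimages of the two disks into which $a$ (respectively $b$) separates the sphere; once one knows each such preimage is an annulus, the lemma follows immediately. First I would record the combinatorial picture. By construction $a$ is a simple closed curve in $\rs\setminus P$ separating $\rs$ into two closed disks $D_1\ni z_0,\omega$ and $D_2\ni 1,\bar\omega$, each meeting $P$ in exactly two points and with $\partial D_1=\partial D_2=a$; the analogous statement holds for $b$ with disks $D_1'\ni z_0,\bar\omega$ and $D_2'\ni 1,\omega$. Since the branch points of $\varpi$ all lie in the interiors of these disks, $\varpi$ restricts to an unbranched covering on a neighborhood of $a$, so $\varpi^{-1}(\overline{D_i})$ is a compact orientable surface with boundary $\varpi^{-1}(a)$, and $\varpi^{-1}(D_i)\to D_i$ is a degree-$2$ cover branched over exactly the two points of $P$ lying in $D_i$.

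The key step is to identify $\varpi^{-1}(D_i)$ as an annulus. It is connected: a disconnected degree-$2$ cover of the connected space $D_i$ would split as a disjoint union of two degree-$1$ covers, i.e. two homeomorphic copies of $D_i$, and would therefore be unbranched, contradicting that $D_i$ contains branch points. (Equivalently, the covering homomorphism $\pi_1(D_i\setminus P)\to\mathbb{Z}/2$ carries each of the two generating peripheral loops to the nontrivial element, hence is onto.) Being a connected degree-$2$ branched cover of a disk ramified at two points, Riemann--Hurwitz gives $\chi(\varpi^{-1}(D_i))=2\chi(D_i)-2=0$, so $\varpi^{-1}(D_i)$ is a connected orientable surface with nonempty boundary and vanishing Euler characteristic, that is, an annulus $A_i$. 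Its boundary $\varpi^{-1}(a)=\partial A_i$ thus has exactly two components; each maps onto $a$, and since together they cover $a$ with total degree $2$, each maps homeomorphically onto $a$. Consequently each component is a simple closed curve, it is disjoint from $\tilde P=\varpi^{-1}(P)$ because $a$ is disjoint from $P$, and it acquires an orientation by pulling back a fixed orientation of $a$. Label these curves $\tilde a_1,\tilde a_2$, and define $\tilde b_1,\tilde b_2$ from $b$ in the same way.

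To finish, observe that $\tilde a_1$ and $\tilde a_2$ are the two boundary circles of the annulus $A_1=\varpi^{-1}(D_1)$, hence each is isotopic in $A_1$, and a fortiori in $\mathbb{T}$, to the core curve of $A_1$; therefore $\tilde a_1\simeq\tilde a_2$ in $\mathbb{T}$. Running the identical argument with $b$ and the disk $D_1'$ yields $\tilde b_1\simeq\tilde b_2$ in $\mathbb{T}$. The only genuinely substantive point, and the step I expect to need the most care in writing up, is the identification of $\varpi^{-1}(D_i)$ with an annulus, namely the connectivity argument combined with the Euler-characteristic count; everything after that is formal surface topology, and the orientation statement is just a matter of transporting a chosen orientation of $a$ and $b$ through the local homeomorphism $\varpi$.
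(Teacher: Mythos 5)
Your proof is correct, and it takes a genuinely different route from the paper's. The paper argues homologically: it checks that $a$ lies in the kernel of the mod-$2$ coefficient-sum map $H_1(\rs\setminus P;\mathbb{Z})\to\mathbb{Z}/2\mathbb{Z}$, concludes that $a$ lifts to two disjoint curves, and then proves $\tilde a_1\simeq\tilde a_2$ by observing that the nontrivial deck transformation of $\mathbb{T}\to\rs$ lifts to $z\mapsto -z$ on the universal cover, hence acts by $-1$ on $H_1(\mathbb{T};\mathbb{Z})$, and two simple closed curves on a torus with homology classes agreeing up to sign are freely homotopic. You instead cut $\rs$ along $a$ into two disks each containing two branch points, identify each preimage $\varpi^{-1}(D_i)$ as a connected branched double cover with $\chi=0$ and nonempty boundary — i.e.\ an annulus — and then read off both the fact that $\varpi^{-1}(a)$ has exactly two components and the homotopy $\tilde a_1\simeq\tilde a_2$ from the elementary fact that the two boundary circles of an annulus are isotopic to its core. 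Your version is more elementary in that it avoids the (mildly delicate) identification of the hyperelliptic involution's lift as $z\mapsto -z$, and it gives slightly more structural information (the two lifts are not merely homotopic but cobound an embedded annulus in $\mathbb{T}$ disjoint from $\tilde P$, with the branch points falling one to each side). The paper's version is shorter once one grants the standard lifting criterion and the behavior of the hyperelliptic involution, and it has the advantage of making the mod-$2$ homology criterion explicit, which is the natural language for seeing at a glance which curves on $\rs\setminus P$ lift. Both proofs also handle the orientation clause correctly: you pull back an orientation of $a$ through the degree-$1$ restriction of $\varpi$ to each component, while the paper leaves this implicit. The one place to be careful in your write-up is the Riemann--Hurwitz count: you should say explicitly that each branch point is simple (local degree $2$) so that each contributes $-1$, yielding $\chi=2\cdot 1-2=0$; this is forced by degree considerations but is worth a sentence.
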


\begin{proof}
Let $g_1,g_2,g_3,g_4$ be small positively oriented loops about each
point in $P$, where the four homology classes $[g_i]$ evidently generate $H_1(\widehat{\mathbb{C}}\setminus{P},\mathbb{Z})$.  In other words, $H_1(\widehat{\mathbb{C}},\mathbb{Z})=\{\sum{c_i
[g_i]}|c_i\in\mathbb{Z}\}$.  A
simple closed curve in $\rs\setminus{P}$ lifts to
the double cover if it lies in the kernel of the homomorphism
$\rho:H_1(\widehat{\mathbb{C}},\mathbb{Z})\longrightarrow \mathbb{Z}/{2\mathbb{Z}}$ defined by
$$\rho(\sum{c_i [g_i]})=\sum{c_i} (\text{mod 2}).$$
Since the curves $a$ and $b$ are nonperipheral, they must bound a
disc containing two of the loops, so without loss of generality, one can say that $a$ is homologous to $\pm(g_1+g_2)$.  Since
$$\rho(a)=\rho(\pm(g_1+g_2)) =\pm(1+1) \equiv 0$$
it is clear that $a$ lifts to $\mathbb{T}$.  A similar argument applies to $b$.  Since the cover is two-to-one, $a$ lifts to two disjoint curves $\tilde{a}_1,\tilde{a}_2$ in $\mathbb{T}$, and $b$ lifts to two disjoint curves $\tilde{b}_1,\tilde{b}_2$.  Note that $\tilde{a}_1$ and $\tilde{b}_1$ intersect minimally in $\rs\setminus P$ in the sense that
\[\min_{a'\simeq \tilde{a}_1,b'\simeq \tilde{b}_1}|a'\cap b'|=1\] 
because
\[\min_{a'\simeq a,b'\simeq b}|a'\cap b'|=2.\]
So in fact these the curves $\tilde{a}_i$ are not homotopic to the curves $\tilde{b}_i$ in $\mathbb{T}$.

We prove next that $\tilde{a}_1 \simeq \tilde{a}_2$ and $\tilde{b}_1 \simeq
\tilde{b}_2$ in $\mathbb{T}$.  The
nontrivial deck transformation $h:\mathbb{T}
\rightarrow \mathbb{T}$ interchanges $\tilde{a}_1$ with
$\tilde{a}_2$ and it interchanges $\tilde{b}_1$ with $\tilde{b}_2$.  On the level of
the universal cover $(\mathbb{C},0)\rightarrow (\mathbb{T},e_0)$, choose the lift of $h$ that fixes the origin.  The only involutive holomorphic deck transformation of the torus cover that fixes the origin is the map $z\mapsto-z$.  Thus, the induced map on homology is $h_*:H_1(\mathbb{T},\mathbb{Z})\rightarrow
H_1(\mathbb{T},\mathbb{Z})$ given by $(x,y)\mapsto(-x,-y)$.  But two curves in the torus whose homology classes agree up to sign are freely homotopic in the torus. 
\qed
\end{proof}

We now define precisely the notion of slope in $\mathbb{T}$.  Since $\tilde{a}_1$ and $\tilde{b}_1$ form an ordered basis of $H_1(\mathbb{T},\mathbb{Z})$, the assignment
\[\tilde{a}_1\mapsto (1,0)\]
\[\tilde{b}_1\mapsto (0,1),\]  
gives a natural identification $H_1(\mathbb{T},\mathbb{Z})\cong\mathbb{Z}\oplus\mathbb{Z}$.  Slopes are assigned to homology classes by the map \[(q,p)\mapsto \frac{p}{q}\] where $p$ and $q$ are relatively prime.  Define a \textit{curve of slope $\frac{p}{q}$ in $\mathbb{T}$} to be a simple closed curve lying in the homology class corresponding to $(q,p)$.  Suppose that the preimage of an essential curve in $\rs\setminus P_f$ is two essential homotopic curves in $\mathbb{T}$ with homology classes $(q,p)$ and $(-q,-p)$. These curves are both assigned a slope of $\frac{p}{q}$ in the torus.  A curve in $\rs\setminus P$ is said to be a \textit{curve of slope $\frac{p}{q}$ in $\rs\setminus P$} if it lifts to a curve of slope $\frac{p}{q}$ in the torus.

It is a standard fact that the cup product yields a signed intersection form for essential curves in $\mathbb{T}$, and this can be used to give another interpretation of slope in $\mathbb{T}$. Let $a$ and $b$ be two oriented curves in an oriented surface $X$. Apply a homotopy so the curves intersect minimally at $N$ distinct points $x_1,...,x_N$.  Then the \textit{signed intersection number} of $a$ and $b$ is defined by $\iota(a,b)=\sum_{j=1}^{N} sgn(x_j)$ where the sign function $sgn(x_j)$ is computed using the conventions below.
\begin{figure}[h]
\centerline{\includegraphics[width=55mm]{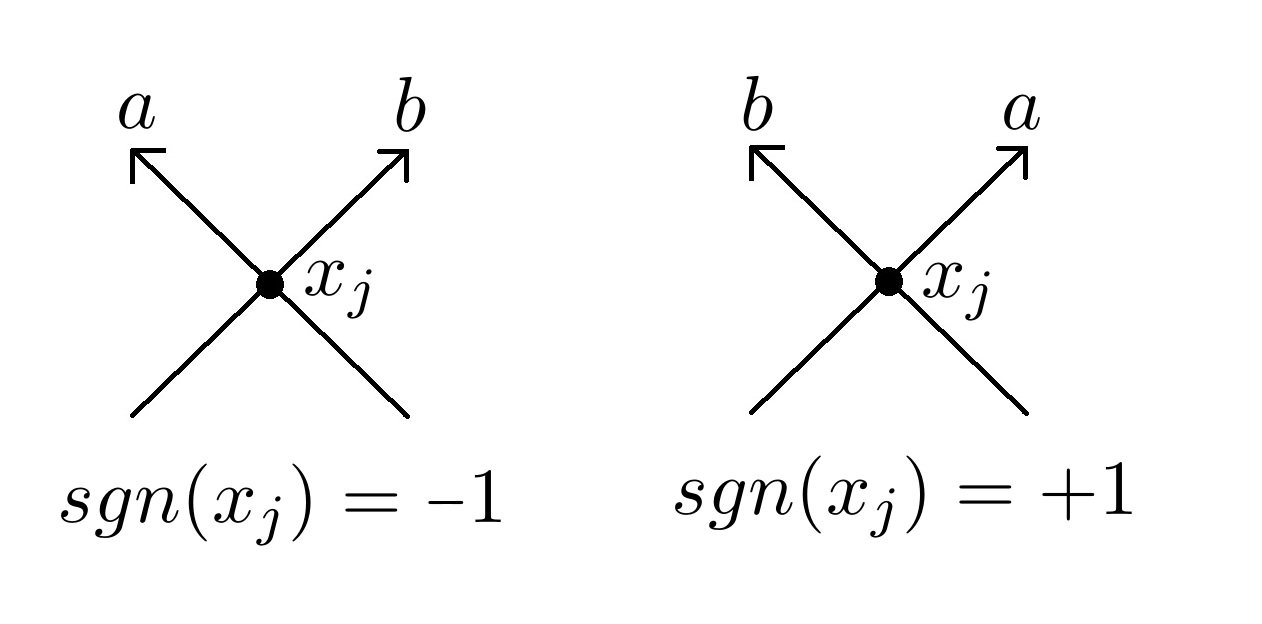}}
\label{fig:sigma}
\end{figure}

A straightforward cup product computation shows that the signed intersection number for curves of slope $\frac{p}{q}$ and $\frac{r}{s}$ in $\mathbb{T}$ is the determinant of $\left[\begin{smallmatrix} p&r\\ q&s \end{smallmatrix}\right]$.  One can then see that for relatively prime $p$ and $q$, a curve of slope $\frac{p}{q}$ will have $p$ signed intersections with $\tilde{a}_1$ and $q$ signed intersections with $\tilde{b}_1$.


Next we define slope in $\mathbb{C}$ with respect to the lattice $\Lambda$.  Note that $H_1(\mathbb{T},\mathbb{Z})\cong \pi_1(\mathbb{T},e_0)$, and so the basis formed by $\tilde{a_1}$ and $\tilde{b_1}$ act on the universal cover $(\mathbb{C},0)\rightarrow(\mathbb{T},e_0)$ by translation; explicitly choose the universal cover so that the translation corresponding to $a$ is $z\mapsto z+1$ and the one corresponding to $b$ is $z\mapsto z+ \tau$ for some nonreal complex number $\tau$. The torus $\mathbb{T}$ can then be thought of as the quotient $\mathbb{C}/\Lambda$ where $\Lambda=\langle 1,\tau \rangle$. 
The following defines \textit{the line of slope $\frac{p}{q}$ in $(\mathbb{C},\Lambda)$} where $t\in\mathbb{R}$ and $c_0\in\mathbb{C}$ is chosen so that the line avoids the lattice $\Lambda$:
$$t\mapsto t(p\cdot\tau+q)+c_0.$$
Pushing this curve down to
$\widetilde{\mathbb{C}}\setminus{P}$ and forgetting the
orientation on the curve yields a curve of slope $\frac{p}{q}$ in
$\widetilde{\mathbb{C}}\setminus{P}$ as defined before.

\subsection{Slopes in $\rs\setminus P$ when $P=\{0,1,\omega,\bar{\omega}\}$}

Now we apply the methods described above to the highly symmetric case when $P=P_f$.  The choice of $\frac{0}{1}$ and $\frac{1}{0}$ curves in $\rs\setminus P$ are exhibited in Figure \ref{fig:lattice}, and following the steps above, the map $\pi:\mathbb{C}\rightarrow\widehat{\mathbb{C}}$ is produced.  Because of the symmetry of $P$, a convenient triangulated model of $\pi$ exists, and furthermore, one can explicitly find a formula for $\pi$ in terms of the Weierstrass function.

The combinatorial model of $\pi$ is produced by mapping a certain triangle in $\mathbb{C}$ with label $3$ by isometry to the top face of the tetrahedron from Section \ref{sec:dynamicalPlaneFacts}.  Namely:
$$0 \mapsto B,\hspace{.5 cm} \frac{1}{2}  \mapsto C, \hspace{.5cm} \frac{1}{4}+\frac{\sqrt{3}}{4}\cdot i \mapsto D.$$
Extend the map over $\mathbb{C}$ using reflection.  Having produced the combinatorial model, postcompose this map to the tetrahedron by the radial projection and then the stereographic projection with north pole $N$ as in Section \ref{sec:dynamicalPlaneFacts} to obtain $\pi$.  This $\pi$ is evidently a
meromorphic function with double zeros at each point in the lattice
$\Lambda = \langle 1,\tau:=e^{2\pi\cdot i/3} \rangle$.  Then on $\mathbb{C}$, the function $\frac{1}{\pi}$ is doubly periodic with respect to
$\Lambda$ and has double poles at each point in $\Lambda$.  Denote by $\wp_{\Lambda}$ the Weierstrass function with lattice
$\Lambda$, i.e.
$$\wp_{\Lambda}(z)=\frac{1}{z^2}+\sum_{\lambda\in\Lambda}\frac{1}{(z-\lambda)^2}-\frac{1}{\lambda}.$$   
Liouville's theorem can be used to prove that there exists a number $c\in\mathbb{C}$ such that
$1/{\pi(z)}=c\wp_\Lambda(z)$.  To find the explicit value of $c$, note that $\pi(1/2)=1$ from an examination of Figure \ref{fig:lattice}, and so $c=\frac{1}{\wp_{\Lambda}(1/2)}$.  Thus:
$$\pi(z)=\frac{\wp_{\Lambda}(1/2)}{\wp_{\Lambda}(z)}.$$

We describe a simple way to produce a line of slope $\frac{p}{q}$ in $\widehat{\mathbb{C}}\setminus
P_f$.  
\begin{figure}[h!]
\centerline{\includegraphics[width=130mm]{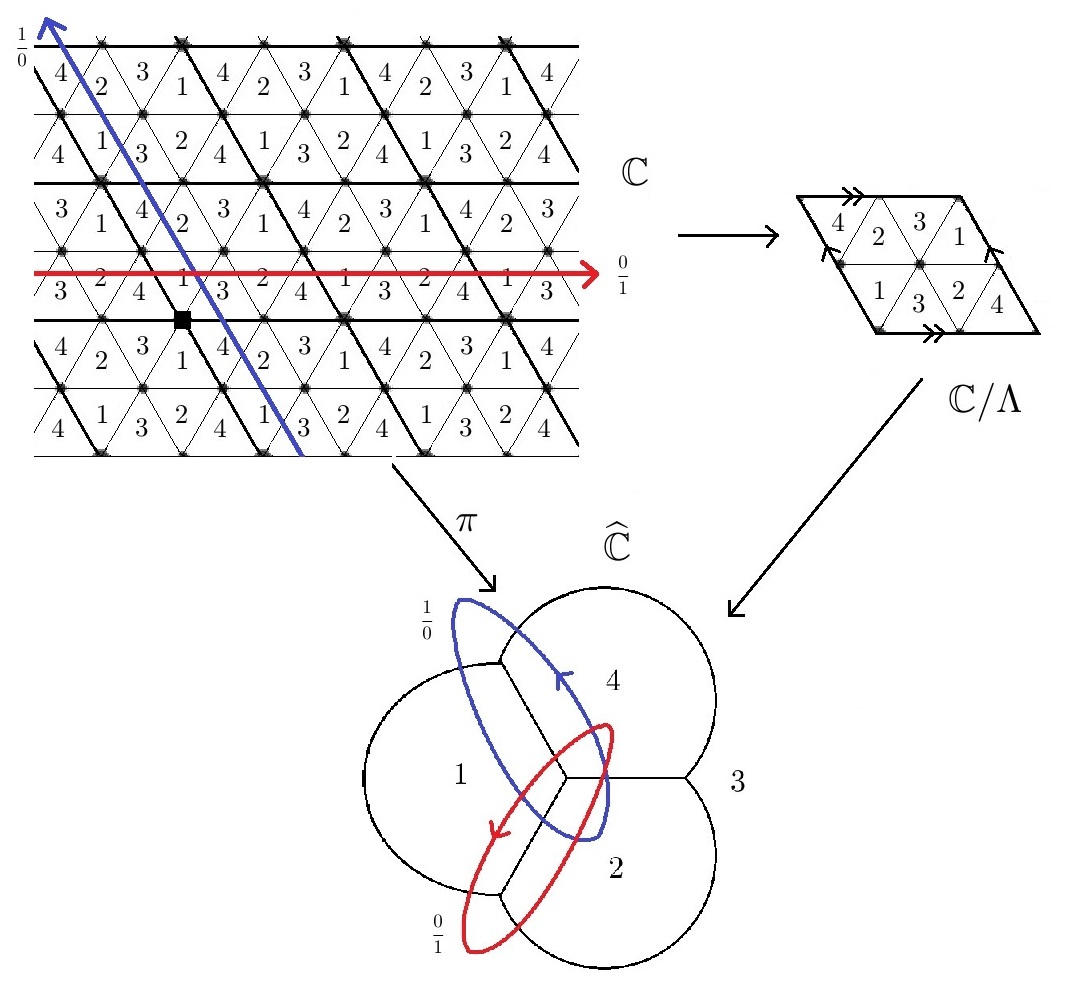}}
\caption{The map $\pi:(\mathbb{C},\Lambda)\rightarrow(\rs,0)$ used to compute slopes in $(\rs,P_f)$.  Larger dots indicate points in $\Lambda$, with the exception of the origin which is marked by a filled box.  The vertices of the small triangles are the half-lattice points of $\Lambda$.}
\label{fig:lattice}
\end{figure}
The line of slope $\frac{p}{q}$ in $(\mathbb{C},\Lambda)$ is
$$t\mapsto c_0+t(p\cdot e^{2\pi\cdot i/3}+q\cdot t), t\in\mathbb{R}$$
for a generic $c_0\in\mathbb{C}$.  Projecting this line to $\widehat{\mathbb{C}}\setminus P_f$ gives a
curve of slope $\frac{p}{q}$ in $\widehat{\mathbb{C}}\setminus P_f$.  Curves of slope $\frac{1}{0}$ and $\frac{0}{1}$ in both $(\mathbb{C},\Lambda)$ and $\rs\setminus P_f$ are depicted in Figure \ref{fig:lattice}.  It can be observed that a curve of slope $\frac{p}{q}$ in $\rs\setminus P_f$ lies in the same homotopy class as the curve corresponding to the point $-\frac{p}{q}$ in the Weil-Petersson boundary (this latter assignment is discussed in Section \ref{sec:FourPostcriticalPoints} and made explicit in Section \ref{fProperties}).

\end{comment}
\section{Analysis of a specific example: $f(z)=\frac{3z^2}{2z^3+1}$}
\label{fProperties}

 Some basic properties of $f$ are presented, and then a study is made of the moduli space map.  A wreath recursion is produced for the map on moduli space as well as for $f$.
\subsection{The Dynamical Plane of $f$}\label{sec:dynamicalPlaneFacts}
The postcritical set of $f$ is $P_f=\{0,1,\omega,\overline{\omega}\}$, where each point is also critical.  The critical portrait of $f$ is:
\begin{align*}
\centerline{\xymatrix{\omega\ar@/^/[d]^{\times 2} &0\ar@(dl,dr)_{\times 2} &1\ar@(dl,dr)_{\times 2}  \\ \overline{\omega}\ar@/^/[u]^{\times 2}}}
\end{align*}
so $f$ is a nearly Euclidean Thurston map \cite{CFPP2}.  Let $\Theta=\{1,\omega,\overline{\omega}\}$ as before.

First a finite subdivision rule for $f$ is presented.  Consider the two big equilateral triangles in Figure \ref{fig:fsubdivision} as subsets of $\mathbb{R}^2$ where both triangles have vertices at $(0,0),(2,0),$ and $(1,\sqrt{3})$.
\begin{figure}[h]
\centerline{\includegraphics[width=130mm]{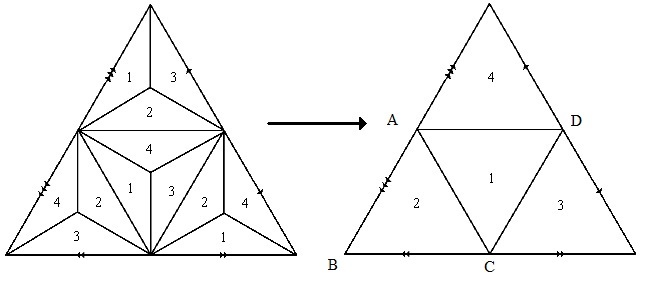}}
\caption{Affine model of $f$}
\label{fig:fsubdivision}
\end{figure}
 Identify the marked edges to form two tetrahedra.  Define the affine model by first mapping the triangle from the domain with vertices $(1,0)$ and $(2,0)$ and label $1$ to the triangle in the range labeled $1$ by the unique affine map that respects the labels of adjacent triangles.  Extend this map by reflection over the whole tetrahedron to produce a three to one map from the tetrahedron to itself with four simple critical points.  To prove that this is actually a model of $f$, explicitly embed this tetrahedron into $\mathbb{R}^3$ isometrically by mapping:
\[A\mapsto (0,0,-\frac{\sqrt{18}}{3}) \hspace{2cm} B\mapsto (-\frac{1}{2},\frac{\sqrt{3}}{2},0)\]
\[C\mapsto (1,0,0) \hspace{2 cm} D\mapsto (-\frac{1}{2},-\frac{\sqrt{3}}{2},0).\]
Circumscribe this regular tetrahedron by a sphere and project radially.  The stereographic projection from this sphere with north pole $N$ at $(0,0,\frac{1}{\sqrt{2}})$ onto the plane $\{(x,y,z):z=0\}$ is conformal.  Identify this plane with the complex plane by the mapping $(x,y,z)\mapsto x+iy$.  The composition of these maps yields a conformal map from the tetrahedron onto $\rs$.  Conjugating the affine model for $f$ by this conformal map yields (up to homotopy) a rational function that has local degree 2 at each point in $P_f$.  This must be $f$ by uniqueness.

The function $f$ has appeared in the literature before: it is shown in \cite{BEKP} that the Thurston pullback map $\sigma_f$ is surjective and has a fixed point of local degree 2.  Up to some non-dynamical equivalence, this is the only known example where the pullback map is surjective.  An image generated by Xavier Buff using the method described in Section \ref{sec:IMG} seems to indicate that under iteration of the extended Thurston pullback map, each boundary point maps to one of three points.  Thus, using ideas from Selinger's work, there is reason to believe that under iterated preimage of $f$, all curves eventually land in the homotopy class of one of three essential curves.

Another feature of $f$ is that it arises as a mating \cite{Mi00} of the cubic polynomial $P$ with two finite fixed critical points and $Q$ which interchanges its two finite critical points.  This follows from a later section where it will be seen that the curve corresponding to $-\frac{1}{1}$ pulls back to itself by degree three in an orientation preserving way, and since $f$ is hyperbolic, it is a mating with equator given by this invariant curve \cite{Me11}. Note that the polynomials $P$ and $Q$ are unique up to affine conjugacy.  This implies that $P$ commutes with the map interchanging its two critical points, and the same holds for $Q$.  It follows that though there are two different ways of identifying the circles at infinity for $P$ and $Q$, the result is $f$ either way.

\subsection{The Correspondence on Moduli Space}

Recall the correspondence on moduli space mentioned in Section \ref{sec:notationanddefns}.  Note that $Y$ is the unique degree 4 rational function that fixes each of $1,\omega,$ and $\overline{\omega}$ with local degree 3, because for another such $W$, deg$(Y-W)\leq 8$, but $Y-W$ has 9 zeros counted with multiplicity.  Though the dynamical properties of $Y$ aren't important here, the map $Y$ was used by McMullen \cite{McM} to give a generally convergent iterative algorithm to solve cubic equations.

We now produce the affine model for $Y$ depicted in Figure \ref{fig:Y}.  The domain of the model is the equilateral triangle with vertices $-2,1\pm\sqrt{3}i$ doubled over its boundary.  The range is the equilateral triangle with vertices $1,\omega,\overline{\omega}$ doubled over its boundary.  Thus the domain is the union of eight equilateral triangles with disjoint interior, where the three outer triangles on the front face are shaded as well as the central triangle on the back face.   The range is the union of two equilateral triangles with disjoint interior, where the back face is shaded. The affine model for $Y$ which we denote by $Y^{\triangle}$ is constructed by mapping the unshaded triangle on the front face of the domain to the front face of the range by the identity.  Extend this map by reflection to the whole domain.  This produces a degree 4 map that is conformal except at the vertices $1,\omega,\overline{\omega}$ which all map by degree 3.  

\begin{figure}[h]
\centerline{\includegraphics[width=110mm]{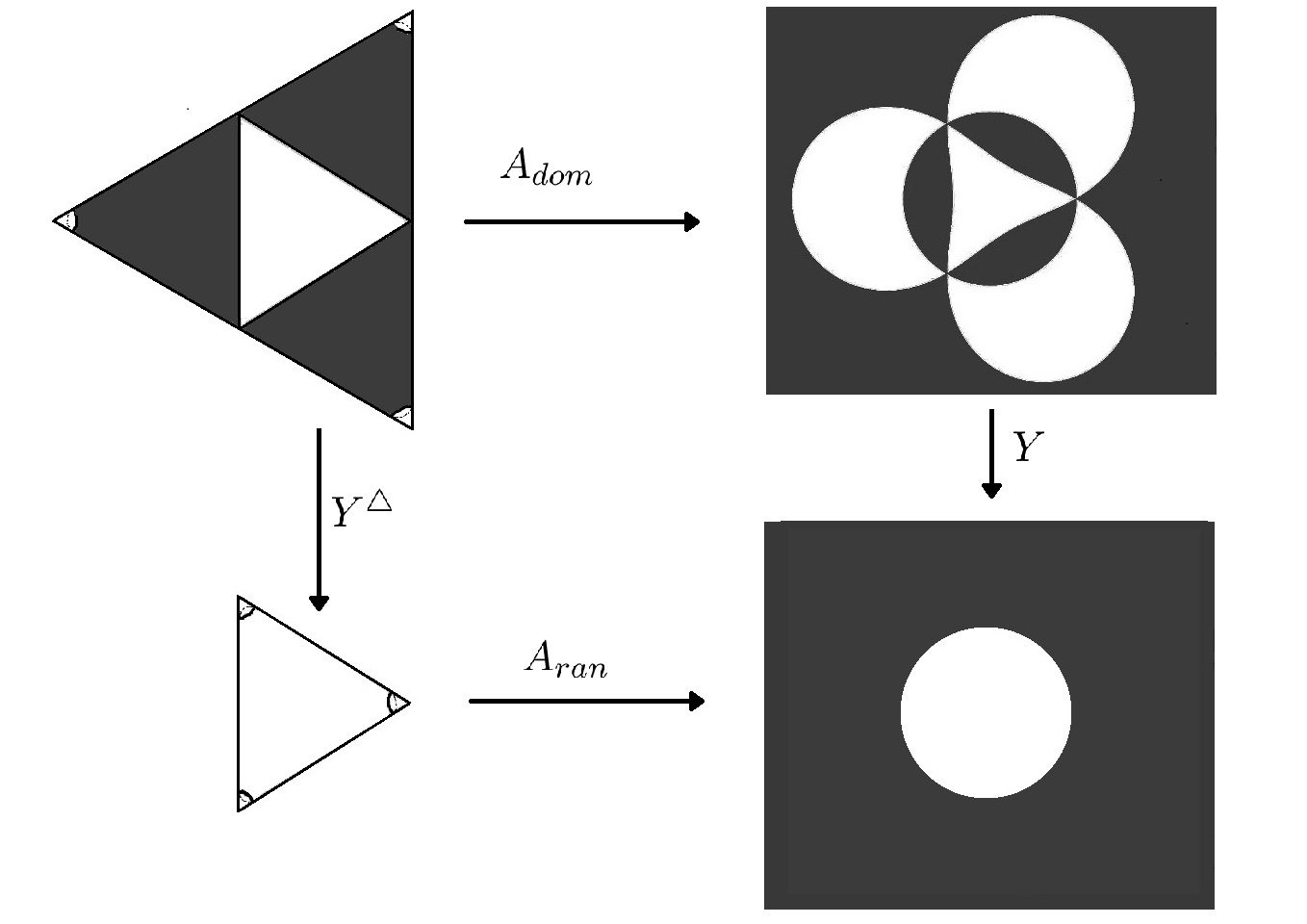}}
\caption{Isomorphisms from domain and range of affine model to $\rs$}
\label{fig:Y}
\end{figure}

Let $\triangle$ be the Euclidean triangle in $\mathbb{C}$ with vertices $1,\omega,$ and $\overline{\omega}$. There is a unique Riemann map $(\triangle,\{1,\omega,\overline{\omega}\})\rightarrow(\mathbb{D},\{1,\omega,\overline{\omega}\})$.  The map $A_{ran}$ in Figure \ref{fig:Y} is the unique isomorphism to $\rs$ defined by applying Schwarz reflection to this Riemann map.  $A_{ran}$ evidently fixes the points $1,\omega,\overline{\omega}$.  There is also a unique Riemann map from the front face of the domain of $Y^{\triangle}$ to the unit disk in $\rs$ that fixes the points $1,\omega,\overline{\omega}$.  The map $A_{dom}$ is the unique isomorphism to $\rs$ defined by applying Schwarz reflection to this Riemann map.  It is clear that $A_{ran}^{-1}\circ Y^{\triangle}\circ A_{dom}^{-1}$ is a holomorphic map on $\rs$ and it must be identically $Y$ by uniqueness.

The discussion of Section \ref{sec:FourPostcriticalPoints} is now specialized to the case where $P_f = \{0,1,\omega,\overline{\omega}\}$.  Fix $\pi$ to be the universal cover $(\mathbb{H},\frac{1+\sqrt{3}i}{2})\rightarrow(\rs\setminus\Theta,0)$ depicted in Figure \ref{fig:cover}.  The deck group is again $\text{P}\Gamma(2)=\langle A,B \rangle$ where $A = \left[
\begin{smallmatrix} 1&0\\ -2&1 \end{smallmatrix}\right]$, $B =\left[
\begin{smallmatrix} 1&2\\ 0&1 \end{smallmatrix}\right]$, and these generators are identified with the generators $\alpha$ and $\beta$ respectively of the fundamental group $\pi_1(\rs\setminus\Theta,0)$ which are also depicted in Figure \ref{fig:cover}.  Pushing the point $0$ in the positive direction along $\alpha$ and $\beta$ yield generators of $\text{PMCG}(\rs,P_f)$ which are denoted $T_\alpha$ and $T_\beta$ respectively.

\subsection{The Virtual Endomorphism and Wreath Recursion on Moduli Space}

The Schreier graph and Reidemeister-Schreier algorithm from Section \ref{subsec:ReidemeisterSchreier} are used to compute the virtual endomorphism on moduli space.  Figure \ref{fig:generatorlifts} exhibits the preimages of the two generators $\alpha$ and $\beta$ under $Y$.  The basepoint is chosen to be the origin in both domain and range.
\begin{figure}[h]
\includegraphics[width=120mm]{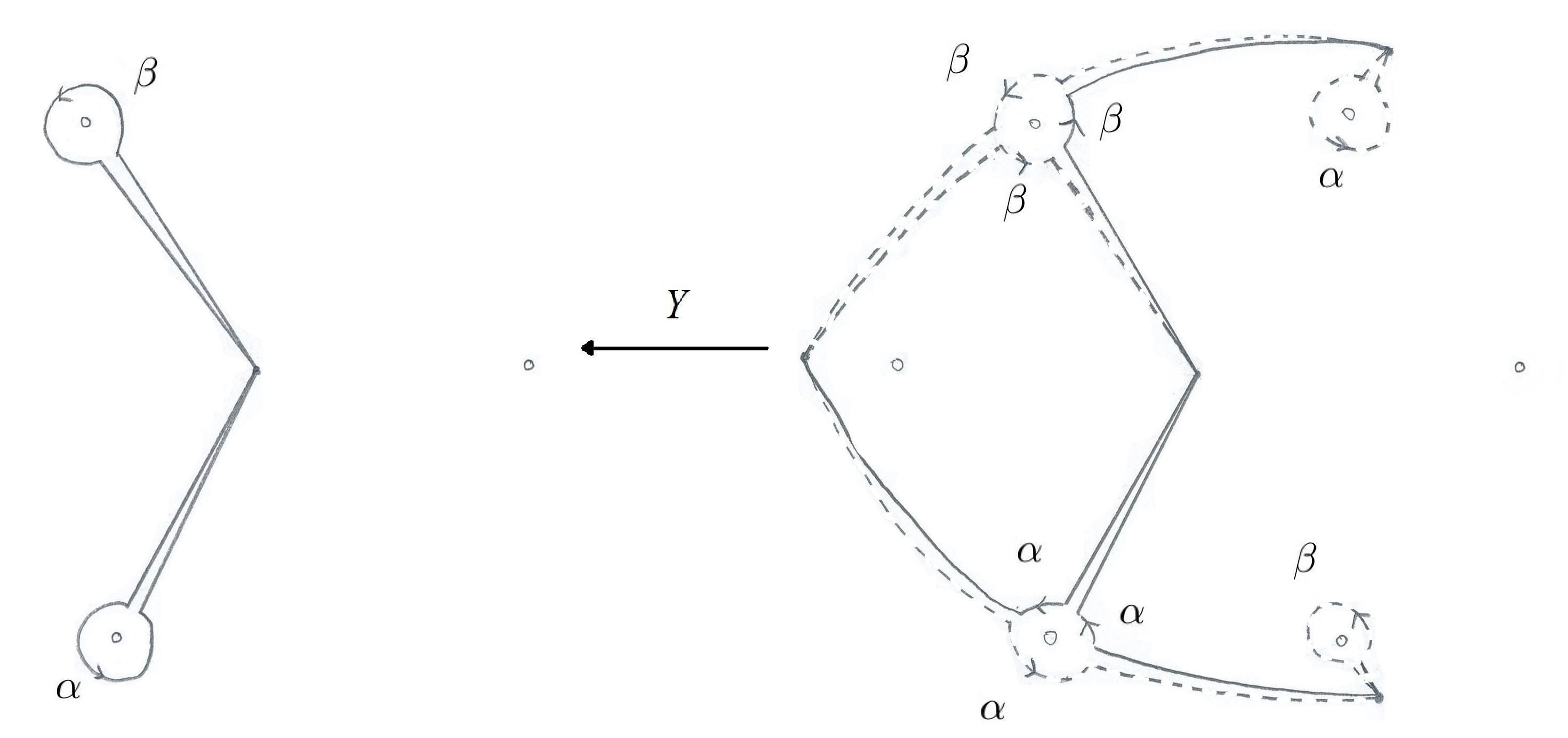}
\caption{ Lift of generators under $Y$; the Schreier graph}
\label{fig:generatorlifts}
\end{figure}
The choice of maximal subtree is indicated by solid lines, and the corresponding Schreier transversal is $T=\{1,\alpha,\alpha^{-1},\beta^{-1}\}$.  From Figure \ref{fig:generatorlifts} and the discussion in \ref{subsec:ReidemeisterSchreier},
\[H=\langle\beta\alpha\beta^{-1},\beta^2\alpha^{-1},\beta^{-1}\alpha^{-1},\alpha^3,\alpha^{-1}\beta\alpha\rangle.\]
To define the virtual endomorphism on moduli space, it is necessary to write an arbitrary $w\in H$ as a word in the five generators of $H$ and their inverses; the Reidemeister-Schreier rewriting process will be used, where $S=\{\alpha,\beta\}$.  Table \ref{Reide} exhibits all necessary values of $\gamma$, where the left column corresponds to the first argument of $\gamma$, and the top row corresponds to the second argument.
\begin{table}
\renewcommand{\arraystretch}{1.25}
\centerline{\begin{tabular}{ | c || c | c | c | c | }
  \hline
  $\gamma(column,row)$&$\alpha$ & $\alpha^{-1}$& $\beta$& $\beta^{-1}$\\ \hline\hline
  $1$&$1$ & $1$& $1$& $\beta^{-1}\alpha^{-1}$\\ \hline
  $\beta$&$\beta\alpha\beta^{-1}$&$\beta\alpha^{-1}\beta^{-1}$ & $\beta^2\alpha^{-1}$& $1$\\ \hline
  $\alpha$&$\alpha^3$&$1$&$\alpha\beta$ & $\alpha\beta^{-2}$\\ \hline
  $\alpha^{-1}$&$1$&$\alpha^{-3}$&$\alpha^{-1}\beta\alpha$ & $\alpha^{-1}\beta^{-1}\alpha$\\ \hline
\end{tabular}}
\caption{All values of $\gamma$ in the Reidemeister-Schreier algorithm}
\label{Reide}
\end{table}
Recall that path multiplication was earlier defined so that for example $\alpha\beta$ is the path obtained by traversing $\alpha$ in the positive direction followed by $\beta$.  The following is a sample computation to show how one would write the word $\alpha\beta^2\alpha^{-1}\beta^{-1}\in H$ in terms of the generators of $H$:
\begin{eqnarray}
\alpha\beta^2\alpha^{-1}\beta^{-1}&=& \gamma(1,\alpha)\cdot\gamma(\overline{\alpha},\beta)\cdot\gamma(\overline{\alpha\beta},\beta)\cdot\gamma(\overline{\alpha\beta^2},\alpha^{-1})\cdot\gamma(\overline{\alpha\beta^2\alpha^{-1}},\beta^{-1})\nonumber\\
&=&\gamma(1,\alpha)\cdot\gamma(\alpha,\beta)\cdot\gamma(1,\beta)\cdot\gamma(\beta,\alpha^{-1})\cdot\gamma(\beta,\beta^{-1})\nonumber\\
&=&1\cdot\alpha\beta\cdot 1\cdot\beta\alpha^{-1}\beta^{-1}\cdot 1\nonumber
\end{eqnarray}

This algorithm describes how to lift elements of the fundamental group under $Y$ based at $0$ but in order to compute $\phi_f:H<\pi_1(\rs\setminus \Theta,0)\rightarrow\pi_1(\rs\setminus \Theta,0)$ as in Section \ref{sec:npoints}, one needs to compute the image of these lifts under $X(z)=z^2$.  It can be shown that $\phi_f$ behaves on generators as follows:
\[\beta\alpha\beta^{-1}\longmapsto\beta\]
\[\beta^2\alpha^{-1}\longmapsto\beta^{-1}\]
\[\beta^{-1}\alpha^{-1}\longmapsto\alpha^{-1}\beta^{-1}\]
\[\alpha^3\longmapsto\beta\]
\[\alpha^{-1}\beta\alpha\longmapsto\alpha.\]
The whole process of computing $\phi_f(w)$ for some word $w=s_1s_2...s_k\in H$ where $s_i\in S\cup S^{-1}$ is streamlined in Figure \ref{fig:VEmachine}.  The starting state is the vertex labeled 1.  Let $j=1$, and the output string is initially empty.

If $s_j\in S$, the new state is determined by following the arrow with the group element $s_j$ in the first coordinate of the label; append the contents of the second coordinate to the right of the output string.  If $s_j\in S^{-1}$, the new state is determined by following the arrow with $s_j^{-1}$ in the first coordinate; append the inverse of the second coordinate to the right of the output string.  Increment $j$ and repeat the process described in this paragraph until the whole input string $w$ is consumed.  Upon completion, the output string is precisely $\phi_f(w)$.  For example, the diagram gives that 
\begin{eqnarray}
\phi_f(\alpha^2\beta^{-1}\alpha^{-1}\beta^{-1}\alpha^2\beta^{-1})&=& 1 \cdot\beta\cdot\alpha^{-1}\cdot\beta^{-1}\cdot\beta\cdot\beta\cdot\beta\cdot 1\nonumber\\
&=& \beta\alpha^{-1}\beta^2.\nonumber
\end{eqnarray}
\begin{figure}[h]
\centerline{\includegraphics[width=70mm]{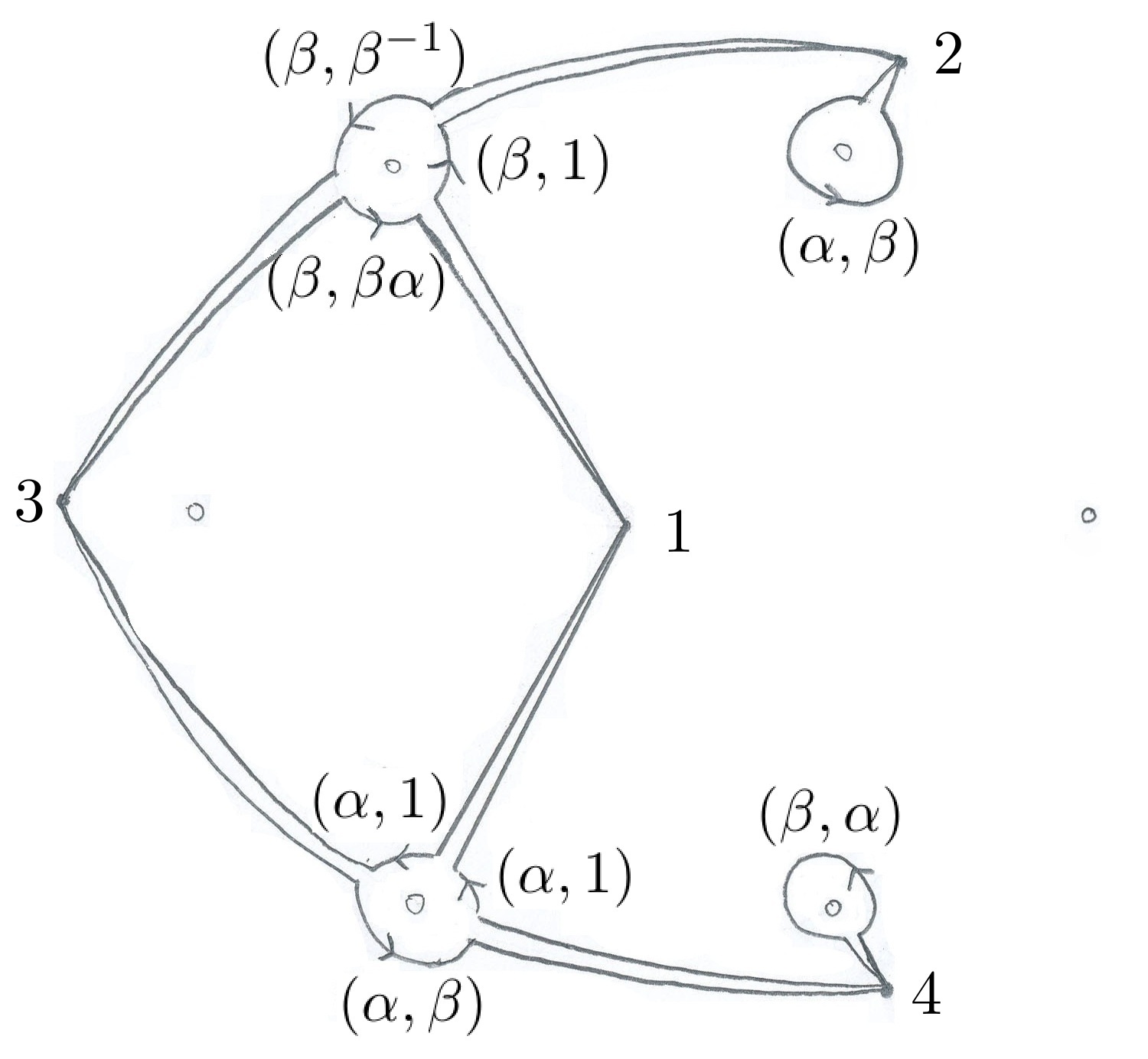}}
\caption{Machine to compute the virtual endomorphism $\phi_f$ drawn in $\mathcal{W}_f=\rs\setminus\Theta'$}
\label{fig:VEmachine}
\end{figure}

The contraction of wreath recursions have been used in the past to better understand the virtual endomorphism on moduli space \cite[Thm 1.4]{Pil10}.  We define a wreath recursion on $\pi_1(\rs\setminus\Theta)$ but show that it is not contracting.
\begin{figure}[h]
\centerline{\includegraphics[width=60mm]{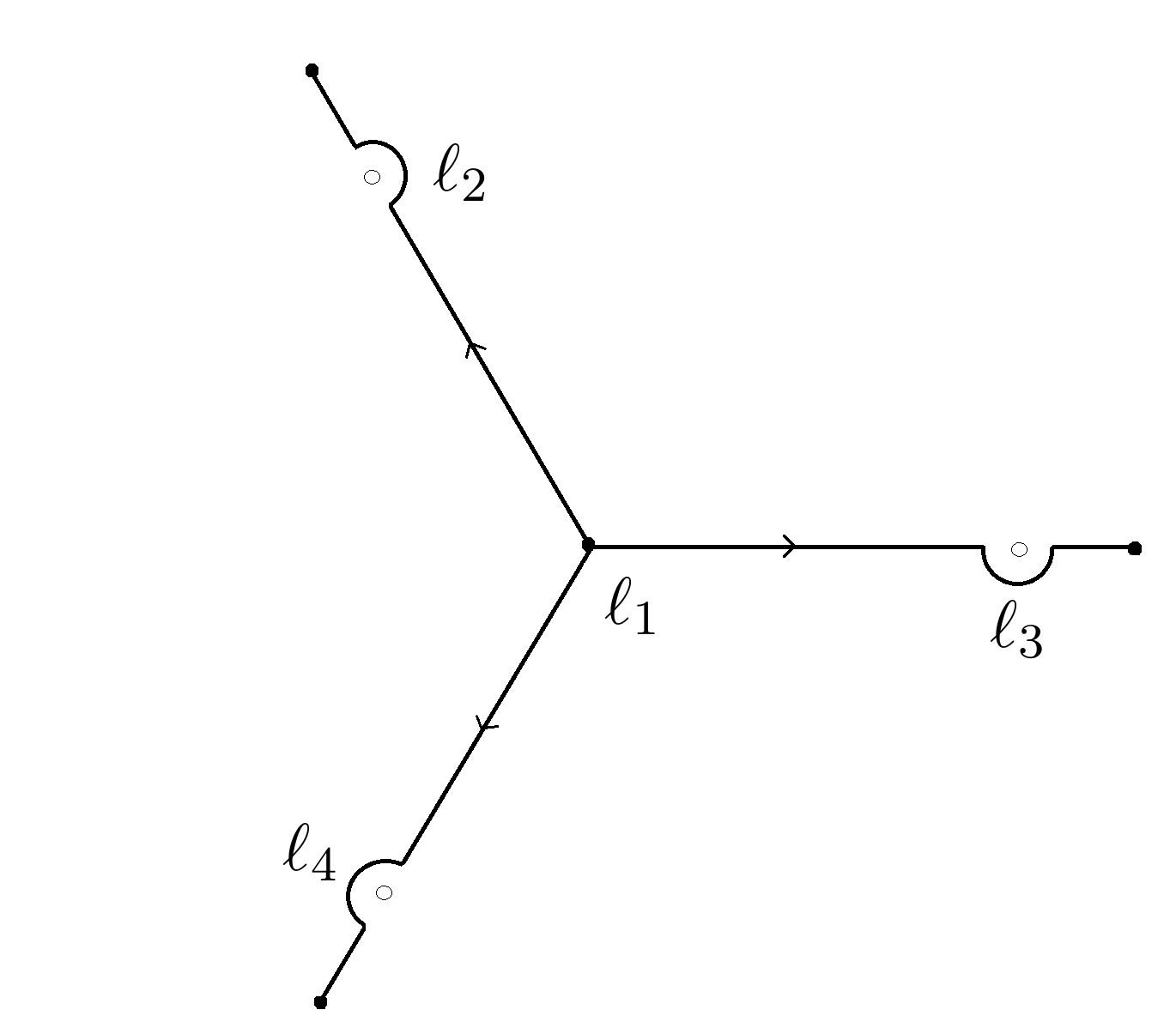}}
\caption{Connecting paths for the wreath recursion $\Phi$ drawn in $\rs\setminus\Theta$}
\label{fig:connectingpaths}
\end{figure}
The three small circles in Figure \ref{fig:connectingpaths} denote the points in $\Theta$, and the four dots correspond to the points in $X\circ Y^{-1}(0)=\{0,\sqrt[3]{4},\sqrt[3]{4}\omega,\sqrt[3]{4}\overline{\omega}\}$ which are labeled 1,3,2, and 4 respectively.  The connecting path $\ell_1$ is the constant path at $0$.  The path $\ell_3$ runs from $0$ to $\sqrt[3]{4}$ along the positive real axis, except in a small neighborhood of $1+0i$ where the path goes into the lower half-plane along a small semi-circular arc.  The other paths are $\ell_2=\omega\cdot\ell_3$ and $\ell_4=\overline{\omega}\cdot\ell_3$.  Given this choice of connecting paths,
\[\Phi(\beta)=\langle\langle \beta^{-1},\beta\alpha,1,\alpha \rangle\rangle(\,1\quad 2\quad 3\,)\]
\[\Phi(\alpha)=\langle\langle \beta\alpha, \beta, \alpha^{-1}, 1 \rangle\rangle(\,1\quad 3\quad 4\,).\]
Direct computation shows that
\[\Phi((\beta\alpha)^{3n})=\langle\langle (\beta\alpha)^n,(\beta\alpha)^n,(\beta\alpha)^{3n},(\alpha\beta)^n \rangle\rangle id.\]
Lemma 2.11.2 in \cite{Nek} implies that a candidate nucleus $\mathcal{N}$ for this wreath recursion must contain $(\beta\alpha)^{3n}$ for all $n$ since $((S\cup\mathcal{N})^2)|_{X^k}\subset\mathcal{N}$ for all $k\in \mathbb{N}$.  But since $\mathcal{N}$ would be infinite, the wreath recursion $\Phi$ is not contracting.

\subsection{The Wreath Recursion on the Dynamical Plane}

To compute the wreath recursion on the dynamical plane, let the basepoint be the unique real fixed point of $f$ that lies between 0 and 1.  The choice of four generators $\alpha,\beta,\gamma,$ and $\delta$ of $\pi_1(\rs\setminus P_f)$ is presented in Figure \ref{fig:fgeneratorlifts}, where $\alpha\gamma\beta\delta$ is trivial.   A choice of connecting paths $\ell_1,\ell_2,\ell_3$ is also made, where $\ell_2$ is the constant path at the basepoint.  The endpoint of each path $\ell_1,\ell_2,\ell_3$ is labeled $1,2,3$ respectively.  With these choices, the wreath recursion on the dynamical plane is
\begin{align*}
&\Phi_f(\alpha)=\langle\langle e,e,\beta \rangle\rangle (\,1\quad3\,)\\
&\Phi_f(\beta)=\langle\langle \beta^{-1},e,\gamma^{-1}\delta^{-1} \rangle\rangle (\,1\quad3\,)\\
&\Phi_f(\gamma)=\langle\langle e,\gamma,e \rangle\rangle (\,2\quad3\,)\\
&\Phi_f(\delta)=\langle\langle \delta,e,e \rangle\rangle (\,1\quad2\,).
\end{align*}

\begin{figure}[p]
\includegraphics[width=180mm]{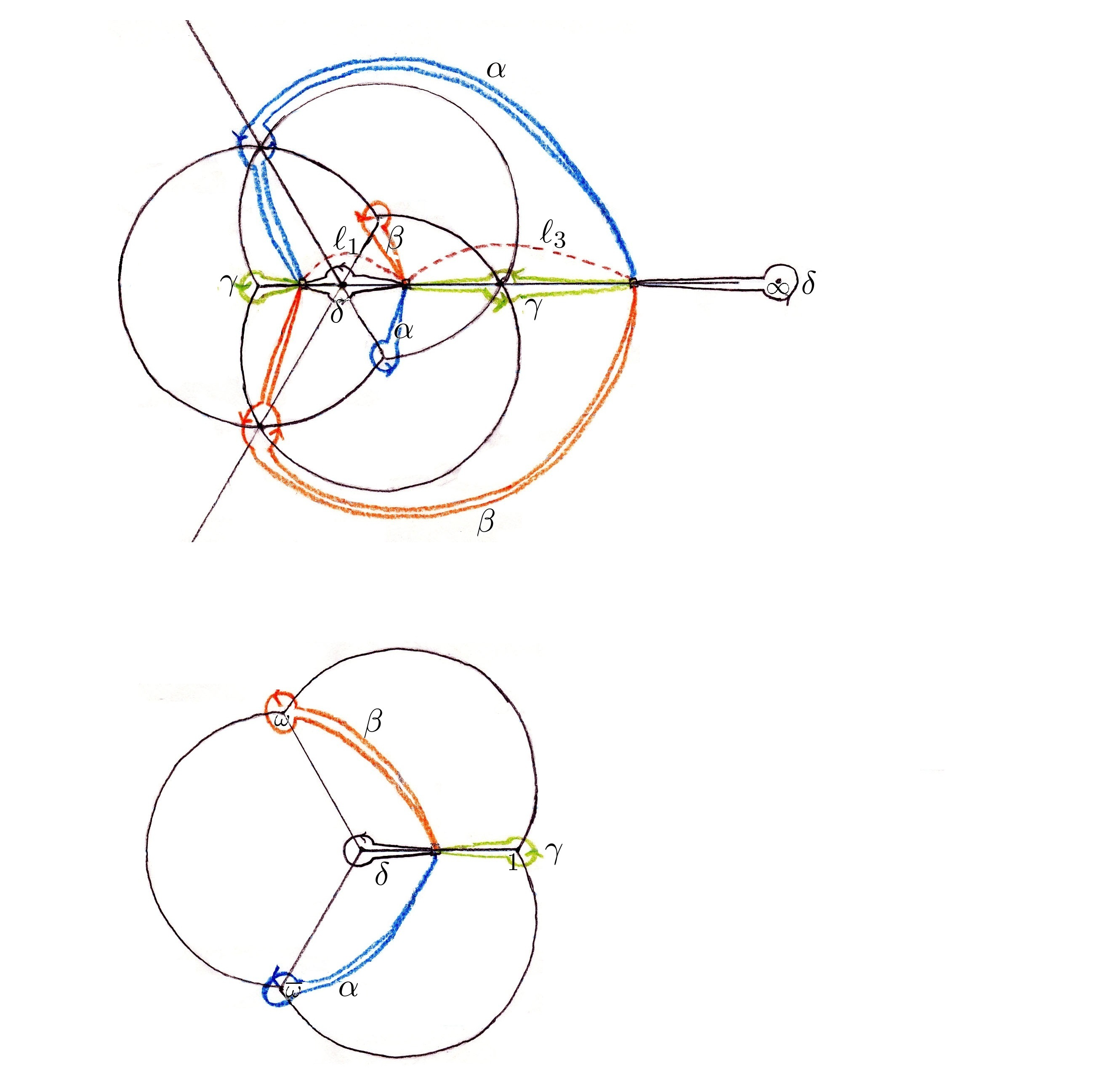}
\caption{Choice of connecting paths and lift of generators under $f$}
\label{fig:fgeneratorlifts}
\end{figure}

\end{comment}

\section{Boundary Values of $\sigma_f$}
\label{SigmaFGA}

In this section, we analyze the behavior of the extended Thurston pullback map $\sigma_f$ on the Weil-Petersson boundary of Teichm\"uller space for $f$.

\subsection{The Boundary Maps to the Boundary}

Denote the Weil-Petersson completion of Teichm\"uller space by $\overline{\mathscr{T}}_f$, and let $\partial\mathscr{T}_f$ denote the Weil-Petersson boundary.  We first show that the extended Thurston pullback map $\overline{\sigma}_f:\overline{\mathscr{T}}_f\rightarrow\overline{\mathscr{T}}_f$ has the property that $\overline{\sigma}_f(\partial\mathscr{T}_f)\subset\partial\mathscr{T}_f$.  This is accomplished by showing that the preimage under $f$ of an essential curve in $\rs\setminus P_f$ has an essential component.

To compute the preimages of essential curves in $\rs\setminus P_f$ under $f$, first identify such a curve with a parabolic element of $\pi_1(\rs\setminus\Theta,0)$ as discussed in Section \ref{sec:FourPostcriticalPoints}.  This element may not lie in $H$, so an appropriate power must be taken to ensure this.  Then the discussion in Section \ref{subsec:DefinitionsEndomorphisms} indicates that one should lift this element by $Y$ based at 0, and push it down by $X$.  This section is devoted to understanding this process.  For convenience the notation $\gamma=\alpha^{-1}\beta^{-1}$ is used.

\begin{lemma}\label{lem:nonperipheral}
Let $g\in\{w\alpha^n w^{-1}, w\beta^n w^{-1}, w\gamma^n w^{-1}\}\cap H$ where $n\in\mathbb{Z},w\in\pi_1(\rs\setminus\Theta,0).$  Then $X_*(Y^{-1}(g)[0])\neq 1$.
\end{lemma}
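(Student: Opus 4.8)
The quantity $X_*(Y^{-1}(g)[0])$ is by construction the value $\phi_f(g)$ of the virtual endomorphism on moduli space, so the claim is that $\phi_f$ kills no nontrivial parabolic element of $\pi_1(\rs\setminus\Theta,0)$; note that $n\neq 0$ is implicit in the hypothesis, since for $n=0$ one would have $g=1$ and hence $X_*(Y^{-1}(g)[0])=1$. Rather than feed words through the automaton of Figure~\ref{fig:VEmachine}, the plan is to exploit holomorphy: both $Y$ and $X(z)=z^2$ send punctures to punctures, so they carry peripheral conjugacy classes to peripheral conjugacy classes, and a peripheral element of a free group of a punctured sphere is nontrivial precisely when its winding number about the puncture is nonzero. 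Since $g=wx^nw^{-1}$ with $x\in\{\alpha,\beta,\gamma\}$, set $\widetilde w:=Y^{-1}(w)[0]$ with endpoint $p$; then
\[Y^{-1}(g)[0]=\widetilde w\cdot\bigl(Y^{-1}(x^n)[p]\bigr)\cdot\overline{\widetilde w},\]
and because $g\in H$ the total lift is a loop, which (lifts being unique) forces the middle factor $Y^{-1}(x^n)[p]$ to close up at $p$. Thus $Y^{-1}(g)[0]$ is identified, via the change of basepoint along $\widetilde w$, with a closed $Y$-lift of $x^n$, and it suffices to show that every such closed lift is conjugate in $\pi_1(\rs\setminus\Theta',0)$ to a nonzero power of a peripheral generator.

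For that, recall $\Theta'=Y^{-1}(\Theta)$ and that $Y$ restricts to an unbranched degree-$4$ covering $\rs\setminus\Theta'\to\rs\setminus\Theta$; a direct computation of the fibres over the three branch values shows the ramification over each puncture of $\rs\setminus\Theta$ is $3+1$ (e.g.\ $Y^{-1}(1)=\{1,-1\}$ with local degrees $3$ and $1$, and likewise $Y^{-1}(\omega)=\{\omega,-\omega\}$, $Y^{-1}(\bar\omega)=\{\bar\omega,-\bar\omega\}$). Since $x^n$ is freely homotopic to a loop winding $n$ times about one puncture of $\rs\setminus\Theta$, its full $Y$-preimage is a disjoint union of loops, each supported in a punctured-disk neighborhood of a puncture of $\rs\setminus\Theta'$ and winding either $n$ or $n/3$ times about that puncture. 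As $n\neq 0$, any closed component — in particular the one appearing in the factorization above — is conjugate to a nonzero peripheral power, hence nontrivial.

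Finally, push forward by $X$. The branch values of $X(z)=z^2$ are $0$ and $\infty$, which do not lie in $\Theta'$, so $X$ is a local biholomorphism near every puncture of $\rs\setminus\Theta'$ and sends the peripheral generator about $\zeta\in\Theta'$ to a conjugate of the peripheral generator about $\zeta^2\in\Theta$. Therefore $X_*$ carries the nonzero peripheral power $Y^{-1}(g)[0]$ to a conjugate of a nonzero peripheral power of a generator of $\pi_1(\rs\setminus\Theta,0)$; this is again parabolic (with winding number $n$ or $n/3$) and is nonzero, so $X_*(Y^{-1}(g)[0])\neq 1$. The one genuinely delicate point is the bookkeeping in the first step: one must track the \emph{based} lift $Y^{-1}(g)[0]$ and not merely some component of the preimage of the free homotopy class of $g$, and it is exactly the factorization $g=wx^nw^{-1}$ that legitimizes that reduction. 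As a sanity check one can instead read off from Figure~\ref{fig:VEmachine} that $\phi_f$ sends each of the five generators $\beta\alpha\beta^{-1},\beta^2\alpha^{-1},\beta^{-1}\alpha^{-1},\alpha^3,\alpha^{-1}\beta\alpha$ of $H$ to a nontrivial word, but the topological argument is cleaner and shows in addition that $\phi_f$ preserves the class of parabolic elements.
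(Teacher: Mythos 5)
Your argument is correct and is essentially the paper's own proof, which is given only as a three-line topological sketch: $g$ is peripheral, hence $Y^{-1}(g)[0]$ is peripheral in $\rs\setminus\Theta'$, and $X$ carries peripheral curves to peripheral curves. You have merely filled in the details the paper omits — why the based lift closes up after conjugating by $\widetilde w$, the ramification $3+1$ of $Y$ over each puncture, and the observation that $X$ is unbranched on $\Theta'$ — but the route is the same.
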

\begin{proof}
We sketch a topological proof of this fact.  Note that $g$ is freely homotopic to a peripheral curve about some point $p\in\Theta$.  Thus $Y^{-1}(g)[0]$ is peripheral in $\rs\setminus\Theta'$.  The function $X(z)=z^2$ maps peripheral curves in $\rs\setminus\Theta'$ to peripheral curves in $\rs\setminus\Theta$.\qed
\end{proof}

The fact that $\overline{\sigma}_f(\partial\mathscr{T}_f)\subset\partial\mathscr{T}_f$ is a consequence of this lemma and the following argument.  Let $\Gamma$ be an essential curve in $\rs\setminus P_f$.  Denote by $S_{\Gamma}$ the stratum in the Weil-Petersson boundary that corresponds to collapsing $\Gamma$ to a point, and let $S_{f^{-1}(\Gamma)}$ be the stratum corresponding to the essential pre-image of $\Gamma$.  Selinger showed that $\sigma_f(S_{\Gamma})\subset S_{f^{-1}(\Gamma)}$ \cite[p. 590]{Sel}.  We must then show that every such $\Gamma$ in $\rs\setminus P_f$ has an essential preimage.  Let $T_\Gamma$ be the Dehn twist with core curve $\Gamma$.  This is identified with a parabolic element of $\pi_1(\rs\setminus\Theta,0)$ whose cube can be lifted under $Y$ by examination of Figure \ref{fig:generatorlifts}.  Lemma \ref{lem:nonperipheral} demonstrates that the virtual endomorphism on moduli space maps parabolic elements to parabolic elements, and the image parabolic element is equivalent to a Dehn twist that fixes an essential curve that is precisely the essential component of $f^{-1}(\Gamma)$.  Therefore the Weil-Petersson boundary is mapped to itself.

\subsection{Dynamical behavior of $\phi_f$}
The motivation for the following theorem is that the dynamical behavior of $\phi_f$ applied to (powers of) parabolic elements describes the iterative boundary behavior of the extended Thurston pullback map.   For any $k\in\mathbb{Z}$ and $g\in G$, use the standard notation $g^{k\cdot w}=w^{-1}g^kw$ where one should recall that as a path, $w^{-1}$ is traversed first.  Though every parabolic element can be written in the form $\alpha^{n\cdot w}, \beta^{n\cdot w},$ or $\gamma^{n\cdot w}$, it becomes useful to consider words of the form $\delta^{n\cdot w}$ with $\delta=\beta^{-1}\alpha^{-1}$ to simplify many of the following statements.  Note that $\delta^{\beta^{-1}}=\gamma=\alpha^{-1}\beta^{-1}$.  
In this section the subscript on $\phi_f$ is suppressed.  Recall that $G$ denotes $\pi_1(\rs\setminus\Theta,0)$.  The word length of $g\in G$ with respect to the generating set $S=\{\alpha,\beta\}$ is denoted by $|g|$.

\begin{theorem}  \label{thm:decreasingconjugators}
Let $w\in G$ and $x\in\{\alpha,\beta,\gamma\}$.  Then there is some $k\in\mathbb{N}$ and an appropriate choice of $n\in\mathbb{N}$ so that:
\[\phi^{\circ k}(x^{n\cdot w})\in\{x^m|m\in\mathbb{N}\}.\]

\end{theorem}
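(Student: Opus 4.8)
For an appropriate $n$ the element $x^{n\cdot w}=w^{-1}x^n w$ lies in $H$ and, as in Section~\ref{sec:FourPostcriticalPoints}, represents a parabolic element of $\text{P}\Gamma(2)$ fixing the point $\overline{w}.\,*\in\Qbar$, where $*\in\{\frac{0}{1},\frac{1}{0},-\frac{1}{1}\}$ is the orbit--transversal representative attached to $x$. We may take $w$ to be the canonical reduced word supplied by the even--continued--fraction machine of Section~\ref{sec:FourPostcriticalPoints}, so that its length $|w|$ --- equivalently, the number of partial quotients --- measures the complexity of $\overline{w}.\,*$. Since $H$ and each iterated domain $\mathrm{dom}\,\phi^{\circ k}$ has finite index in $G$, for every fixed $k$ there is a positive $n$ with $x^{n\cdot w}\in\mathrm{dom}\,\phi^{\circ k}$; choosing $n$ in the right residue class (and the representatives appropriately) also keeps the exponents that occur positive, so this bookkeeping is routine. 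The plan is to show that iterating $\phi$ forces $w$ down to the empty word, after which the base $x$ recurs.

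\textbf{How $\phi$ acts on parabolic elements.} By Lemma~\ref{lem:nonperipheral}, $\phi$ carries peripheral classes to peripheral classes with $\phi(g)\ne 1$ for such $g$; being a homomorphism on $H$, it thus sends the parabolic $x^{n\cdot w}$ to a nontrivial parabolic element, necessarily of the form $y^{m\cdot w'}$ with $y\in\{\alpha,\beta,\gamma\}$ and $w'\in G$. The explicit images of the generators of $H$ --- among them $\alpha^3\mapsto\beta$, $\alpha^{-1}\beta\alpha\mapsto\alpha$, $\beta^{-1}\alpha^{-1}\mapsto\alpha^{-1}\beta^{-1}$ --- together with the facts that $f$ is nearly Euclidean (so that the pullback on slopes reduces, modulo $2$, to the involution swapping the parity classes of $\frac{0}{1}$ and $\frac{1}{0}$ and fixing that of $-\frac{1}{1}$) and that the curve for $-\frac{1}{1}$ pulls back to itself (Section~\ref{sec:dynamicalPlaneFacts}), show that $y$ is determined by $x$ via $\alpha\leftrightarrow\beta$, $\gamma\mapsto\gamma$. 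Hence the orbit stays inside one of the two $\phi$--invariant families $\{\alpha,\beta\}$, $\{\gamma\}$, and once the conjugator is trivial the original base reappears after at most two further applications of $\phi$ (for $x=\gamma$ immediately; for $x\in\{\alpha,\beta\}$ by choosing $k$ even).

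\textbf{The inductive step.} It remains to show that for $w\ne 1$ a single application of $\phi$ yields $y^{m\cdot w'}$ with $|w'|<|w|$; the theorem then follows by induction on $|w|$ (applying $\phi$ twice at a time when $x\in\{\alpha,\beta\}$, so that the base stays equal to $x$), the base case $w=1$ being the paragraph above. I would prove the decrease by a finite computation driven by the finite--state machine of Figure~\ref{fig:VEmachine} (equivalently, the Reidemeister--Schreier rewriting of Table~\ref{Reide}): on the input $w^{-1}x^n w$, the output over the prefix $w^{-1}$ depends only on $w^{-1}$, the output over the long block $x^n$ is eventually periodic, and the output over the suffix $w$ depends only on the state reached and on $w$; after free reduction the result has the shape $(w')^{-1}y^m w'$, and over the finitely many possibilities for the terminal syllable(s) of $w$ one reads off $|w'|\le|w|-1$. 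In the language of continued fractions, $\sigma_f$ strips off (or absorbs) the last partial quotient of the even continued fraction expansion of $\overline{w}.\,*$.

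\textbf{Expected obstacle.} The crux is precisely this strict decrease, and the difficulty is that one cannot simply invoke a contraction property of $\phi$: the moduli--space wreath recursion is \emph{not} contracting, since $\Phi\big((\beta\alpha)^{3n}\big)$ has $(\beta\alpha)^{3n}$ among its restrictions. The saving observation --- which the argument must make precise --- is that this failure of contraction lives in the exponent, not the conjugator: $(\beta\alpha)^{3n}=\gamma^{-3n}$ is already an unconjugated power, and $\phi$ only divides its exponent by $3$, so the conjugator length can still strictly decrease. Verifying that no analogous family survives --- no family of conjugators of non--decreasing length --- is exactly the content of closing the case analysis on how lifting by $Y$ and pushforward by $X(z)=z^2$ act on the terminal syllables of $w$, and this is the real work of the proof.
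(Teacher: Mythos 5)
Your outline reproduces the overall shape of the paper's argument (encode a parabolic as a base plus conjugator, track how $\phi$ transforms the conjugator via a finite-state machine, and argue that the conjugator eventually disappears), but the central inductive step as you state it is false, and that falsity is exactly where the paper has to work.

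You claim that for $w\ne 1$ a \emph{single} application of $\phi$ sends $x^{n\cdot w}$ to $y^{m\cdot w'}$ with $|w'|<|w|$. This is wrong. In the paper's notation, the conjugator evolves by the function $\overline{\phi}$ of Lemma~\ref{lem:phiFormulas}, and Lemma~\ref{lem:phiBar} shows that $|\overline{\phi}(w)|$ \emph{fails} to decrease on the set $\mathcal{B}_{\overline{\phi}}=\{(\alpha\beta)^k\}\cup\{\beta(\alpha\beta)^k,k\ge 0\}$: it is length-preserving on $(\alpha\beta)^k$ and actually length-\emph{increasing} on $\beta(\alpha\beta)^k$. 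Moreover, in the ``base $\beta$, $w\in\alpha^{-1}H$'' branch of Lemma~\ref{lem:phiFormulas} the new conjugator is $\beta^{-1}\overline{\phi}(w)$, which adds one more letter. So a single application of $\phi$ does not give a monotone descent, and induction on $|w|$ does not close.

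Your closing paragraph gestures at this by saying one must show ``no family of conjugators of non-decreasing length'' survives, but the right statement is the opposite: such families \emph{do} exist (they are exactly $\mathcal{B}_{\overline{\phi}}$), and the work of the proof is to show that landing in $\mathcal{B}_{\overline{\phi}}$ is transient. Concretely, the paper's Theorem~\ref{thm:decreasingconjugators} proof decomposes into cases by base $\alpha,\beta,\gamma,\delta$ and by whether the current conjugator lies in $\mathcal{B}_\phi$, in $\mathcal{B}_{\overline{\phi}}\setminus\mathcal{B}_\phi$, or neither, and verifies by hand (using identities like $\overline{\phi}((\alpha\beta)^k)=(\beta\alpha)^k$ and $\alpha^{*(\alpha\beta)^k}=\alpha^{*\beta(\alpha\beta)^{k-1}}$, which shift between bad sets or allow a cancellation against the base) that the \emph{second} iterate does strictly shorten the conjugator. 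You also misattribute the obstruction to the exponent: the non-contraction witnessed by $\Phi((\beta\alpha)^{3n})$ is indeed about exponents, but the non-contraction relevant to the theorem is Lemma~\ref{lem:phiBar}'s failure on the \emph{conjugator}, and these are different phenomena. To repair your argument you need the analogues of Lemmas~\ref{lem:decreasingPhi} and~\ref{lem:phiBar} (identifying $\mathcal{B}_\phi$ and $\mathcal{B}_{\overline{\phi}}$ precisely) and the two-step descent on the exceptional set; without them the induction has no valid base for the recursion.
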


Put informally, iteration of $\phi$ will always eliminate the conjugator $w$ of a parabolic element. The first step in the proof is to write formulas that describe the effect of $\phi$ on parabolic elements.  Define the function $\overline{\phi}:\pi_1(\rs\setminus\Theta,0)\rightarrow\pi_1(\rs\setminus\Theta,0)$ as follows:

\begin{equation*}
\overline{\phi}(w) = \begin{cases}
\phi(w) & w\in H \\
\phi(\beta w) & w\in \beta^{-1}H\\
\phi(\alpha^{-1} w) & w \in \alpha H\\
\phi(\alpha w) & w \in \alpha^{-1}H\\
\end{cases}
\end{equation*}

\begin{lemma}\label{lem:phiFormulas}
For any $w\in G$ and $n\in\mathbb{Z}$, there exists $k\in\{1,3\}$ so that:

\begin{align}
\phi(\alpha^{3n\cdot w}) &= (\beta^{k\cdot n})^{\overline{\phi}(w)}\\\nonumber\\
\phi(\beta^{3n\cdot w}) &= \begin{cases}
(\alpha^{k\cdot n})^{\overline{\phi}(w)} & w\in H\cup\beta^{-1}H\cup\alpha H\\
(\alpha^{n})^{\beta^{-1}\overline{\phi}(w)} & w\in\alpha^{-1}H\\
\end{cases}\\\nonumber\\
\phi(\gamma^{3n\cdot w}) &= \begin{cases}
(\gamma^{k\cdot n})^{\phibar(w)} & w\in H\cup\alpha^{-1}H\\
(\delta^{n})^{\phibar(w)} & w\in\beta^{-1}H\cup\alpha H\\
\end{cases}\\\nonumber\\
\phi(\delta^{3n\cdot w})&=(\gamma^{k\cdot n})^{\phibar(w)}\\\nonumber
\end{align}
\end{lemma}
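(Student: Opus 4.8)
Recall that $\phi$ is a homomorphism on $H$ whose values on the five Schreier generators $\beta\alpha\beta^{-1},\beta^2\alpha^{-1},\beta^{-1}\alpha^{-1},\alpha^3,\alpha^{-1}\beta\alpha$ were recorded above, that $\overline{\phi}$ extends $\phi$ to $G$ by prepending the appropriate letter from $\{1,\beta,\alpha^{-1},\alpha\}$, and that the whole map is computed mechanically by the automaton of Figure \ref{fig:VEmachine} (equivalently by the Reidemeister--Schreier rewriting of Table \ref{Reide}). The proof is a finite computation, organized in two stages. First I would note that $\phi(x^{3n\cdot w})$ is always defined: each of $\alpha,\beta,\gamma,\delta$ is freely homotopic to a small loop around one of the three punctures of $\mathscr{M}_f$, so its image under the monodromy representation $\rho\colon G\to S_4$ of $Y$ is a $3$-cycle (over each puncture $Y$ has one point of local degree $3$ and one simple point); hence $\rho(x^{3n})=\mathrm{id}$, so $x^{3n}\in\ker\rho\subseteq H$, and therefore $x^{3n\cdot w}=w^{-1}x^{3n}w\in\ker\rho\subseteq H$ for every $w\in G$.

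The heart of the argument is a reduction to finitely many base cases. Write $w=uh$ with $h\in H$ and $u\in\{1,\beta^{-1},\alpha,\alpha^{-1}\}$ the representative of the left coset $uH$ containing $w$. Then $x^{3n\cdot w}=(x^{3n\cdot u})^{h}$, and since both $x^{3n\cdot u}$ and $h$ lie in $H$ and $\phi$ is a homomorphism on $H$,
\[
\phi(x^{3n\cdot w})=\phi(x^{3n\cdot u})^{\phi(h)}.
\]
Comparing $h=u^{-1}w$ with the four clauses in the definition of $\overline{\phi}$ shows $\phi(h)=\phibar(w)$ in every case, so the lemma follows once the sixteen elements $\phi(x^{3\cdot u})$ ($x\in\{\alpha,\beta,\gamma,\delta\}$, $u\in\{1,\beta^{-1},\alpha,\alpha^{-1}\}$) are known: indeed $x^{3n\cdot u}=(x^{3\cdot u})^{n}$, so $\phi(x^{3n\cdot u})=\phi(x^{3\cdot u})^{n}$.

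Each of these sixteen base elements is computed by running the automaton of Figure \ref{fig:VEmachine} on the reduced word representing $x^{3\cdot u}=(x^{u})^{3}$ and applying $\phi$ on generators. In every case the outcome has the form $(y^{k})^{v}$ with $y\in\{\alpha,\beta,\gamma,\delta\}$, $k\in\{1,3\}$, and $v\in\{1,\beta^{-1}\}$, whence $\phi(x^{3n\cdot u})=(y^{kn})^{v}$. One finds: for $x=\alpha$ always $y=\beta$ and $v=1$; for $x=\beta$, one gets $y=\alpha$, $v=1$ when $u\in\{1,\beta^{-1},\alpha\}$, and $y=\alpha$, $v=\beta^{-1}$, $k=1$ when $u=\alpha^{-1}$; for $x=\gamma$, one gets $y=\gamma$, $v=1$ when $u\in\{1,\alpha^{-1}\}$, and $y=\delta$, $v=1$ when $u\in\{\beta^{-1},\alpha\}$ (passing between the $\gamma$-- and $\delta$--bases via $\delta^{\beta^{-1}}=\gamma$); and for $x=\delta$ always $y=\gamma$, $v=1$. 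Substituting these into the displayed reduction and absorbing $v$ into the conjugator, e.g. $\bigl((\alpha^{n})^{\beta^{-1}}\bigr)^{\phibar(w)}=(\alpha^{n})^{\beta^{-1}\phibar(w)}$, yields exactly the four formulas in the statement.

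\textbf{Main obstacle.} The real work is the sixteen Reidemeister--Schreier rewritings; everything around them is bookkeeping. The delicate points are keeping the left-coset accounting consistent with the definition of $\overline{\phi}$ (which prefix corresponds to which coset), respecting the path-multiplication convention (leftmost path traversed first), which fixes the order of letters in each rewriting and is the usual source of inversion errors, and tracking the conversions between the $\gamma$--base and the $\delta$--base through $\delta^{\beta^{-1}}=\gamma$, which is precisely where the stray $\beta^{\pm1}$ twists in the last two formulas enter.
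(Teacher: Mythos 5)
Your proof is correct and follows the same route as the paper's: decompose $w=uh$ with $u\in\{1,\beta^{-1},\alpha,\alpha^{-1}\}$ a coset representative and $h\in H$, use the homomorphism property of $\phi$ to write $\phi(x^{3n\cdot w})=\phi(x^{3n\cdot u})^{\phi(h)}$, observe that $\phi(h)=\overline{\phi}(w)$ in each of the four coset cases, and then compute the finitely many base elements $\phi(x^{3n\cdot u})$ via the Reidemeister--Schreier rewriting. The paper carries out only the $\gamma$, $w\in\alpha^{-1}H$ case and says the rest are analogous, whereas you make the reduction fully explicit (further reducing to $n=1$ and enumerating all sixteen base computations) and also supply the justification---via the monodromy representation $\rho\colon G\to S_4$ of $Y$ and the fact that $\rho(x)$ is a $3$-cycle for $x\in\{\alpha,\beta,\gamma,\delta\}$---that $x^{3n\cdot w}\in\ker\rho\subseteq H$, a point the paper uses tacitly.
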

Schematically this lemma can be summarized by the following directed and labeled graph.  The vertices of the graph represent the base of the expressions in the lemma.  Directed edges indicate how the base changes under an application of $\phi$, and edges are labeled by the new exponent under application of $\phi$.\\

\centerline{\xymatrix{\alpha\ar@/^2pc/[rrr]^{\phibar(w)} &&&\beta\ar@/^.3pc/[lll]^{\phibar(w)}\ar@/^2pc/[lll]^{\beta^{-1}\phibar(w)} &&&{\gamma}\ar@(ul,dl)_{\phibar(w)} \ar@/^1pc/[rrr]^{\phibar(w)} &&&{\delta}\ar@/^1pc/[lll]^{\phibar(w)} } }

\begin{proof}
We prove the third equality for the case when $w\in \alpha^{-1}H$ and the others are proved analogously.  Let $w=\alpha^{-1}h, h\in H$.
\begin{eqnarray*}
\phi((\alpha^{-1}\beta^{-1})^{3n\cdot w}) 
&=&\phi(\alpha(\alpha^{-1}\beta^{-1})^{3n}\alpha^{-1})^{\phi(h)}\\
&=&(\alpha^{-1}\beta^{-1})^{3n\cdot\overline{\phi}(w)}
\end{eqnarray*}
\qed
\end{proof}

To prove the theorem, we show that $\phi$ has a particular kind of contracting property on the exponents of parabolic elements.  A cycle in the graph in Figure \ref{fig:VEmachine} which starts and ends at the vertex labeled 1 and passes through the vertex labeled 3, must immediately continue on to some vertex beside the one labeled 3.  Thus, a new directed labeled graph can be produced to condense the sequence of labels encountered along such paths.  This graph is exhibited in Figure \ref{fig:threeVertexMachine}, and is obtained from the graph in Figure \ref{fig:VEmachine} in the following way.  Delete the vertex labeled 3 and the four edges incident to it.
\begin{figure}[h]
\includegraphics[width=90mm]{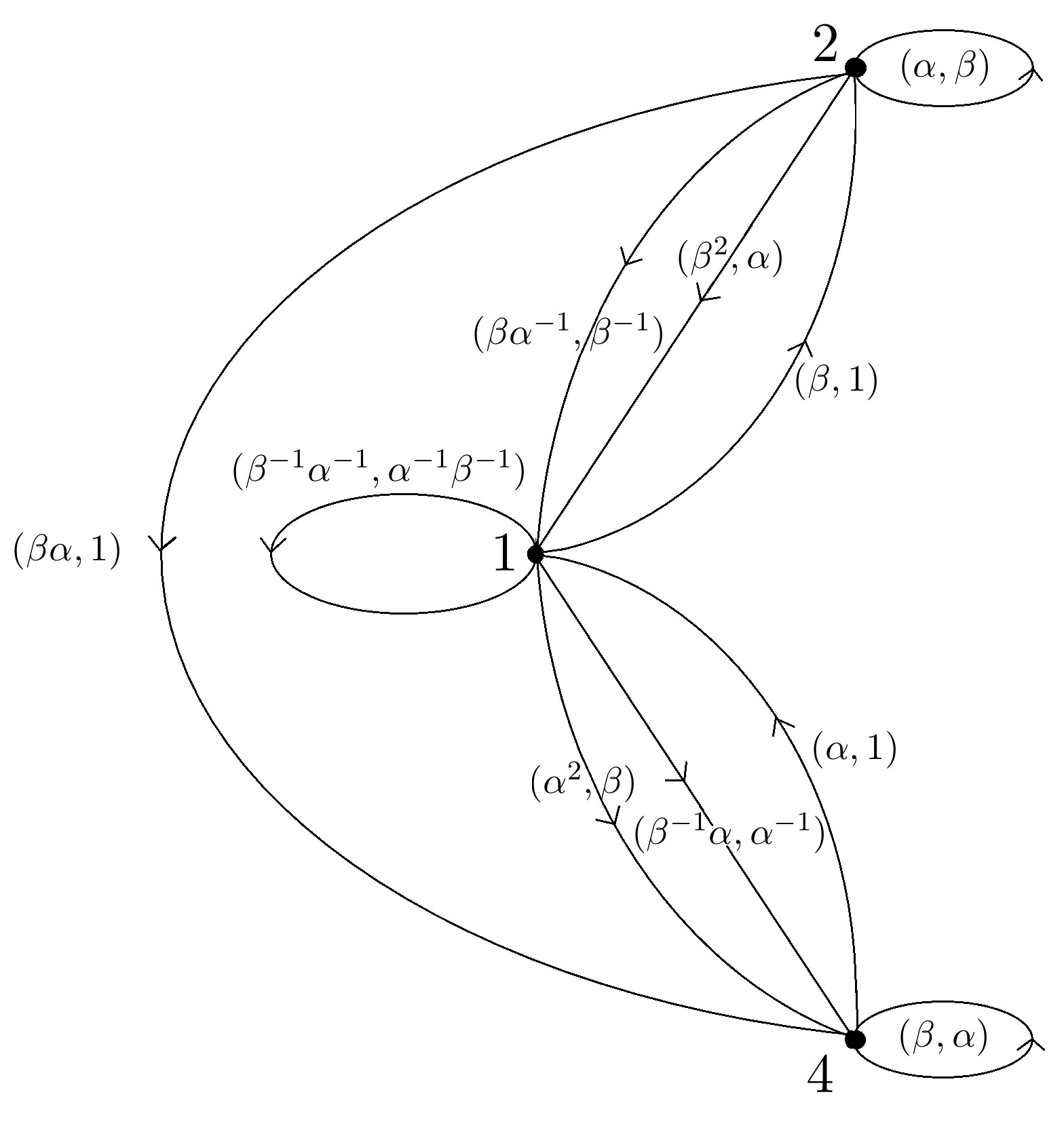}
\caption{New machine for computing $\phi_f$}
\label{fig:threeVertexMachine}
\end{figure}
Add six new edges which correspond to paths $e_1e_2$ in the old graph where $e_1$ and $e_2$ are either edges or reverse edges with the property that $t(e_1)$ is the vertex labelled 3 and $i(e_1)\neq t(e_2)$.  In the new graph, each such edge connects $i(e_1)$ to $t(e_2)$ and is given the label $\ell(e_1e_2)$ (the new edges corresponding to $e_1e_2$ and $e_2^{-1}e_1^{-1}$ are considered redundant and one of them is omitted).  One shows that for elements of $H$, this graph yields the same result as the graph in Figure \ref{fig:VEmachine} by computing its effect on the generators of $H$.  This new graph is the preferred perspective for the proof of the following lemma.

\begin{lemma}\label{lem:decreasingPhi}
Let $w\in H$.  Then either $|\phi(w)|\leq|w|-2$ or $|\phi(w)|=|w|$ in which case $w=(\alpha\beta)^k, k\in\mathbb{Z}$.
\end{lemma}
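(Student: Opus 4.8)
The plan is to read the statement directly off the finite‑state transducer of Figure~\ref{fig:threeVertexMachine}. First I would make the dictionary precise: a reduced word $w=x_{1}x_{2}\cdots x_{n}\in H$, $x_{i}\in S\cup S^{-1}$, determines a closed walk based at the vertex labelled $1$ which consumes the letters $x_{1},\dots,x_{n}$ in order; if $e_{1},\dots,e_{m}$ are the edges traversed and $\ell(e_{1}),\dots,\ell(e_{m})\in G$ their labels, then $\phi(w)$ is the free reduction of the concatenation $\ell(e_{1})\ell(e_{2})\cdots\ell(e_{m})$. Writing $\iota(e_{j})\in\{1,2\}$ for the number of input letters that $e_{j}$ consumes, we have $\sum_{j}\iota(e_{j})=|w|$, so the whole problem becomes bookkeeping of word lengths along closed walks, and the role of the three‑vertex machine (rather than the machine of Figure~\ref{fig:VEmachine}) is that each of its edges consumes at least one input letter, so there is no ``wasted'' motion through the deleted vertex $3$.

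The length estimate then has two parts, both obtained by a finite inspection of the edges of Figure~\ref{fig:threeVertexMachine} (equivalently, from the images $\phi(\beta\alpha\beta^{-1})=\beta$, $\phi(\beta^{2}\alpha^{-1})=\beta^{-1}$, $\phi(\beta^{-1}\alpha^{-1})=\alpha^{-1}\beta^{-1}$, $\phi(\alpha^{3})=\beta$, $\phi(\alpha^{-1}\beta\alpha)=\alpha$). First, $|\ell(e)|\le\iota(e)$ for every edge, which already gives $|\phi(w)|\le\sum_{j}|\ell(e_{j})|\le|w|$. Second, the edges incident to the vertex $1$ are, with the single exception of the loop that spells the input word $\alpha\beta$, strictly \emph{lossy}: $|\ell(e)|\le\iota(e)-1$. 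Now a closed walk based at $1$ that uses any edge out of $1$ other than the $\alpha\beta$‑loop cannot return to $1$ except through a lossy edge into $1$, so it incurs a loss of at least $2$; hence $|\phi(w)|\le|w|-2$ unless the walk uses only the $\alpha\beta$‑loop, that is, unless $w=(\alpha\beta)^{k}$ for some $k\in\mathbb{Z}$. In the exceptional case one checks separately that $\phi(\alpha\beta)$ is cyclically reduced of length $2$, whence $\phi((\alpha\beta)^{k})=(\phi(\alpha\beta))^{k}$ has length $2|k|=|(\alpha\beta)^{k}|$; this pins down the equality case. Walks that revisit the vertex $1$ are handled by decomposing into sub‑loops based at $1$ and applying the previous analysis to each: every sub‑loop is either of the form $(\alpha\beta)^{k_{i}}$ with no loss, or loses at least $2$, and since $(\alpha\beta)^{k_{1}}\cdots(\alpha\beta)^{k_{m}}=(\alpha\beta)^{\sum k_{i}}$ the two alternatives of the Lemma are reproduced.

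The step I expect to be the real obstacle is the finite inspection underlying the second paragraph — in particular establishing that the $\alpha\beta$‑loop is the \emph{only} loss‑free edge configuration at the vertex $1$ (so that, e.g., short loops such as one reading $\alpha\beta^{-1}$ are genuinely lossy), and correctly reconstructing Figure~\ref{fig:threeVertexMachine} from Figure~\ref{fig:VEmachine} together with the ``enter $3$, leave $3$'' condensation. Everything else is routine: the translation to walks, the per‑edge inequality $|\ell(e)|\le\iota(e)$, and the decomposition of general closed walks into sub‑loops.
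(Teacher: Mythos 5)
Your proposal is correct and takes essentially the same route as the paper: the whole content is a word-length accounting along closed walks in the three-vertex transducer of Figure~\ref{fig:threeVertexMachine}, using exactly the two observations you isolate (every edge is non-increasing, and every edge incident to vertex~$1$ other than the $\alpha\beta$-loop loses at least one letter). The paper organizes this by peeling off maximal $(\alpha\beta)^{m}$- and $(\alpha\beta)^{n}$-blocks from the two ends and then noting that the first and last edges of the remaining middle walk each lose one letter; you instead decompose the full closed walk into first-return sub-loops at vertex~$1$ and argue that each sub-loop is either $(\alpha\beta)^{\pm1}$ (no loss) or loses at least two. These two decompositions are equivalent here, and both implicitly depend on the same finite inspection of the machine that you flag as the ``real obstacle'' (in particular, that the $\alpha\beta$-loop is the only self-loop at vertex~$1$, which is what makes the first and last edges of any non-trivial sub-loop distinct and hence forces the loss of two). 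You are slightly more explicit than the paper at the boundary case, noting that $\phi(\alpha\beta)=\beta\alpha$ is cyclically reduced so that $|\phi((\alpha\beta)^{k})|=2|k|$; the paper only says this is ``evident.'' In short: same proof, cosmetically different bookkeeping, with your version a touch more careful about the edge cases.
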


\begin{proof}
Elements of $H$ can be regarded as cycles in the Figure \ref{fig:threeVertexMachine} graph that begin and end at the vertex labeled 1.  Write $w=(\alpha\beta)^mh(\alpha\beta)^n$ where $h\in H$, $m,n\in\mathbb{Z}$ so that $|m|$ and $|n|$ are maximal, and $|w|=2|m|+|h|+2|n|$.  If $h$ is the identity, then evidently $|\phi(w)|=|w|$.  Otherwise $h$ corresponds to a path in Figure \ref{fig:threeVertexMachine} that begins and ends at the vertex labeled 1 whose first and last edges traversed are not $(\beta^{-1}\alpha^{-1},\alpha^{-1}\beta^{-1})$.  Note from Figure \ref{fig:threeVertexMachine} that the labels of the first and last edges that $h$ passes through decrease word length, and all other edges traversed by $h$ have labels that do not increase word length.
\qed
\end{proof}
Define the set of ``bad" elements $\mathcal{B}_{\phi}\subset H$ where $\phi$ is not length-decreasing: 
\[\mathcal{B}_{\phi}=\{(\alpha\beta)^k:k\in\mathbb{Z}\}.\]

\begin{lemma}\label{lem:phiBar}
Let $w\in G$.  Then precisely one of the following is true:
\begin{itemize}
\item $|\overline{\phi}(w)|\leq |w|-1$.
\item $|\overline{\phi}(w)|=|w|$ and $w=(\alpha\beta)^k$, $k\in\mathbb{Z}$.
\item $|\overline{\phi}(w)|=|w|+1$ and $w=\beta(\alpha\beta)^k$, $k\geq 0$.
\end{itemize}
\end{lemma}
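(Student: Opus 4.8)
The plan is to reduce everything to Lemma~\ref{lem:decreasingPhi} using the coset structure of $H$ inside $G=\pi_1(\rs\setminus\Theta,0)$. The definition of $\overline{\phi}$ presupposes that $G$ is the disjoint union of the four left cosets $H$, $\beta^{-1}H$, $\alpha H$, $\alpha^{-1}H$ (this holds because $[G:H]=\deg Y=4$ and these four cosets are pairwise distinct, a fact immediate from the explicit description of $H$ computed earlier). Thus for each $w\in G$ I can write $\overline{\phi}(w)=\phi(\widehat{w})$, where $\widehat{w}=c_w w\in H$ and the ``correcting letter'' $c_w$ equals $1$, $\beta$, $\alpha^{-1}$, or $\alpha$ according as $w$ lies in $H$, $\beta^{-1}H$, $\alpha H$, or $\alpha^{-1}H$ (this is just a restatement of the four cases in the definition of $\overline{\phi}$). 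Consequently $|\widehat{w}|=|w|$ when $w\in H$, while $|\widehat{w}|\in\{|w|-1,\,|w|+1\}$ otherwise; in all cases $|\widehat{w}|\le|w|+1$.

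Next I would apply Lemma~\ref{lem:decreasingPhi} to $\widehat{w}\in H$. If $|\phi(\widehat{w})|\le|\widehat{w}|-2$, then $|\overline{\phi}(w)|=|\phi(\widehat{w})|\le|\widehat{w}|-2\le|w|-1$, so the first alternative holds, and the other two are automatically excluded since each of them forces $|\overline{\phi}(w)|\ge|w|$. In the remaining case Lemma~\ref{lem:decreasingPhi} yields $|\phi(\widehat{w})|=|\widehat{w}|$ together with $\widehat{w}=(\alpha\beta)^k$ for some $k\in\mathbb{Z}$; hence $|\overline{\phi}(w)|=2|k|$ and $w=c_w^{-1}(\alpha\beta)^k$.

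It then remains to compute $|w|=|c_w^{-1}(\alpha\beta)^k|$ by free cancellation and compare with $2|k|$. The reduced word for $(\alpha\beta)^k$ begins with $\alpha$ when $k>0$, with $\beta^{-1}$ when $k<0$, and is empty when $k=0$. So: if $c_w=1$ then $w=(\alpha\beta)^k$ and $|\overline{\phi}(w)|=|w|$, the second alternative; if $c_w=\alpha$ and $k\ge1$, the single cancellation in $\alpha^{-1}(\alpha\beta)^k=\beta(\alpha\beta)^{k-1}$ gives $|w|=2k-1$, whence $|\overline{\phi}(w)|=|w|+1$ and $w=\beta(\alpha\beta)^{k-1}$ with $k-1\ge0$, the third alternative; and in every other subcase (that is, $c_w\in\{\beta,\alpha^{-1}\}$ with arbitrary $k$, or $c_w=\alpha$ with $k\le0$) there is no cancellation between $c_w^{-1}$ and $(\alpha\beta)^k$, so $|w|=2|k|+1$ and $|\overline{\phi}(w)|=|w|-1$, the first alternative. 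Mutual exclusivity is then clear: the three alternatives pin $|\overline{\phi}(w)|$ to $|w|-1$, $|w|$, and $|w|+1$ respectively, and the case analysis above shows that the value $|w|$ (resp.\ $|w|+1$) occurs only for $w=(\alpha\beta)^k$ (resp.\ for $w=\beta(\alpha\beta)^m$ with $m\ge0$).

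The coset bookkeeping and the length computations are routine; the one point that genuinely needs care is noticing that the cancellation $\alpha^{-1}(\alpha\beta)^k=\beta(\alpha\beta)^{k-1}$ for $k\ge1$ is the unique mechanism by which $\overline{\phi}$ can increase word length, which is precisely what produces the third alternative and the constraint $k\ge0$ on its exponent. One should also double-check at the outset that the four left cosets are distinct so that $\overline{\phi}$ (hence the whole argument) is well posed.
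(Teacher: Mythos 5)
Your proof is correct and follows essentially the same strategy as the paper's: both reduce the problem to Lemma~\ref{lem:decreasingPhi} by writing $\overline{\phi}(w)=\phi(c_w w)$ with $c_w$ the single correcting letter, then analyze the free-cancellation between $c_w^{-1}$ and $(\alpha\beta)^k$. The only organizational difference is that the paper first verifies the two exceptional families $\mathcal{B}_\phi$ and $\mathcal{B}_{\overline{\phi}}\setminus\mathcal{B}_\phi$ by the direct computations (\ref{eqn:badset}) and (\ref{eqn:extrabadset}), and then shows that $w\notin\mathcal{B}_{\overline{\phi}}$ forces $|\overline{\phi}(w)|\le|w|-1$, whereas you derive all three alternatives uniformly from Lemma~\ref{lem:decreasingPhi} applied to $\widehat{w}=c_w w$, extracting the characterizations $w=(\alpha\beta)^k$ and $w=\beta(\alpha\beta)^m$, $m\ge 0$, as consequences of where cancellation can occur; this is slightly more self-contained since you need only the length $|\phi((\alpha\beta)^k)|=2|k|$ rather than the explicit images in (\ref{eqn:badset}) and (\ref{eqn:extrabadset}).
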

Using suggestive notation, define $\mathcal{B}_{\overline{\phi}}$ by \[\mathcal{B}_{\phibar}=\mathcal{B}_{\phi}\cup\{\beta(\alpha\beta)^k, k\geq 0\}.\]
\begin{proof}
It is immediately evident from the definition of $\phibar$ and Lemma \ref{lem:decreasingPhi} that for any $w\in G$,
\[|\phibar(w)|\leq |w|+1.\]   
By direct computation we have 
\begin{equation}
\phibar((\alpha\beta)^k)=(\beta\alpha)^k,k\in \mathbb{Z}\\ \label{eqn:badset}
\end{equation}
\begin{equation}
\phibar(\beta(\alpha\beta)^k)=(\beta\alpha)^{k+1},k\geq 0. \label{eqn:extrabadset}
\end{equation}

Next suppose that $w\notin \mathcal{B}_{\phibar}$.  There are four subcases depending on the coset respresentative of $w$.  If $w\in H$, it is clear that $|\phibar(w)|=|\phi(w)|\leq|w|-2$.  To deal with the other three subcases, let $x\in\{\beta,\alpha,\alpha^{-1}\}$ be the inverse of the coset representative of $w$.  We must consider whether the process of appending the inverse of the coset representative and then reducing either lengthens or shortens the word.  First suppose that the length is shortened, i.e. $|xw|=|w|-1$.  Then
\[|\phibar(w)|=|\phi(xw)|\leq|xw|\leq|w|-1\]
If the length increases when $x$ is appended, i.e. $|xw|=|w|+1$, we must consider whether $xw$ is in the bad set or not.  If $xw\notin\mathcal{B_{\phi}}$, it is easy to see that $|\phibar(w)|\leq|w|-1$.  It is actually impossible for $xw\in\mathcal{B_{\phi}}$, because then $x=\alpha$ and so $w=\beta(\alpha\beta)^{k-1}$ contrary to the assumption that $w\notin \mathcal{B}_{\phibar}$.

\qed
\end{proof}

Continuing to the proof of the theorem, it will be helpful to note the following fact which can easily be verified by examination of the Schreier graph: if $k\equiv 0 \text{ mod }3$ then $(\beta\alpha)^k\in H$, if $k\equiv 1$ then $(\beta\alpha)^k\in\beta^{-1}H$, and if $k\equiv 2$ then $(\beta\alpha)^k\in\alpha H$.

\begin{proof}[Proof of Theorem \ref{thm:decreasingconjugators}]
To minimize notation in the following computations, denote by $*$ the presence of some integer that is necessary for each of the following expressions to be in $H$, though the precise values is not significant for our present concerns.  The value of $*$ may even vary within a single equation.  The directed graph from Lemma \ref{lem:phiFormulas} and the contracting property of Lemma \ref{lem:phiBar} make it evident that apart from a single exception, one application of $\phi$ to a suitable power of a parabolic word with exponent $w\notin\extrabadset$ will decrease the length of the exponent.  This single exception occurs when the base is $\beta$ and $w\in\alpha^{-1}H$ in which case the length of the exponent may be preserved; but if the new exponent $\beta^{-1}\phibar(w)$ lies outside of $\extrabadset$, the next iterate of $\phi$ will decrease its length according to Lemma \ref{lem:phiBar}.  The case when the new exponent lies in $\extrabadset$ is the only thing case left to consider.  

When $w\in\extrabadset$, there are four cases that must be considered to show that the exponent lengths decrease under application of $\phi$ to the parabolic element.  For $x\in\{\alpha,\beta\}$, it is shown that $\phi^{\circ 2}(x^{*w})=x^{*w'}$ for some minimal $w'$, and $|w'|<|w|$.  For $x\in\{\gamma,\delta\}$, one can show that $\phi(x^{*w})=\gamma^*$, but this is omitted.\\

\textbf{Case when base is $\alpha$:}  First assume that $w\in\badset$, and consider separately the situation when $k>0$ and $k<0$ for $w=(\alpha\beta)^k$.  If $k<0$, note that 
\[\phi(\alpha^{*w})\stackrel{(1)}{=}\beta^{*\phibar((\alpha\beta)^k)}\stackrel{(\ref{eqn:badset})}{=}\beta^{*(\beta\alpha)^k}\]
and upon taking a second iterate we obtain:
\[\phi^{2}(\alpha^{*w})=\phi(\beta^{*(\beta\alpha)^k})\stackrel{(2)}{=}\alpha^{*\phibar((\beta\alpha)^k)}\]
where $|\phibar((\beta\alpha)^k)|<|w|$ because $(\beta\alpha)^k\notin\mathcal{B}_{\phibar}$.  When $k>0$ one observes that $\alpha^{*(\alpha\beta)^k}=\alpha^{*\beta(\alpha\beta)^{k-1}}$, which means that a simple cancellation puts the exponent in $\extrabadset\setminus\badset$.  

Finally, suppose $w\in\extrabadset\setminus\badset$.  Then
\[\phi(\alpha^{*w})\stackrel{(1)}{=}\beta^{*\phibar(\beta(\alpha\beta)^k)}\stackrel{(\ref{eqn:extrabadset})}{=}\beta^{*(\beta\alpha)^{k+1}}=\beta^{*\alpha(\beta\alpha)^k}\]
and taking a second iterate,
\[\phi^2(\alpha^{*w})=\alpha^{*\phibar(\alpha(\beta\alpha)^k)}\]
where $|\phibar(\alpha(\beta\alpha)^k)|<|w|$ since $\alpha(\beta\alpha)^k\notin\mathcal{B}_{\phibar}$.\\

\textbf{Case when base is $\beta$:}  If $w\in\badset$ we can assume that $k>0$, for otherwise a cancellation occurs.  So assuming $w=(\alpha\beta)^k,k>0$, it follows from Lemma \ref{lem:phiFormulas} that $\phi(\beta^{*w})\stackrel{(\ref{eqn:badset})}{=}\alpha^{*(\beta\alpha)^k}$ and so $\phi^{\circ 2}(\beta^{*w})\stackrel{(2)}{=}\beta^{*\phibar((\beta\alpha)^k)}$, where one sees that $|\phibar((\beta\alpha)^k)|<|w|$ since $(\beta\alpha)^k\notin\extrabadset$.  The second case is when $w\in\extrabadset\setminus\badset$, but one immediately sees that a cancellation with the base occurs that puts the exponent in $\badset$.
\qed
\end{proof}

\end{comment}

\section{Properties of $\sigma_f:\Qbar\rightarrow\Qbar$}
\label{SigmaProps}

In earlier sections, the Weil-Petersson boundary of Teichm\"uller space was identified with the extended rationals $\Qbar$, and the observation was made that the Thurston pullback map extends to the boundary which maps to itself.  Denote by $\sigma_f:\Qbar\rightarrow\Qbar$ the restriction of the extended pullback map to the Weil-Petersson boundary under the identification with $\Qbar$.

The following functional equation is a crucial computational tool, where $\frac{p}{q}\in\Qbar$ and $w\in H$:
\begin{equation}
\label{eqn:functionalEquation}
\sigma_f(\frac{p}{q}.w)=\sigma_f(\frac{p}{q}).\phi(w).
\end{equation}
This equation appears in \cite{BN06} and is a consequence of the following commutative diagram and the fact that $\sigma_f$ and the action of PMCG$(\rs,P_f)$ extend continuously to the Weil-Petersson boundary.  Denote by $T_{p/q}$ the right Dehn twist that fixes the point $\frac{p}{q}$ in the Weil-Petersson boundary and denote by $T_w$ the mapping class that comes from pushing $0$ along the positive direction of $w\in H$.

\centerline{ \xymatrix{(\rs,P_f) \ar[r]^{T_{\sigma_f(p/q)}} \ar[d]_f
&{(\rs,P_f)} \ar[d]^{f} \ar[r]^{T_{\phi(w)}} &(\rs,P_f) \ar[d]^f \\ (\rs,P_f) \ar[r]^{T_{p/q}}
&{(\rs,P_f)} \ar[r]^{T_w} &(\rs,P_f) }}

For future reference it is necessary to state some simple results, the first being that $\sigma_f(\frac{1}{1})=-\frac{1}{1}$.  This is demonstrated as follows:
\[
\sigma_f(\frac{1}{1})=\sigma_f(\frac{1}{1}.\beta^{-1}\alpha^{-1})=\sigma_f(\frac{1}{1}).\phi(\beta^{-1}\alpha^{-1})=\sigma_f(\frac{1}{1}).\alpha^{-1}\beta^{-1}
\]
where $\sigma_f(\frac{1}{1})=-\frac{1}{1}$ because it is the fixed point of the action of $\alpha^{-1}\beta^{-1}$.
Similarly,
\[\sigma_f(-\frac{2}{1})=\frac{1}{0}\]
\[\sigma_f(-\frac{1}{2})=\sigma_f(\frac{1}{2})=\frac{0}{1}\]
\begin{equation}
\label{eqn:oneOverOne}
\sigma_f(\frac{1}{1})=\sigma_f(\frac{1}{3})=-\frac{1}{1}
\end{equation}

\noindent The following consequence of Theorem \ref{thm:decreasingconjugators} describes the global dynamics of $\sigma_f:\Qbar\rightarrow\Qbar$:
\begin{theorem}
\label{theorem:explicitFGA}
Let $\frac{p}{q}\in\Qbar$ be a reduced fraction.  Then under iteration of $\sigma_f$, $\frac{p}{q}$ lands either on the two-cycle $\frac{0}{1}\leftrightarrow\frac{1}{0}$ or on the fixed point $-\frac{1}{1}$.  More precisely, $\frac{p}{q}$ lands on $-\frac{1}{1}$ if and only if $p$ and $q$ are odd. 
\end{theorem}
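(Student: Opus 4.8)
The plan is to transfer the problem, via the functional equation (\ref{eqn:functionalEquation}), from the dynamics of $\sigma_f$ on $\Qbar$ to the dynamics of $\phi$ on parabolic elements, and then to invoke Theorem \ref{thm:decreasingconjugators}. The first ingredient is a dictionary between reduced fractions and parabolic elements together with a parity observation. Every $M\in\Gamma(2)$ is congruent to the identity modulo $2$, so the action of $\mathrm{P}\Gamma(2)$ on $P\mathbb{Z}^2$ preserves the residue class of $\left[\begin{smallmatrix}p\\q\end{smallmatrix}\right]$ modulo $2$; since $\{\frac{0}{1},\frac{1}{0},-\frac{1}{1}\}$ is an orbit transversal and these three points carry the residue classes $(0,1),(1,0),(1,1)$, which exhaust the possibilities for a reduced fraction, a reduced $\frac{p}{q}$ lies in the $\mathrm{P}\Gamma(2)$-orbit of $-\frac{1}{1}$ exactly when $p$ and $q$ are both odd, in the orbit of $\frac{0}{1}$ when $(p,q)\equiv(0,1)$, and in the orbit of $\frac{1}{0}$ when $(p,q)\equiv(1,0)$. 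Writing $\frac{p}{q}=*.w$ accordingly, the stabilizer of $\frac{p}{q}$ is generated by a conjugate of $\alpha$, $\beta$, or $\gamma$ according as $*$ is $\frac{0}{1}$, $\frac{1}{0}$, or $-\frac{1}{1}$; that is, by a power $x^{n\cdot w}$ with base $x\in\{\alpha,\beta,\gamma\}$.

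Next I would record the translation between the two dynamical systems. If $w\in H$ fixes a point $y\in\Qbar$, then (\ref{eqn:functionalEquation}) gives $\sigma_f(y)=\sigma_f(y).\phi(w)$, so $\phi(w)$ fixes $\sigma_f(y)$; since $\phi$ carries parabolic elements to parabolic elements (Lemma \ref{lem:nonperipheral} and the discussion following it), $\phi(w)$ has a unique fixed point, namely $\sigma_f(y)$. Applying this inductively, at each stage replacing the parabolic fixing the current point by a power lying in $H$, which changes neither the point nor, since $\phi$ is a homomorphism on $H$, the eventual outcome, shows that $\sigma_f^{\circ k}(\frac{p}{q})$ is the fixed point of $\phi^{\circ k}(x^{N\cdot w})$ for a suitable $N$. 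By Theorem \ref{thm:decreasingconjugators} (with the auxiliary powers chosen appropriately) there is a $k$ for which $\phi^{\circ k}(x^{N\cdot w})$ is a pure power $x^m$, whose fixed point is the transversal element $*$ associated with the base $x$. Moreover Lemma \ref{lem:phiFormulas} shows that $\phi$ sends a base-$\alpha$ parabolic to a base-$\beta$ one and vice versa, and a base-$\gamma$ or base-$\delta$ parabolic to a base-$\gamma$ or base-$\delta$ one; since $\beta\delta\beta^{-1}=\gamma$, throughout the iteration the current point remains in the orbit of $\{\frac{0}{1},\frac{1}{0}\}$ (equivalently, keeps a mixed residue class) in the first case, and remains in the orbit of $-\frac{1}{1}$ (residue $(1,1)$) in the second.

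It then remains to identify $\sigma_f$ on the transversal. Taking the conjugator trivial in Lemma \ref{lem:phiFormulas} (and a power landing in $H$), $\phi$ sends a power of $\alpha$ to a power of $\beta$, a power of $\beta$ to a power of $\alpha$, and a power of $\gamma$ to a power of $\gamma$; by the fixed-point correspondence above this gives $\sigma_f(\frac{0}{1})=\frac{1}{0}$, $\sigma_f(\frac{1}{0})=\frac{0}{1}$, and $\sigma_f(-\frac{1}{1})=-\frac{1}{1}$. Combining with the previous paragraph: if $p,q$ are both odd, $\frac{p}{q}$ sits in the orbit of $-\frac{1}{1}$, its $\sigma_f$-orbit stays there, and by Theorem \ref{thm:decreasingconjugators} it eventually reaches, hence lands on, the fixed point $-\frac{1}{1}$; if $(p,q)\equiv(0,1)$ or $(1,0)$, its $\sigma_f$-orbit stays in the orbit of $\{\frac{0}{1},\frac{1}{0}\}$, eventually reaches $\frac{0}{1}$ or $\frac{1}{0}$, and so lands on the two-cycle. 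Since the residue class is preserved along the $\sigma_f$-orbit, the point $-\frac{1}{1}$, whose residue is $(1,1)$, can never occur in the orbit of a fraction of mixed parity, which establishes the converse in the ``more precisely'' clause.

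I expect the main obstacle to be not conceptual, since the heart of the matter is already Theorem \ref{thm:decreasingconjugators}, but rather the bookkeeping of the second paragraph: one must carefully justify iterating the single-step functional equation, tracking the auxiliary powers needed to stay inside $H$ and checking that they affect neither the points nor the fixed-point outcomes, and one must confirm that the ``base'' of a parabolic element is a faithful invariant of the $\mathrm{P}\Gamma(2)$-orbit (equivalently, the mod-$2$ residue class) of its fixed point, including the bookkeeping role played by the auxiliary base $\delta$.
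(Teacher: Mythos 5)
Your proposal is correct and follows essentially the same route as the paper's proof: both transfer the question to the dynamics of $\phi$ on parabolic elements via the functional equation, invoke Theorem \ref{thm:decreasingconjugators} to eliminate the conjugator, read off the action on the transversal from $\phi(\alpha^3)=\beta$, $\phi(\beta^3)=\alpha$, $\phi(\gamma^3)=\gamma$, and use the mod-2 invariance of the $\mathrm{P}\Gamma(2)$-action for the parity clause. The only cosmetic difference is that you track bases via Lemma \ref{lem:phiFormulas} where the paper instead packages the same information into the three $\phibar$ identities $\sigma_f(*.w)=*'.\phibar(w)$.
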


\begin{proof}
Recall that points in the Weil-Petersson boundary are encoded by Dehn twists $T_{p/q}$.  It is a consequence of Equation \ref{eqn:functionalEquation} that $\sigma_f(\frac{p}{q})=\frac{p'}{q'}$ if and only if $\frac{p'}{q'}=\text{Fix}(\phi(T_{p/q}^k))$ for some appropriate value of $k\in\mathbb{N}$ since
\[\sigma_f(\frac{p}{q})=\frac{p'}{q'}\iff \sigma_f(\frac{p}{q}.T_{p/q}^k)=\frac{p'}{q'} \iff \sigma_f(\frac{p}{q}).\phi(T_{p/q}^k)=\frac{p'}{q'}. \]
For some choice of $k\in\mathbb{N}$, Theorem \ref{thm:decreasingconjugators} claims that under iteration of $\phi$, $T_{p/q}^k$ lands in one of the three maximal parabolic subgroups $\langle\alpha\rangle,\langle\beta\rangle,$ or $\langle\gamma\rangle$. Then since $\phi(\alpha^3)=\beta,\phi(\beta^3)=\alpha$ and $\phi(\gamma^3)=\gamma$, the mapping 
properties of $\sigma_f$ on the global attractor are easily found.

The continued fraction algorithm presented before gives a way of writing $\frac{p}{q}=*.w$ where $*\in\{\frac{0}{1},\frac{1}{0},\frac{1}{1}\}$ and $w\in G$.  In the case where $*=\frac{1}{1}$ and depending on the coset of $H$ containing $w$, Equations \ref{eqn:functionalEquation} and \ref{eqn:oneOverOne} imply:
\begin{align*}
w &\in H, \text{ then }  \sigma_f(\frac{p}{q})=\sigma_f(\frac{1}{1}.w)=-\frac{1}{1}.\phi(w).\\
w &\in \alpha H, \text{ then } \sigma_f(\frac{p}{q})=\sigma_f(-\frac{1}{1}.\alpha^{-1}w)=-\frac{1}{1}.\phi(\alpha^{-1}w).\\
w &\in \alpha^{-1} H, \text{ then } \sigma_f(\frac{p}{q})=\sigma_f(\frac{1}{3}.\alpha w)=-\frac{1}{1}.\phi(\alpha w).\\
w &\in \beta^{-1} H, \text{ then } \sigma_f(\frac{p}{q})=\sigma_f(-\frac{1}{1}.\beta w)=-\frac{1}{1}.\phi(\beta w).
\end{align*}
Similar computations apply when $*=\frac{0}{1}$ and $*=\frac{1}{0}$, and so the following hold for all $w\in G$:
\[\sigma_f(\frac{1}{0}.w)=\frac{0}{1}.\phibar(w)\]
\[\sigma_f(\frac{0}{1}.w)=\frac{1}{0}.\phibar(w)\]
\[\sigma_f(\frac{1}{1}.w)=-\frac{1}{1}.\phibar(w).\]
Since the action of P$\Gamma(2)$ on $\overline{\mathbb{Q}}$ preserves the parity of numerator and denominator, these equations make evident that when $p$ and $q$ are both odd, then $\sigma_f(\frac{p}{q})$ will have odd numerator and denominator in reduced form.  One can also conclude that when $p$ is odd and $q$ is even, then $\sigma_f(\frac{p}{q})$ will have \textit{even} numerator and \textit{odd} denominator when written in reduced form.  An analogous result holds for $p$ even and $q$ odd.
\qed
\end{proof}

The following sample orbits of $\sigma_f$ demonstrate that a fraction with odd numerator and even denominator can land on either $\frac{0}{1}$ or $\frac{1}{0}$:

\begin{align*}
\frac{203}{356}&\longmapsto -\frac{50}{33}\longmapsto -\frac{13}{6}\longmapsto \frac{6}{1}\longmapsto -\frac{1}{2}\longmapsto \frac{0}{1}\\
\frac{203}{354}&\longmapsto -\frac{28}{19}\longmapsto -\frac{7}{4} \longmapsto -\frac{4}{1}\longmapsto \frac{1}{0}.\\
\end{align*}

Next we show that $\sigma_f$ is surjective and that all fibers are infinite.  Recall that $\alpha$ and $\beta$ generate $G$.  Observe that $\phi$ is a surjective virtual endomorphism since $\phi(\alpha^{-1}\beta\alpha)=\alpha$ and $\phi(\beta\alpha\beta^{-1})=\beta$.  Thus, if $\frac{p}{q}=\frac{0}{1}.w'$ where $w'\in G$, then $\sigma_f(\frac{1}{0}.w)=\frac{p}{q}$ where $w$ is chosen so that $\phi(w)=w'$.  We show that $\sigma_f$ is infinite-to-one in the case of rational numbers of the form $\frac{1}{0}.w'$.  Since $\phi(\beta^2\alpha^2)$ is trivial, one knows from Equation \ref{eqn:functionalEquation} that $\sigma_f(\frac{0}{1}.(\beta^2\alpha^2)^k)=\frac{1}{0}$ for all $k\in\mathbb{Z}$.  Since $\{\frac{0}{1}.(\beta^2\alpha^2)^k:k\in\mathbb{Z}\}$ is infinite, the preimage of $\frac{1}{0}$ is infinite.  By surjectivity of $\phi$, there is a $w$ so that $\phi(w)=w'$ and the infinite set of fractions $\{\frac{0}{1}.(\beta^2\alpha^2)^kw:k\in\mathbb{Z}\}$ all map to $\frac{p}{q}$.

\begin{figure}[h]
\centerline{\includegraphics[width=130mm]{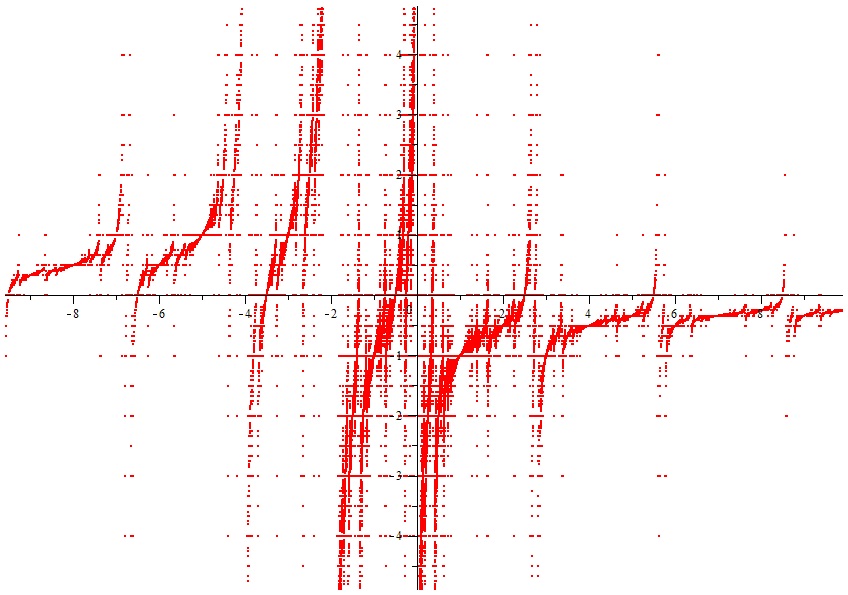}}
\caption{A portion of the plot of the points $(\frac{p}{q},\sigma_f(\frac{p}{q}))$ with max$(|p|,|q|)<1000$}
\label{fig:sigmaPlot}
\end{figure}

Three identities are given that explain some behavior of the graph of $\sigma_f$ in Figure \ref{fig:sigmaPlot}.  There are numerous other identities which can be proven similarly.
Using Equation $\ref{eqn:functionalEquation}$ and the fact that $\phi((\alpha\beta)^n)=(\beta\alpha)^n$,
\[\sigma_f(\frac{n+1}{n})=-\frac{n}{n+1},n>0\]
\begin{equation*}
\sigma_f(\frac{n}{n+1}) = \begin{cases}
-\frac{n-1}{n-2} & n>0 \text{ odd} \\
-\frac{n+1}{n} & n>0 \text{ even.}\\
\end{cases}
\end{equation*}
Another useful identity is proven using the real symmetry of $f$:
\begin{equation}
\label{eqn:inverse}
\sigma_f((\frac{p}{q})^{-1})=(\sigma_f(\frac{p}{q}))^{-1}.
\end{equation}

\bibliographystyle{plain}
\bibliography{Lodge}


\end{document}